\newcommand{\red}[1]{\textcolor{red}{#1}}
\newcommand{\blue}[1]{\textcolor{blue}{#1}}
\newcommand{\at}[1]{}
\newcommand{\email}[1]{{\tt #1}}
\newcommand{\Gr}{{\rm gph\,}}
\newcommand{\dom}{{\rm dom\,}}
\newcommand{\ri}{{\rm ri\,}}
\newcommand{\xb}{\bar x}
\newcommand{\yb}{\bar y}
\newcommand{\pb}{\bar p}
\newcommand{\yba}{\yb^\ast}
\newcommand{\M}{{\cal M}}
\newcommand{\Kb}{\bar K}
\newcommand{\F}{{\mathcal F}}
\newcommand{\R}{\mathbb{R}}
\newcommand{\norm}[1]{\|#1\|}
\newcommand{\dist}[1]{{\rm d}(#1)}
\newcommand{\distSp}[2]{{\rm d}_{#1}(#2)}
\newcommand{\B}{{\cal B}}
\newcommand{\Sp}{{\cal S}}
\newcommand{\mv}{\,\vert\, }
\newcommand{\oo}{o}
\newcommand{\AT}[2]{{\textstyle{#1\atop#2}}}
\newcommand{\skalp}[1]{\langle #1\rangle}
\newcommand{\Limsup}{\mathop{{\rm Lim}\,{\rm sup}}}
\newtheorem{example}{Example}
\newtheorem{remark}{Remark}
\newcommand{\titlerunning}[1]{}
\newcommand{\institute}[1]{}
\begin{document}

\title{On Lipschitzian properties of implicit multifunctions}
\author{Helmut Gfrerer\footnotemark[2]
\and  Ji\v{r}\'{i} V. Outrata\footnotemark[3]
}
\date{}
\maketitle
\renewcommand{\thefootnote}{\fnsymbol{footnote}}
\footnotetext[2]{Institute of Computational Mathematics, Johannes Kepler University Linz,
              A-4040 Linz, Austria,
             \email{helmut.gfrerer@jku.at}}
\footnotetext[3]{Institute of Information Theory and Automation, Academy of Sciences of the Czech Republic, 18208 Prague, Czech Republic, and Centre for
              Informatics and Applied Optimization, Federation University of Australia, POB 663, Ballarat,  Vic 3350, Australia,  \email{outrata@utia.cas.cz}}
\renewcommand{\thefootnote}{\arabic{footnote}}

\begin{abstract}    The paper is devoted to the development of new sufficient conditions
for the calmness and the Aubin property of implicit
multifunctions. As the basic tool one employs the directional limiting
coderivative which, together with the graphical derivative, enable us a
fine analysis of the local behavior of the investigated multifunction
along relevant directions. For verification of the calmness property, in
addition, a new condition has been discovered which parallels the
missing implicit function paradigm and permits us to replace the
original multifunction by a substantially simpler one. Moreover, as an
auxiliary tool, a handy formula for the computation of the directional
limiting coderivative of the normal-cone map with a polyhedral set has
been derived which perfectly matches the framework of \cite{DoRo96}. All
important statements are illustrated by examples.
\end{abstract}
\begin{keywords}
  solution map, calmness, Aubin property, directional limiting coderivative
\end{keywords}
\begin{AMS} 49J53, 90C31, 90C46
\end{AMS}

\pagestyle{myheadings}
\thispagestyle{plain}
\markboth{H. GFRERER AND J. V. OUTRATA}{LIPSCHITZIAN PROPERTIES OF IMPLICIT MULTIFUNCTIONS}

\section{Introduction}
Given a multifunction $M$ of two variables, say $p,x$, define the associated {\em implicit multifunction} $S$ by
\begin{equation}\label{eq-111}
S(p) :=\{x \mv 0 \in M(p,x)\}.
\end{equation}
The aim of this paper is to derive new conditions ensuring the {\em calmness} and the {\em Aubin (Lipschitz-like)} property of $S$ at or around the given reference point $(\pb,\xb)$, respectively. The definitions of these stability properties, together with several other notions, important for this development, are collected in Section 2.1.
Starting with the principal work of Dini \cite{Dini}, there is a large number of works dealing with the classical variant of (\ref{eq-111}), where $M$ is a (mostly smooth) single-valued map. The rapid development of modern variational analysis having started in the seventies has enabled, however, a step by step weakening of the assumptions imposed on $M$  and lead eventually to general multifunctional formulation (\ref{eq-111}). This modern framework has a lot of advantages and allows us to capture, for instance, various types of parameter-dependent constraint and variational systems, cf. \cite[Sections 4.3 and 4.4]{Mo06a}. From the long list of relevant references let us  mention the papers \cite{Ro1}, \cite{Ro2}, \cite{DH},  \cite{DoRo96}, \cite{A}, \cite{K3}, \cite{Thera} where the authors consider various special (frequently arising) classes of multifunctions $M$ and derive conditions  ensuring a Lipschitzian behavior of $S$. The recent monograph \cite{DoRo14} contains then a comprehensive presentation of  currently available results accompanied with a detailed explanation of   the so-called {\em implicit function paradigm}.
This paradigm facilitates substantially the derivation of conditions ensuring various stability properties of $S$ but, as pointed out in \cite[page 200]{DoRo14}, it does not work in the case of calmness. To overcome this hurdle, we have significantly improved an approach from \cite[Lemma 1]{HO}, concerning parameterized constraint systems. Our result (Theorem \ref{ThGenClm}) works  for general multifunctions $M$  and enables us to ensure  the calmness of $S$ via the {\em metric subregularity} of the mapping
\[M_{\pb}(x):=M(\pb,x)\]
at $(\xb,0)$ and a special relaxed calmness condition imposed on a mapping associated with $M$.
Our result has the same structure as \cite[Proposition 2.3]{DoQuZl06} where the authors state a criterion for the Aubin property of $S$ around $(\pb,\xb)$.

To ensure the metric subregularity of $M_ {\pb}$ one can employ one of the various approaches developed in the literature, see, e.g., \cite{IO, KF, K1, K2, Kru15a}. However, the derivative-like objects used in these papers do not possess a decent calculus and so it is difficult to compute them, e.g., in the case of parameterized constraint or variational systems.

The approach used in this paper is related to the techniques from \cite{Gfr11,Gfr13b}. It is based on the notion  of the {\em directional limiting coderivative} introduced in \cite{Gfr13a} (for a slightly different version see \cite{GiMo11}) which provides us with a convenient description of the local behavior of considered multifunctions {\em along} specified directions. Moreover, the directional coderivatives do possess a considerable calculus. The usage of this tool enables us not only to prove the calmness of $S$ (which was our main intention) but, in some cases, to ensure at the same time the non-emptiness of the sets $S(p)$ for $p$ close to $\pb$.

In contrast to the property of calmness  there already exists an efficient characterization of the Aubin property of $S$ in terms of a  derivative-like object associated with $M$. Herewith we mean the {\em Mordukhovich criterion} expressed via the limiting  coderivative, cf. \cite{Mo92}, \cite[Theorem 9.40]{RoWe98} and \cite{Kr82} for a preceding result of this sort.
Further efficient characterizations  can be found, e.g., in  \cite[Chapter 4.2]{DoRo14}.
Nevertheless, in some situations we are not able to compute the limiting coderivative of the implicitly given mapping $S$ precisely, and then resulting sufficient conditions can be far from necessity. This difficulty arises, e.g., when
\[
M(p,x) = G(p,x)+ Q(x),
\]
where $G$ is a continuously differentiable function with a nonsurjective partial Jacobian $\nabla_{p}G$ at the reference point $(\pb,\xb)$. We compute then only an upper estimate of the limiting coderivative of $S$, which makes the resulting condition too rough. Being motivated by this type of problems, we have again employed the directional limiting coderivative to construct a new, substantially weaker (less restrictive) criterion which is able to detect the Aubin property even if the existing  criteria based on the standard limiting coderivative fail. In such cases it suffices, namely, to examine the (local) behavior of $M$ only with respect to directions  for which the graphical derivative of $M$ at $(\pb,\xb,0)$ vanishes.

Both investigated properties, namely the calmness  and the Aubin property of $S$, belong to the basic stability properties of multifunctions. The generalized implicit multifunction model (\ref{eq-111}) is amenable  for a large class of parametric models ranging from constraint systems over variational inequalities up to complicated optimization and equilibrium problems. For all these problems the obtained conditions can be used as an efficient tool of post-optimal analysis.

The plan of the paper is as follows. The next ``preliminary'' section  is divided into three parts. The first one contains the basic definitions, whereas  the second one is devoted to the metric subregularity and its relationship with other notions like the directional metric (sub)regularity and the limiting directional coderivative. In the third part  we present several auxiliary results which are extensively used in the sequel. Some of them are interesting for their own sake and could be used also in a different context. In particular, in Theorem \ref{ThPolyDirLimNormCone} we present an easy-to-apply formula for the directional limiting normal cone to the graph of the normal-cone map associated with a convex polyhedron.  Sections 3 and 4 containing our main results are then devoted to the new criteria of the calmness and the Aubin property of $S$, respectively. The obtained results are illustrated by examples.

Our notation is basically standard.   In an Euclidean space, $\norm{\cdot}$ is the (Euclidean) norm and $\dist{x,\Omega}$ denotes the distance from a point $x$ to the set $\Omega$. Further, $\B_{\R^n}$ and $\Sp_{\R^n}$ denote the closed unit ball and the unit sphere in $\R^n$, respectively, and $\B(x,r):=\{u\mv \norm{u-x}\leq r\}$.
Given a metric space $X$, $\rho_X(\cdot,\cdot)$ stands for the corresponding metric, ${\rm dist}_X$ denotes the respective point-to-set distance function and $\B_X(x,r):=\{u\in X\mv \rho_X(u,x)\leq r\}$. Given the product $X\times Y$ of two (metric, Euclidean) spaces, we use the ``Euclidean'' metric
\[\rho_{X\times Y}((x,y),(x',y')):=\sqrt{(\rho_X(x,x'))^2+(\rho_Y(y,y'))^2}.\]
For a multifunction $F, \Gr F := \{(x,y)|y \in F(x)\}$ is its graph, ${\rm dom\;}F:=\{x\mv F(x)\not=\emptyset\}$ stands for its domain, ${\rm rge\;} F: = \{y \mv \exists x \mbox{ with } y \in F(x)\}$ denotes its range and $F^{-1}$ means the respective inverse mapping. Finally, $K^\circ$ is the (negative) polar cone to a cone $K$ and the notation of the objects from variational analysis together with the respective definitions  is introduced in the next section.

\section{\label{SecPrelim}Preliminaries}

\subsection{Basic notions}

Consider general closed-graph multifunctions $\M:X\rightrightarrows Y$ and $F: Y \rightrightarrows X$, where $X,Y$ are metric spaces.
\begin{definition}\label{DefAubinMetrReg}
{\bf (i)} We say that $F$ has the {\em Aubin property} around $(\yb,\xb)\in\Gr F$, provided there are reals $\kappa \geq 0$ and $\varepsilon > 0$ such that
\[ \distSp{X}{x,F(y)}\leq \kappa \rho_Y(v,y)\quad \mbox{ provided }\quad  \rho_Y(y,\yb)< \varepsilon, \rho_X(x,\xb)<\varepsilon, x \in F(v).
\]

{\bf (ii)}
$\M$   is said to be {\em metrically regular} around $(\xb,\yb)\in\Gr \M$, provided there are $\kappa \geq 0$ and $\varepsilon > 0$ such that
\[
\distSp{X}{x,\M^{-1}(y)} \leq \kappa \distSp{Y}{y,\M(x)}\quad \mbox{ provided }\quad  \rho_X(x,\xb)<\varepsilon, \rho_Y(y,\yb)<\varepsilon.
\]
\end{definition}
It is easy to see that $F$ has the Aubin property around $(\yb,\xb)$ if and only if $F^{-1}$ is metrically regular around $(\xb,\yb)$. The Aubin property has been introduced in \cite{Au 84} (under a different name) and since that time it has been widely used in  variational analysis both as a desired local stability property as well as in various qualification conditions in the nonsmooth calculus. It has also a close connection with the conclusions of theorems of Graves and Lyusternik.

\begin{definition}
In the setting of Definition \ref{DefAubinMetrReg} we say that\\

{\bf (i)}  $F$ is {\em calm} at $(\yb,\xb)$, provided there are reals $\kappa \geq 0$ and $\varepsilon > 0$ such that
\[
\distSp{X}{x,F(\yb)} \leq \kappa \rho_Y(y,\yb)\quad \mbox{ provided }\quad  \rho_Y(y,\yb)<\varepsilon, \rho_X(x,\xb)<\varepsilon\mbox{ and } x \in F(y).
\]

{\bf (ii)} $\M$ is  {\em metrically subregular} at  $(\xb,\yb)$, provided there are $\kappa \geq 0$ and $\varepsilon > 0$ such that
\[
\distSp{X}{x,\M^{-1}(\yb )} \leq \kappa \distSp{Y}{\yb,\M(x)}\quad \mbox{  provided }\quad  \rho_X(x,\xb)<\varepsilon.
\]
\end{definition}
Again, $F$ is calm at $(\yb,\xb)$ if and only if $F^{-1}$ is metrically subregular at $(\xb,\yb)$. Further we observe that the pair of properties from Definition 2 is strictly weaker (less restrictive) than their counterparts from Definition 1 and that the calmness of $F$ at $(\yb,\xb)$ does not entail the non-emptiness of $F(y)$ for $y$ close to $\yb$. As to our knowledge, the metric subregularity has been introduced in \cite{IO79} (under a different name) whereas  the calmness arose for the first time in the context of optimal value functions in \cite{Cl}. Later,  in \cite{Ye96} it has then be generalized to the form arising in Definition 2 (i) and used as a weak constraint qualification  (again under a different name). From the point of view of local post-optimal analysis it is, however, also a valuable property, in particular when one proves in addition that $F(y)\neq \emptyset$ on a neighborhood of $\yb$.

The above defined stability properties will be central in our development. To be able to conduct their thorough analysis in the investigated model, we will make use of several basic notions of the nonsmooth calculus stated below. Since we will be working with them only in finite dimensions, we will present their definitions below  in the finite-dimensional setting.

Let $A$ be a closed set in $\mathbb{R}^{s}$ and $\M$ be now a closed-graph multifunction mapping $\mathbb{R}^{s}$ into (sets of) $\mathbb{R}^{d}$.
\begin{definition}
Assume that $\xb\in A$. Then

{\bf (i)}
\[
T_{A}(\bar{x}):= \Limsup_{t \searrow 0} \frac{A-\bar{x}}{t}
\]
is the {\em tangent (contingent) cone} to $A$ at $\bar{x}$;

{\bf (ii)}
\[
\hat{N}_{A}(\bar{x}):=(T_{A}(\bar{x}))^{\circ}
\]
is the {\em regular  normal cone} to $A$ at $\bar{x}$;

{\bf (iii)}
\[
N_{A}(\bar{x}):= \Limsup_{x \stackrel{A}{\rightarrow}\bar{x}} \hat{N}_{A}(x)
\]
is the {\em limiting  normal cone} to $A$ at $\bar{x}$ and,

{\bf (iv)}
given a direction $u \in \mathbb{R}^{s}$,
\[
N_{A}(\xb;u):=\Limsup\limits_{\scriptstyle t \searrow 0 \atop \scriptstyle u^{\prime}\rightarrow u}
\hat{N}_{A}(\bar{x}+t u^{\prime})
\]
is the {\em directional limiting normal cone} to $A$ at $\bar{x}$ in direction $u$.

\end{definition}

The symbol ``Limsup'' in (i), (iii) and (iv) stands for the outer (upper) set limit in the sense of Painlev\'{e}-Kuratowski, cf. \cite[Chapter 4B]{RoWe98}. If $A$ is convex, then both the regular and the limiting normal cones coincide with the normal cone in the sense of convex analysis. Therefore we will use in this case the notation $N_{A}$.

We say that a tangent $u\in T_A(\xb)$ is {\em derivable} if there exists a mapping $\xi:[0,\varepsilon]\to A$ such that $\xi(0)=\xb$ and $\xi(t)-(\xb+tu)=\oo(t)$, cf. \cite[Definition 6.1]{RoWe98}. This notion arises also in the definition of the tangent cone in \cite{DuMi65}.

The above listed cones enable us to describe the local behavior of multifunctions via various generalized derivatives.

\begin{definition}
Consider a point $(\xb,\yb) \in \Gr \M$. Then

{\bf (i)}
the multifunction $D\M(\xb,\yb):\mathbb{R}^{s} \rightrightarrows\mathbb{R}^{d}$, defined by
\[
D\M(\xb,\yb)(u):= \{v \in \mathbb{R}^{d}| (u,v)\in T_{\Gr \M}(\xb,\yb)\}, u \in \mathbb{R}^{s}
\]
is called the {\em graphical derivative} of $\M$ at $(\xb,\yb)$;

{\bf (ii)} the multifunction $\hat{D}^\ast\M(\xb,\yb): \mathbb{R}^{d}\rightrightarrows\mathbb{R}^{s}$, defined by
\[
\hat{D}^\ast\M(\xb,\yb)(y^\ast):=\{x^\ast\in \mathbb{R}^{s} | (x^\ast,- y^\ast)\in \hat{N}_{\Gr \M}(\xb,\yb)\}, y^\ast\in \mathbb{R}^{d}
\]
is called the {\em regular  coderivative} of $\M$ at $(\xb,\yb)$.

{\bf (iii)} the multifunction $D^\ast \M(\xb ,\yb ): \mathbb{R}^{d}\rightrightarrows\mathbb{R}^{s}$, defined by
\[
D^\ast\M(\xb ,\yb )(y^\ast):=\{x^\ast\in \mathbb{R}^{s} | (x^\ast,- y^\ast)\in N_{\Gr \M}(\xb ,\yb )\}, y^\ast\in \mathbb{R}^{d}
\]
is called the {\em limiting  coderivative} of $\M$ at $(\xb ,\yb )$.

{\bf (iv)} Finally, given a pair of directions $(u,v) \in \mathbb{R}^{s} \times \mathbb{R}^{d}$, the multifunction $D^\ast\M((\xb , \yb ); (u,v)): \mathbb{R}^{d}\rightrightarrows\mathbb{R}^{s}$, defined by
\begin{equation}\label{eq-150}
D^\ast\M((\xb , \yb ); (u,v))(y^\ast):=\{x^\ast \in \mathbb{R}^{s} | (x^\ast,-y^\ast)\in N_{\Gr \M}((\xb , \yb ); (u,v)) \}, y^\ast\in \mathbb{R}^{d}
\end{equation}
is called the {\em directional limiting coderivative} of $\M$ at $(\xb , \yb )$ in direction $(u,v)$.
\end{definition}

For the properties of the cones (i)-(iii) from Definition 3 and generalized derivatives (i)-(iii) from Definition 4 we refer the interested reader to the monographs \cite{RoWe98} and \cite{Mo06a}. Various properties of the directional limiting normal cone and coderivative can be found in \cite{Gfr13a}, \cite{Gfr13b}, \cite{Gfr14a}, \cite{Gfr14b}, \cite{GfrKl15}. In the sequel we will make use of the fact that for a multifunction $\M:\R^s\rightrightarrows\R^d$ and a smooth mapping $h:\R^s\to\R^d$ one has (cf. \cite{Mo94})
\begin{eqnarray}
  \label{EqSumMF_Smooth_TangCone}T_{\Gr (h+\M)}(\xb,\yb)&=&\{(u,\nabla h(\xb)u+v))\mv (u,v)\in T_{\Gr \M}(\xb,\yb-h(\xb))\}\\
  \label{EqSumMF_Smooth_RegNormalCone}\widehat N_{\Gr (h+\M)}(\xb,\yb)&=&\{(x^\ast -\nabla h(\xb)^Ty^\ast,y^\ast)\mv (x^\ast,y^\ast)\in \widehat N_{\Gr \M}(\xb,\yb-h(\xb))\}
\end{eqnarray}
and consequently
\begin{equation}\label{EqSumMF_Smooth_LimCoDeriv}D^\ast(h+\M)((\xb , \yb ); (u,v))(y^\ast)=\nabla h(\xb)^Ty^\ast + D^\ast\M((\xb , \yb-h(\xb)); (u,v-\nabla h(\xb)u))(y^\ast).
\end{equation}

\subsection{Coderivative criteria for metric subregularity and calmness}

In this subsection we will summarize some conditions for metric subregularity given by the first author, which are used in the sequel. In addition, this subsection provides the reader with some geometrical insight essential for the results presented in the last section.

\if{
A weak sufficient condition for metric subregularity is provided by the following theorem.

\begin{theorem}
    \label{ThGenMS}Let $\M:\R^s\rightrightarrows\R^d$ be a multifunction with closed graph. Given $(\xb,\yb)\in\Gr \M$, assume that there do not exist sequences $t_k\searrow 0$, $u_k\in\Sp_{\R^s}$, $v_k\to 0_{\R^d}$,  $u_k^\ast\to 0_{\R^s}$, $v_k^\ast\in\Sp_{\R^s}$ with $v_k\not=0$,
  \begin{equation}\label{EqSOCalm1}
  u_k^\ast\in \hat D^\ast\M(x_k,y_k)(v_k^\ast)
  \end{equation}
  and
  \begin{equation}\label{EqSOCalm2}
  \lim_{k\to\infty}\frac{\skalp{v_k^\ast,y_k-\yb}}{\norm{y_k-\yb}}=1,
  \end{equation}
  where $(x_k,y_k):=(\xb,\yb)+t_k(u_k,v_k)$. Then $\M$ is metrically subregular at $(\xb,\yb)$.
\end{theorem}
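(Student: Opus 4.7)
The plan is a contradiction argument based on Ekeland's variational principle, in the spirit of \cite{Gfr11,Gfr13b}. Assume $\M$ is not metrically subregular at $(\xb,\yb)$. Then for each $k$ one can find $\tilde x_k\to\xb$ together with $\tilde y_k\in\M(\tilde x_k)$ for which
\[
\delta_k:=\dist{\tilde x_k,\M^{-1}(\yb)} > k\|\tilde y_k-\yb\| > 0,
\]
whence $\delta_k\to 0$ and $\|\tilde y_k-\yb\|/\delta_k<1/k\to 0$. The idea is to perturb $(\tilde x_k,\tilde y_k)$ into a point where the convex function $\psi(x,y):=\|y-\yb\|$ is almost minimal on $\Gr\M$ and then extract a regular normal of the desired form.

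First I would apply Ekeland's variational principle to $\psi$ on the closed set $\Gr\M$ with perturbation radius $\lambda_k:=\delta_k/2$, obtaining $(x_k,y_k)\in\Gr\M$ satisfying $\|(x_k,y_k)-(\tilde x_k,\tilde y_k)\|\le\delta_k/2$ and
\[
\psi(x,y)+\mu_k\,\|(x,y)-(x_k,y_k)\|\ \ge\ \psi(x_k,y_k)\quad\forall (x,y)\in\Gr\M,
\]
with $\mu_k:=2\|\tilde y_k-\yb\|/\delta_k\to 0$. The bound $\|x_k-\tilde x_k\|\le\delta_k/2$ keeps $\dist{x_k,\M^{-1}(\yb)}\ge\delta_k/2>0$, whence $y_k\neq\yb$, and also forces $\|y_k-\yb\|/\|x_k-\xb\|\to 0$. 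Fermat's rule for the local minimum of $\psi+\mu_k\|\cdot-(x_k,y_k)\|$ over $\Gr\M$, combined with $\partial\psi(x_k,y_k)=\{(0,(y_k-\yb)/\|y_k-\yb\|)\}$, then produces $(a_k,b_k)\in\mu_k\B_{\R^s\times\R^d}$ with
\[
(-a_k,\,-w_k^\ast)\in\hat N_{\Gr\M}(x_k,y_k),\qquad w_k^\ast:=\frac{y_k-\yb}{\|y_k-\yb\|}+b_k,
\]
i.e.\ $-a_k\in\hat D^\ast\M(x_k,y_k)(w_k^\ast)$, where $\|w_k^\ast\|\to 1$.

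Normalizing via $v_k^\ast:=w_k^\ast/\|w_k^\ast\|\in\Sp_{\R^d}$ and $u_k^\ast:=-a_k/\|w_k^\ast\|\to 0$, and setting $t_k:=\|x_k-\xb\|\searrow 0$, $u_k:=(x_k-\xb)/t_k\in\Sp_{\R^s}$, $v_k:=(y_k-\yb)/t_k\to 0$ with $v_k\neq 0$, positive homogeneity of $\hat D^\ast\M(x_k,y_k)$ preserves $u_k^\ast\in\hat D^\ast\M(x_k,y_k)(v_k^\ast)$ and
\[
\frac{\skalp{v_k^\ast,\,y_k-\yb}}{\|y_k-\yb\|}=\frac{1+\skalp{b_k,\,(y_k-\yb)/\|y_k-\yb\|}}{\|w_k^\ast\|}\longrightarrow 1,
\]
contradicting the assumed nonexistence of such sequences. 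The main technical obstacle is the joint calibration of $\lambda_k$ and the resulting $\mu_k=\|\tilde y_k-\yb\|/\lambda_k$: the radius $\lambda_k$ must be strictly smaller than $\delta_k$ so that $x_k$ remains off $\M^{-1}(\yb)$ (securing both $y_k\neq\yb$ and the vanishing of $\|y_k-\yb\|/\|x_k-\xb\|$), while simultaneously $\mu_k\to 0$ is required to obtain $u_k^\ast\to 0$. The choice $\lambda_k=\delta_k/2$ is admissible precisely because the subregularity-violation inequality gives $\|\tilde y_k-\yb\|<\delta_k/k$.
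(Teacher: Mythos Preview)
Your Ekeland-based contradiction argument is the standard route and is precisely what underlies the paper's proof, which merely cites \cite[Corollary~1, Remark~1]{Gfr14a} with $u=0$; the overall plan and the calibration $\lambda_k=\delta_k/2$ are correct.

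There is, however, a genuine technical gap at the Fermat step. The objective $\psi+\mu_k\|\cdot-(x_k,y_k)\|$ is \emph{not} differentiable at the minimizer $(x_k,y_k)$ (the norm term has its kink there), so the constrained Fermat rule does not place $(-a_k,-w_k^\ast)$ in the \emph{regular} normal cone $\hat N_{\Gr\M}(x_k,y_k)$: via the Lipschitz-plus-lsc sum rule (e.g.\ \cite[Theorem~8.15]{RoWe98}) you only get $(-a_k,-w_k^\ast)\in N_{\Gr\M}(x_k,y_k)$, the limiting normal cone. Equivalently, the Ekeland inequality combined with the smoothness of $\psi$ at $(x_k,y_k)$ yields merely
\[
\limsup_{\Gr\M\ni(x,y)\to(x_k,y_k)}\frac{\langle(0,-(y_k-\yb)/\|y_k-\yb\|),\,(x,y)-(x_k,y_k)\rangle}{\|(x,y)-(x_k,y_k)\|}\le\mu_k,
\]
not $\le 0$. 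Since the hypothesis of the theorem is formulated for $\hat D^\ast\M$, you must insert the usual approximation step: for each $k$ choose $(x_k',y_k')\in\Gr\M$ and a regular normal $(u_k^{\ast\prime},-v_k^{\ast\prime})\in\hat N_{\Gr\M}(x_k',y_k')$ close to $(x_k,y_k)$ and $(-a_k,-w_k^\ast)/\|w_k^\ast\|$, and rebuild $t_k,u_k,v_k$ from $(x_k',y_k')$. Note that the perturbation radius must be taken of order $o(\|y_k-\yb\|)$, not merely $o(t_k)$; otherwise you may lose $y_k'\neq\yb$ and, more subtly, the alignment condition \eqref{EqSOCalm2}, since $(y_k'-\yb)/\|y_k'-\yb\|$ stays close to $(y_k-\yb)/\|y_k-\yb\|$ only under such a bound. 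This fix is routine---it is exactly the device used in the proofs of Theorem~\ref{ThRPCP} and Proposition~\ref{PropDirNormalCone}---but it is not automatic and should be made explicit.
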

\begin{proof}
  Follows from \cite[Corollary 1, Remark 1]{Gfr14a} with $u=0$.
\end{proof}

Note that the main difference between Theorem \ref{ThGenMS} and other sufficient conditions appearing in the literature (see, e.g., \cite{K1,K2,HenJouOut02, IO,LiMo12,KF}) is the inspection of the regular coderivative of $\M$ at points $(x_k,y_k)$ verifying $\lim_{k\to\infty}\norm{y_k-\yb}/\norm{x_k-\xb}=0$. For instance, in \cite{IO,KF} the authors consider the (regular) coderivative at points $(x_k,y_k)$ where $y_k$ belongs to the projection of $\yb$ onto $\M(x_k)$.

However, the assumptions of Theorem \ref{ThGenMS}, like those of other weak sufficient conditions for metric subregularity appearing in the literature, are very difficult to verify. The reason is that the property of metric subregularity is in general unstable under small perturbations, see e.g.\cite{DoRo14}, and this instability is reflected by the sufficient conditions. However, in applications it is important to have workable criteria and for this purpose we have to strengthen the assumptions of Theorem \ref{ThGenMS}.
}\fi
One can find numerous sufficient conditions for metric subregularity in the literature, see, e.g., \cite{K1,K2,Gfr14a,HenJouOut02, IO,Kru15a,LiMo12,KF}). However, these sufficient conditions are often very difficult to verify. The reason is that the property of metric subregularity is in general unstable under small perturbations, see e.g.\cite{DoRo14}, and this instability is reflected by the sufficient conditions. However, in applications it is important to have workable criteria and thus we are looking for some sufficient conditions for metric subregularity which are not as weak as possible but stable with respect to certain perturbations.

Consider the following definition.
\begin{definition}\label{DefLimSet}Let $\M:\R^s\rightrightarrows\R^d$ be a multifunction and let $(\xb,\yb)\in\Gr \M$.
  The {\em limit set critical for metric subregularity}, denoted by ${\rm Cr_0\,}\M(\xb,\yb)$, is the collection of all elements $(v,u^\ast)\in\R^d\times\R^s$ such that there are sequences $t_k\searrow 0$, $(u_k,v_k^\ast)\in\Sp_{\R^s}\times\Sp_{\R^d}$, $(v_k,u_k^\ast)\to(v,u^\ast)$ with $(-u_k^\ast,v_k^\ast)\in\widehat N_{\Gr\M}(\xb+t_ku_k,\yb+t_kv_k)$.
\end{definition}

\if{
Then in \cite[Theorem 3.2]{Gfr11} the following result was shown.
\begin{theorem}\label{ThStableC1MS}Let $\M:\R^s\rightrightarrows\R^d$ be a multifunction with closed graph and let $(\xb,\yb)\in\Gr \M$. If $(0,0)\not\in {\rm Cr_0\,}\M(\xb,\yb)$, then $\M$ is metrically subregular at $(\xb,\yb)$. Conversely, if $(0,0)\in {\rm Cr_0\,}\M(\xb,\yb)$, then there exists a continuously differentiable function $h:\R^s\to\R^d$ with $h(\xb)=0$ and $\nabla h(\xb)=0$ such that $\M+h$ is not metrically subregular at $(\xb,\yb)$.
\end{theorem}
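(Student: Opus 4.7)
\emph{Sufficiency.} I argue by contrapositive. If $\M$ is not metrically subregular at $(\xb,\yb)$, there exist $x_k\to\xb$ and $y_k\in\M(x_k)$ with $\|y_k-\yb\|\leq(1+1/k)\distSp{Y}{\yb,\M(x_k)}\leq(2/k)\|x_k-\xb\|$. Apply Ekeland's variational principle to $(x,y)\mapsto\|y-\yb\|$ on $\Gr\M$ near $(x_k,y_k)$ with $\varepsilon_k:=\|y_k-\yb\|$ and $\lambda_k:=\sqrt{\|y_k-\yb\|\cdot\|x_k-\xb\|}$. The resulting point $(\tilde x_k,\tilde y_k)\in\Gr\M$ is within $\lambda_k$ of $(x_k,y_k)$ and minimizes $\|y-\yb\|+(\varepsilon_k/\lambda_k)\|(x,y)-(\tilde x_k,\tilde y_k)\|$ over $\Gr\M$. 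A fuzzy sum rule (or, equivalently, the Borwein--Preiss smooth variational principle in place of Ekeland) converts this minimum into a regular normal $(-\xi_k^\ast,-\eta_k^\ast)\in\widehat N_{\Gr\M}(\tilde x_k,\tilde y_k)$ with $\|\xi_k^\ast\|\leq\varepsilon_k/\lambda_k$ and $\eta_k^\ast=(\tilde y_k-\yb)/\|\tilde y_k-\yb\|+O(\varepsilon_k/\lambda_k)$. Normalize $\eta_k^\ast$ (whose norm tends to $1$) and set $t_k:=\|\tilde x_k-\xb\|$, $u_k:=(\tilde x_k-\xb)/t_k$, $v_k:=(\tilde y_k-\yb)/t_k$. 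The scale choice $\lambda_k=\sqrt{\varepsilon_k\|x_k-\xb\|}$ yields both $\varepsilon_k/\lambda_k\to 0$ (so $u_k^\ast\to 0$) and $\lambda_k/\|x_k-\xb\|\to 0$ (so $v_k\to 0$), exhibiting $(0,0)\in{\rm Cr_0\,}\M(\xb,\yb)$.

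\emph{Necessity.} Conversely, fix witnesses $t_k\searrow 0$, $u_k\in\Sp_{\R^s}$, $v_k^\ast\in\Sp_{\R^d}$, $(v_k,u_k^\ast)\to(0,0)$ with $(-u_k^\ast,v_k^\ast)\in\widehat N_{\Gr\M}(x_k,y_k)$ at $(x_k,y_k):=(\xb+t_ku_k,\yb+t_kv_k)$. Pass to a subsequence so that $t_{k+1}\leq t_k/4$ (so the balls $B(x_k,t_k/4)$ are pairwise disjoint) and $\|u_k^\ast\|+\|v_k\|\leq 1/k$. Let $r_k>0$ be a radius on which the regular-normal inequality $\langle v_k^\ast,y-y_k\rangle-\langle u_k^\ast,x-x_k\rangle\leq (1/k)\|(x,y)-(x_k,y_k)\|$ holds for $(x,y)\in\Gr\M\cap B((x_k,y_k),r_k)$. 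Choose $\delta_k:=\min\{r_k/2,\,t_k(\|u_k^\ast\|+1/k)/32\}$ (hence $\delta_k=o(t_k)$), pick $C^\infty$ bumps $\psi_k$ supported in $B(x_k,t_k/8)$, identically $1$ on $B(x_k,t_k/16)$, with $\|\nabla\psi_k\|_\infty\leq C/t_k$, and set $h_k:=-t_kv_k-\delta_k v_k^\ast$ (of norm $o(t_k)$) and $h(x):=\sum_k\psi_k(x)h_k$. Disjointness of supports together with $\|h_k\|/t_k\to 0$ ensure $h\in C^1(\R^s)$ with $h(\xb)=0$ and $\nabla h(\xb)=0$. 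To verify failure of metric subregularity of $\M+h$ at $(\xb,\yb)$, note $h(x_k)=h_k$, so $\yb-h(x_k)=y_k+\delta_k v_k^\ast$ and, since $y_k\in\M(x_k)$, $\distSp{Y}{\yb,(\M+h)(x_k)}\leq\delta_k$. Substituting $y=y_k+\delta_k v_k^\ast$ into the regular-normal inequality yields $\delta_k(1-1/k)\leq(\|u_k^\ast\|+1/k)\|x-x_k\|$, so $y_k+\delta_k v_k^\ast\notin\M(x)$ for $\|x-x_k\|<\delta_k/(2(\|u_k^\ast\|+1/k))\leq t_k/64$; this ball lies inside the plateau where $h\equiv h_k$, so $\yb\notin(\M+h)(x)$ there. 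Therefore $\distSp{X}{x_k,(\M+h)^{-1}(\yb)}\geq\delta_k/(2(\|u_k^\ast\|+1/k))$, and the ratio $\distSp{X}{x_k,(\M+h)^{-1}(\yb)}/\distSp{Y}{\yb,(\M+h)(x_k)}\geq 1/(2(\|u_k^\ast\|+1/k))\to\infty$, proving that $\M+h$ is not metrically subregular.

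\emph{Main obstacle.} The delicate technical point is coordinating three scales in the necessity direction: the possibly very small validity radius $r_k$ of the regular-normal estimate, the bump-support scale of order $t_k$, and the forbidden-ball radius of order $\delta_k/(\|u_k^\ast\|+1/k)$. The choice $\delta_k:=\min(r_k/2,\,t_k(\|u_k^\ast\|+1/k)/32)$ is tuned precisely so that the forbidden ball fits simultaneously inside both the validity region and the plateau $\{\psi_k=1\}$, while maintaining $\|h_k\|=o(t_k)$ for the $C^1$ gluing and $\nabla h(\xb)=0$. The analogous subtlety in the sufficiency direction is extracting a \emph{regular} (rather than limiting) normal from Ekeland's principle, bypassed by the fuzzy calculus or a smooth variational principle.
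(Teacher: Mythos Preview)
The paper does not supply its own proof of this statement; it is quoted (inside a commented-out block) from \cite[Theorem 3.2]{Gfr11} and invoked as a black box in the proof of Theorem~\ref{ThStabMS}. Your proposal is therefore an independent attempt. The overall architecture---Ekeland/Borwein--Preiss for the sufficiency direction and a bump-function perturbation built on the regular-normal witnesses for the converse---is the standard and correct one, and your sufficiency argument is sound (the scale $\lambda_k=\sqrt{\varepsilon_k\|x_k-\xb\|}$ indeed forces both $u_k^\ast\to 0$ and $v_k\to 0$, and the passage to a regular normal via fuzzy calculus or a smooth variational principle is routine).

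There is, however, a genuine gap in the necessity direction, exactly at the ``coordination of three scales'' you flag. You assert that the forbidden ball of radius $\delta_k/(2(\|u_k^\ast\|+1/k))$ lies inside the validity region of the regular-normal estimate, but your choice $\delta_k=\min\{r_k/2,\,t_k(\|u_k^\ast\|+1/k)/32\}$ does not ensure this. Suppose $r_k$ is the binding term, so $\delta_k=r_k/2$; then the forbidden radius is $r_k/(4(\|u_k^\ast\|+1/k))$, which for large $k$ exceeds $r_k$. For such $x$ the point $(x,y_k+\delta_k v_k^\ast)$ may lie outside $B((x_k,y_k),r_k)$, the regular-normal inequality does not apply, and you cannot exclude $(x,y_k+\delta_k v_k^\ast)\in\Gr\M$. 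The radius on which exclusion is actually established is only $\min\{\delta_k(1-1/k)/(\|u_k^\ast\|+1/k),\,\sqrt{r_k^2-\delta_k^2}\}$, and with $\delta_k=r_k/2$ the second term gives at most $r_k\sqrt{3}/2$, so the ratio $\distSp{X}{x_k,(\M+h)^{-1}(\yb)}/\distSp{Y}{\yb,(\M+h)(x_k)}$ is bounded by $\sqrt{3}$ rather than diverging. The repair is easy: shrink $r_k$ so that $r_k\leq t_k/8$ and take instead $\delta_k:=r_k(\|u_k^\ast\|+1/k)/4$. Then $\delta_k=o(t_k)$ still holds, the forbidden radius $r_k(1-1/k)/4$ fits inside both $B((x_k,y_k),r_k)$ and the plateau $B(x_k,t_k/16)$, and the ratio is $(1-1/k)/(\|u_k^\ast\|+1/k)\to\infty$ as required.
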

}\fi

In \cite[Theorem 3.2]{Gfr11} it was shown that the condition $(0,0)\not\in {\rm Cr_0\,}\M(\xb,\yb)$ is sufficient for metric subregularity of $\M$ at $(\xb,\yb)$. We will now show that this criterion for metric subregularity  is stable under small $C^1$ perturbations. Moreover, we reformulate it in terms of directional limiting coderivatives to obtain the condition \eqref{EqEquivCrDirLimCoDer} below which is very congenial to the  Mordukhovich criterion for metric regularity, cf. \cite{Mo92}, \cite[Theorem 9.40]{RoWe98}.

\begin{remark}In \cite[Theorem 3.2]{Gfr11} an infinite-dimensional setting was considered and therefore a further limit set critical for metric subregularity denoted by ${\rm Cr\,}\M(\xb,\yb)$ appears. However, in finite dimensions both limit sets coincide: ${\rm Cr\,}\M(\xb,\yb)={\rm Cr_0\,}\M(\xb,\yb)$, cf. \cite[p.1450]{Gfr11}.
\end{remark}
\if{
\begin{remark}
  Note that the condition $(0,0)\not\in {\rm Cr_0\,}\M(\xb,\yb)$ differs from the assumptions of Theorem \ref{ThGenMS} by the absence of the requirements $v_k\not=0$ and \eqref{EqSOCalm2}.
\end{remark}

It is an easy consequence of  Theorem \ref{ThStableC1MS} that the condition $(0,0)\not\in {\rm Cr_0\,}\M(\xb,\yb)$ characterizes stability of metric subregularity under small $C^1$ perturbations.
}\fi
\begin{theorem}
  \label{ThStabMS}Let $\M:\R^s\rightrightarrows\R^d$ be a multifunction and let $(\xb,\yb)\in\Gr \M$. Then the following statements are equivalent:

  {\bf (i)} $(0,0)\not\in {\rm Cr_0\,}\M(\xb,\yb)$.

  {\bf (ii)} There exists $r>0$ such that for every $h\in C^1(\R^s,\R^d)$ with $h(\xb)=0$ and $\norm{\nabla h(\xb)}\leq r$ the mapping $\M+h$ is metrically subregular at $(\xb,\yb)$.

  {\bf (iii)}
  \begin{equation}\label{EqEquivCrDirLimCoDer}
 \forall 0\not=u\in\R^s:\ 0\in D^\ast\M((\xb,\yb);(u,0))(v^\ast)\Rightarrow  v^\ast=0.
\end{equation}
\end{theorem}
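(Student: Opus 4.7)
The plan is to prove (i)$\Leftrightarrow$(iii) by direct manipulation of the defining sequences, and (i)$\Leftrightarrow$(ii) by invoking \cite[Theorem 3.2]{Gfr11}, which identifies $(0,0)\notin{\rm Cr_0\,}\M(\xb,\yb)$ as exactly its sufficient criterion for metric subregularity. The implication (ii)$\Rightarrow$(i) is then immediate: if (i) fails, the converse part of \cite[Theorem 3.2]{Gfr11} furnishes $h\in C^1(\R^s,\R^d)$ with $h(\xb)=0$ and $\nabla h(\xb)=0$ such that $\M+h$ is not metrically subregular at $(\xb,\yb)$; since $\norm{\nabla h(\xb)}=0\leq r$ for every $r>0$, this refutes (ii).

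For (i)$\Leftrightarrow$(iii) I would argue contrapositively in both directions. If $(0,0)\in{\rm Cr_0\,}\M(\xb,\yb)$, compactness lets me extract subsequences so that $u_k\to u\in\Sp_{\R^s}$ and $v_k^\ast\to v^\ast\in\Sp_{\R^d}$; combined with $v_k\to 0$, $u_k^\ast\to 0$ and $t_k\searrow 0$, the regular-normal-cone inclusions give, via the definition of the directional limiting normal cone, $(0,v^\ast)\in N_{\Gr\M}((\xb,\yb);(u,0))$, whence by \eqref{eq-150} one has $0\in D^\ast\M((\xb,\yb);(u,0))(-v^\ast)$ with $u\neq 0$ and $-v^\ast\neq 0$, violating (iii). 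Conversely, sequences realizing a violation of (iii) can be rescaled by $1/\norm{u_k}$ in the primal variables and by $1/\norm{y_k^\ast}$ in the dual variables --- both quantities bounded away from $0$ for large $k$ since $u$ and $v^\ast$ are nonzero --- so as to place them on the required unit spheres without changing the base point $(\xb+t_ku_k,\yb+t_kv_k)$; checking that the zero limits $v_k\to 0$ and $u_k^\ast\to 0$ are preserved then gives $(0,0)\in{\rm Cr_0\,}\M(\xb,\yb)$.

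For (i)$\Rightarrow$(ii) I argue by contradiction. If (ii) fails, for each $n\geq 1$ I pick $h_n\in C^1(\R^s,\R^d)$ with $h_n(\xb)=0$, $\norm{\nabla h_n(\xb)}\leq 1/n$, such that $\M+h_n$ is not metrically subregular at $(\xb,\yb)$; the sufficient direction of \cite[Theorem 3.2]{Gfr11} then supplies, for each $n$, sequences realizing $(0,0)\in{\rm Cr_0\,}(\M+h_n)(\xb,\yb)$. Rewriting the associated regular-normal-cone inclusions via \eqref{EqSumMF_Smooth_RegNormalCone} shifts the second base-point coordinate by $-h_n(\xb+t_k^{(n)}u_k^{(n)})$ and adds $\nabla h_n(\xb+t_k^{(n)}u_k^{(n)})^T v_k^{\ast(n)}$ to the primal dual variable. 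Exploiting continuity of $\nabla h_n$ at $\xb$ together with the mean value theorem, once $n$ is fixed I can choose the inner index $k(n)$ so large that $\norm{\nabla h_n(\xb+t_{k(n)}^{(n)}u_{k(n)}^{(n)})}\leq 2/n$ and $\norm{h_n(\xb+t_{k(n)}^{(n)}u_{k(n)}^{(n)})}\leq (2/n)\,t_{k(n)}^{(n)}$; demanding additionally that $t_{k(n)}^{(n)}$, $\norm{v_{k(n)}^{(n)}}$ and $\norm{u_{k(n)}^{\ast(n)}}$ lie below $1/n$ yields a diagonal sequence witnessing $(0,0)\in{\rm Cr_0\,}\M(\xb,\yb)$, contradicting (i). The \emph{main obstacle} is precisely this diagonal bookkeeping: the perturbations $h_n$ are $C^1$ individually but not uniformly in $n$, so the off-reference bound on $\nabla h_n$ must be secured after $n$ has been fixed, by letting $k(n)$ grow fast enough; once the order of quantifiers is right, the remaining verifications are routine.
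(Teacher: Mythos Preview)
Your proposal is correct and follows essentially the same approach as the paper: both argue (ii)$\Rightarrow$(i) directly from the converse part of \cite[Theorem 3.2]{Gfr11}, both prove (i)$\Rightarrow$(ii) by contraposition via a diagonal extraction from the ${\rm Cr_0}(\M+h_n)$ sequences translated through \eqref{EqSumMF_Smooth_RegNormalCone}, and both treat (i)$\Leftrightarrow$(iii) as a routine consequence of the definitions together with compactness of $\Sp_{\R^s}\times\Sp_{\R^d}$. Your write-up is in fact more explicit than the paper's on (i)$\Leftrightarrow$(iii) (the paper dispatches it in one sentence) and organizes the $h_n$-estimates slightly differently --- you bound $\norm{h_n(\xb+t u)}$ via the mean value theorem whereas the paper splits off the first-order Taylor term explicitly --- but these are cosmetic differences, not substantive ones.
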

\begin{proof}
  By the second part of \cite[Theorem 3.2]{Gfr11} we have $\neg {\rm(i)}\Rightarrow \neg {\rm(ii)}$ and therefore ${\rm(ii)}\Rightarrow{\rm(i)}$ follows. We prove the reverse implication by contraposition. Assume that (ii) does not hold, i.e., there exists a sequence of functions $h_j\in C^1(\R^s,\R^d)$ with $h_j(\xb)=0$ and $\norm{\nabla h_j(\xb)}\leq 1/j$ such that $\M+h_j$ is not metrically subregular at $(\xb,\yb)$. By \cite[Theorem 3.2]{Gfr11},  for every $j$ we have $(0,0)\in{\rm Cr_0}(\M+h_j)(\xb,\yb)$ and hence for every $j$ there exist sequences $t_{jk}\searrow 0$, $(u_{jk},v_{jk}^\ast)\in\Sp_{\R^s}\times\Sp_{\R^d}$, $(v_{jk},u_{jk}^\ast)\to(0,0)$ with $(-u_{jk}^\ast,v_{jk}^\ast)\in\widehat N_{\Gr(\M+h)}(\xb+t_{jk}u_{jk},\yb+t_{jk}v_{jk})$. For every $j$ we can find some index $k(j)$ such that $t_{jk(j)}\leq 1/j$, $\norm{v_{jk(j)}}\leq 1/j$, $\norm{u_{jk(j)}^\ast}\leq 1/j$, $\norm{\nabla h_j(\xb+t_{jk(j)}u_{jk(j)})-\nabla h_j(\xb)}\leq 1/j$ and $\norm{h_j(\xb+t_{jk(j)}u_{jk(j)})-h_j(\xb)-t_{jk(j)}\nabla h_j(\xb)u_{jk(j)}}\leq t_{jk(j)}/j$. Putting $t_j:=t_{jk(j)}$, $u_j:=u_{jk(j)}$, $v_j:=v_{jk(j)}-h(\xb+t_ju_j)/t_j$, $v_j^\ast:=v_{jk(j)}^\ast$ and $u_j^\ast=u_{jk(j)}^\ast-\nabla h_j(\xb+t_ju_j)^Tv_j^\ast$ we obtain $(u_j,v_j^\ast)\in\Sp_{\R^s}\times\Sp_{\R^d}$, $\norm{u_j^\ast}\leq \norm{u_{jk(j)}^\ast}+\norm{\nabla h_j(\xb+t_ju_j)}\norm{v_j^\ast}\leq 3/j$ and
  \[\norm{v_j}\leq \norm{v_{jk(j)}}+\norm{h(\xb+t_ju_j)}/t_j\leq 1/j+ (t_j/j+\norm{h_j(\xb)+ t_j\nabla h_j(\xb)u_j})/t_j\leq 3/j.\]
  Since $(-u_{jk(j)}^\ast, v_j^\ast)\in \widehat N_{\Gr(\M+h)}(\xb+t_ju_j,\yb+t_jv_{jk(j)})$, by \eqref{EqSumMF_Smooth_RegNormalCone} we have
  \begin{eqnarray*}(-u_j^\ast,v_j^\ast)&=&(-u_{jk(j)}^\ast+\nabla h_j(\xb+t_ju_j)^Tv_j^\ast, v_j^\ast)\\
  &\in& \widehat N_{\Gr\M}(\xb+t_ju_j, \yb+t_jv_{jk(j)}-h_j(\xb+t_ju_j))=\widehat N_{\Gr\M}(\xb+t_ju_j,\yb+t_jv_j)\end{eqnarray*}
  and $(0,0)\in {\rm Cr_0\,}\M(\xb,\yb)$ follows. Hence ${\rm(i)}\Rightarrow{\rm(ii)}$ also holds and the equivalence between ${\rm (i)}$ and ${\rm (ii)}$ is established. The equivalence between ${\rm (i)}$ and ${\rm (iii)}$ follows from the definitions and the fact, that any sequence $(u_k,v_k^\ast)\in\Sp_{\R^s}\times\Sp_{\R^d}$ has a convergent subsequence.
\end{proof}

One easily concludes from Theorem \ref{ThStabMS}
that condition \eqref{EqEquivCrDirLimCoDer} implies the metric subregularity of $\M$ at $(\xb,\yb)$. In the sequel we will call this condition {\em first-order sufficient condition for metric subregularity} and use the acronym FOSCMS.

Conditions \eqref{EqEquivCrDirLimCoDer} examines the limiting coderivative only in directions of the form $(u,0)$, $u\not=0$ and therefore we have to look into normals to the graph of $\M$ at points $(x,y)$ with $\norm{y-\yb}=\oo(\norm{x-\xb})$.
\begin{remark}
  Note that condition \eqref{EqEquivCrDirLimCoDer} is in particular fulfilled, if either there is no direction $u\not=0$ with $0\in D\M(\xb,\yb)(u)$ or
  \[0\in D^\ast\M(\xb,\yb)(v^\ast)\Rightarrow  v^\ast=0.\]
  The former of these two special cases is equivalent to the so-called {\em strong metric subregularity}, cf. \cite[Theorem 4E.1]{DoRo14}, whereas the latter is equivalent to metric regularity of $\M$ by the Mordukhovich criterion, cf. \cite{Mo92},\cite[Theorem 9.40]{RoWe98}. However, the condition \eqref{EqEquivCrDirLimCoDer} is by far not restricted to these two special cases. A simple multifunction $\M$,
where condition \eqref{EqEquivCrDirLimCoDer} is fulfilled but the respective $S$ is neither strong metrically subregular nor
metrically regular, can be found in Example \ref{ExMetrSubreg}.
\end{remark}

To get more insight into the equivalences of Theorem \ref{ThStabMS}, consider the following definitions, cf.\cite{Gfr13a}.
\begin{definition}
Let $\M:\R^s\rightrightarrows\R^d$ be a multifunction and let $(\xb,\yb)\in\Gr\M$.

{\bf (i)} Given $(u,v)\in \R^s\times\R^d$, $\M$  is called {\em
 metrically regular in direction $(u,v)$} at
$(\xb,\yb)$, provided there exist  positive reals
$\delta>0$  and  $\kappa>0$ such that
\begin{equation}
\label{EqDirReg} \dist{\xb+tu',\M^{-1}(\yb+tv')}\leq \kappa \dist{\yb+tv',\M(\xb+tu')}
\end{equation}
holds for all $t\in[0,\delta]$ and all $(u',v')\in \B((u,v),\delta)$ with
$\dist{(\xb+tu',\yb+tv'),\Gr \M}\leq \delta t$.

{\bf (ii)} For given $u\in \R^s$, $\M$ is said to be
{\em metrically subregular in direction $u$} at $(\xb,\yb)$, if there are  positive reals $\delta>0$  and $\kappa'>0$
such that
\begin{equation}
\label{EqDirSubReg} \dist{\xb+tu',\M^{-1}(\yb)}\leq \kappa' \dist{\yb,\M(\xb+tu')}
\end{equation}
holds for all $t\in[0,\delta]$ and $u'\in \B(u,\delta)$.

The infimum of  $\kappa$ and $\kappa'$, respectively, over all such combinations of $\delta$, $\kappa$ and $\kappa'$, respectively, is called the modulus of the respective property.
\end{definition}

Note that these definitions imply that a multifunction $\M$ is automatically metrically regular in direction $(u,v)$ when  $(u,v)\not\in T_{\Gr\M}(\xb,\yb)$ and that $\M$ is metrically subregular in direction $u$ when  $(u,0)\not\in T_{\Gr\M}(\xb,\yb)$.

Metric subregularity in direction $u$ was introduced by Penot \cite{Pen98} (under the name  directional metric regularity). The above definition  of directional metric regularity is due to \cite{Gfr13a}. Note that in \cite{ArutAvIzm07,ArutIzm06} Arutyunov et al. have introduced and studied another notion of directional metric regularity which is an extension of an earlier notion used by \cite{BonSh00}.
\begin{lemma}
  \label{LemBasicPropDirMetr}
Let $\M:\R^s\rightrightarrows\R^d$ be a multifunction and let $(\xb,\yb)\in\Gr\M$.

{\bf (i)} Consider the following statements:\par
  \hskip\parindent(a) $\M$ is metrically regular  around $(\xb,\yb)$.\par
  \hskip\parindent(b) $\M$ is metrically regular  in direction $(0,0)$ at $(\xb,\yb)$.\par
  \hskip\parindent(c) $\M$ is metrically regular in every direction $(u,v)\not=(0,0)$.\par
Then $(a)\Leftrightarrow(b)\Rightarrow(c)$.\smallskip

{\bf (ii)} Consider the following statements:\par
  \hskip\parindent(a) $\M$ is metrically subregular  at $(\xb,\yb)$.\par
  \hskip\parindent(b) $\M$ is metrically subregular  in direction $(0,0)$ at $(\xb,\yb)$.\par
  \hskip\parindent(c) $\M$ is metrically subregular in every direction $u\not=0$.\par
Then $(a)\Leftrightarrow(b)\Leftrightarrow(c)$.\smallskip

{\bf (iii)} If $\M$ is metrically regular in direction $(u,0)$ at $(\xb,\yb)$, then it is also metrically subregular in direction $u$.
\end{lemma}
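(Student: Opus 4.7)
The plan is to handle the three parts separately. Parts (i) and (ii) reduce to two kinds of arguments: a trivial direction obtained by taking $(x,y)=(\xb+tu',\yb+tv')$ and a converse direction obtained by rescaling a sequence that would witness failure of the classical (sub)regularity into the directional setting. Part (iii) is a two-case argument that trades the missing ``proximity-to-graph'' hypothesis for a trivial Lipschitz bound.

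For (i), the equivalence $(a)\Leftrightarrow(b)$ is the substantive point. The direction $(a)\Rightarrow(b)$ is immediate: if $t\in[0,\delta]$ and $(u',v')\in\B((0,0),\delta)$ then $(\xb+tu',\yb+tv')$ lies in the $\varepsilon$-neighborhood used by ordinary metric regularity as soon as $\delta^2<\varepsilon$. For $(b)\Rightarrow(a)$, assume the classical property fails and pick $(x_k,y_k)\to(\xb,\yb)$ with $d(x_k,\M^{-1}(y_k))>k\,d(y_k,\M(x_k))$. I set $t_k:=\|(x_k-\xb,y_k-\yb)\|/\delta$ and $(u_k',v_k'):=(x_k-\xb,y_k-\yb)/t_k$; the proximity hypothesis $d((x_k,y_k),\Gr\M)\leq\delta t_k$ is automatic because $(\xb,\yb)\in\Gr\M$. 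Applying directional regularity at direction $(0,0)$ yields a contradiction. For $(b)\Rightarrow(c)$ with $(u,v)\neq(0,0)$, I rescale by $\tilde t:=t\|(u',v')\|/\delta_0$ and $(\tilde u',\tilde v'):=\delta_0(u',v')/\|(u',v')\|$, where $\delta_0,\kappa_0$ are the constants for direction $(0,0)$. Choosing $\delta'$ so small that $\|(u',v')\|$ stays in $[\|(u,v)\|/2,\,2\|(u,v)\|]$ and $\tilde t\leq\delta_0$ makes the $(0,0)$-hypotheses applicable, and the proximity condition is preserved because $\delta'\leq\|(u,v)\|/2\leq\|(u',v')\|$ guarantees $\delta't\leq\delta_0\tilde t$.

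For (ii), $(a)\Leftrightarrow(b)$ and $(a)\Rightarrow(c)$ are rescalings of the same type but now only in the $x$-variable, and hence routine. The substantive direction is $(c)\Rightarrow(a)$: if (a) fails, pick $x_k\to\xb$ with $x_k\neq\xb$ and $d(x_k,\M^{-1}(\yb))>k\,d(\yb,\M(x_k))$; passing to a subsequence along which $u_k':=(x_k-\xb)/\|x_k-\xb\|$ converges to some $u\in\Sp_{\R^s}$, one applies directional subregularity in direction $u$ with $t_k:=\|x_k-\xb\|$ to contradict the growth assumption.

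For (iii), given $t\in[0,\delta]$ and $u'\in\B(u,\delta)$, I distinguish two cases. If $d((\xb+tu',\yb),\Gr\M)\leq\delta t$, the hypothesis of metric regularity in direction $(u,0)$ applies with $v'=0$, giving the estimate with modulus $\kappa$. Otherwise, using $(\xb+tu',y)\in\Gr\M$ for any $y\in\M(\xb+tu')$, we have $d(\yb,\M(\xb+tu'))\geq d((\xb+tu',\yb),\Gr\M)>\delta t$, while the trivial bound $d(\xb+tu',\M^{-1}(\yb))\leq t\|u'\|\leq t(\|u\|+\delta)$ holds because $\xb\in\M^{-1}(\yb)$; combining, we obtain the estimate with modulus $(\|u\|+\delta)/\delta$. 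Choosing $\kappa'$ to be the larger of these two constants completes the proof.

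The main obstacle is the bookkeeping for $(b)\Rightarrow(c)$ in part (i): one must simultaneously keep $\tilde t$ below $\delta_0$, keep $(\tilde u',\tilde v')$ in the ball of radius $\delta_0$ about $(0,0)$, keep $\|(u',v')\|$ bounded away from zero so the rescaling is well-defined, and verify that the ``proximity-to-graph'' hypothesis scales correctly. All other steps are essentially scaling and compactness.
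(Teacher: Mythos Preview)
Your proof is correct. The paper itself gives essentially no argument: it says part~(i) ``follows immediately from the definition'', invokes \cite[Lemma~2.7]{Gfr14b} for part~(ii), and cites \cite[Lemma~1]{Gfr13a} for part~(iii). You have supplied the self-contained details that the paper omits.

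Your reductions are the natural ones and almost certainly coincide with what the cited references do. For (ii), the substantive step $(c)\Rightarrow(a)$ via compactness of $\Sp_{\R^s}$ --- extracting a limiting direction $u$ from a failure sequence $x_k$ and invoking subregularity in that direction --- is the standard device. For (iii), the two-case split on whether the proximity hypothesis $\dist{(\xb+tu',\yb),\Gr\M}\leq\delta t$ holds, combined with the trivial bound $\dist{\xb+tu',\M^{-1}(\yb)}\leq t\norm{u'}$ coming from $\xb\in\M^{-1}(\yb)$, is exactly the right way to compensate for the ``proximity'' clause present in directional regularity but absent from directional subregularity. For (i), the key observation that $(\xb,\yb)\in\Gr\M$ makes the proximity hypothesis automatic after rescaling (since $\dist{(x_k,y_k),\Gr\M}\leq\norm{(x_k,y_k)-(\xb,\yb)}=\delta t_k$) is precisely why the equivalence $(a)\Leftrightarrow(b)$ is ``immediate from the definition''. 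Your bookkeeping for $(b)\Rightarrow(c)$ is fine; the only thing worth remarking is that this implication does not in general reverse (metric regularity in every nonzero direction with direction-dependent moduli need not give a uniform modulus on a neighborhood), which is why the paper states only $(b)\Rightarrow(c)$ here, in contrast to part~(ii) where compactness of the unit sphere closes the loop.
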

\begin{proof}
  Statement (i) follows immediately from the definition, statement (ii) follows from the definition and \cite[Lemma 2.7]{Gfr14b} and statement (iii) was shown in \cite[Lemma 1]{Gfr13a}.
\end{proof}

The following theorem is a directional extension of the Mordukhovich criterion \cite{Mo92},\cite[Theorem 9.40]{RoWe98} for metric regularity.
\begin{theorem}\label{ThExtMordCrit}
Let $\M:\R^s\rightrightarrows\R^d$ be a multifunction with closed graph and let $(\xb,\yb)\in\Gr\M$. Then $\M$ is metrically regular in direction $(u,v)\in\R^s\times\R^d$ at $(\xb,\yb)$ if and only if
\[ 0\in D^\ast\M((\xb,\yb);(u,v))(v^\ast)\Rightarrow  v^\ast=0.\]
\end{theorem}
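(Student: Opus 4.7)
The plan is to establish both implications by contraposition.

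For the necessity, suppose $\M$ is metrically regular in direction $(u,v)$ with constants $\delta,\kappa>0$ and let $v^{\ast}\in\R^d$ satisfy $0\in D^{\ast}\M((\xb,\yb);(u,v))(v^{\ast})$. Unwinding the definition of the directional limiting coderivative produces sequences $t_k\searrow 0$, $(u_k,v_k)\to(u,v)$, $u_k^{\ast}\to 0$ and $v_k^{\ast}\to v^{\ast}$ with $(u_k^{\ast},-v_k^{\ast})\in\widehat N_{\Gr\M}(x_k,y_k)$ where $x_k:=\xb+t_ku_k$ and $y_k:=\yb+t_kv_k$; in particular $(x_k,y_k)\in\Gr\M$. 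The idea is to test this regular normal against admissible perturbations of the form $(x_k',y_k+t_kw)\in\Gr\M$: for arbitrary $w\in\R^d$ of small norm, directional metric regularity applied with $(u',v')=(u_k,v_k+w)$ at $t=t_k$ provides some $x_k'\in\M^{-1}(y_k+t_kw)$ with $\norm{x_k'-x_k}\le 2\kappa t_k\norm{w}$. Substituting into the defining inequality for the regular normal at $(x_k,y_k)$, dividing by $t_k$, and passing to the limit (handling the $o(\cdot)$-term via a standard diagonal argument over shrinking neighborhoods) yields $-\skalp{v^{\ast},w}\le 0$ for all small $w$, forcing $v^{\ast}=0$.

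For the sufficiency, argue by contraposition: if metric regularity in direction $(u,v)$ fails, then for each $k\ge 1$ there are $t_k\in(0,1/k]$ and $(u_k,v_k)\in\B((u,v),1/k)$ such that, with $x_k:=\xb+t_ku_k$, $y_k:=\yb+t_kv_k$ and $\alpha_k:=\dist{y_k,\M(x_k)}$, one has $\dist{(x_k,y_k),\Gr\M}\le t_k/k$ and $\dist{x_k,\M^{-1}(y_k)}>k\alpha_k$; closedness of $\Gr\M$ forces $\alpha_k>0$. Pick $(\tilde x_k,\tilde y_k)\in\Gr\M$ with $\norm{(\tilde x_k-x_k,\tilde y_k-y_k)}\le 2t_k/k$ and apply Ekeland's variational principle to the lower semicontinuous function $\Psi_k(x,y):=\norm{y-y_k}+\chi_{\Gr\M}(x,y)$ at $(\tilde x_k,\tilde y_k)$ with parameters $\varepsilon_k:=2t_k/k$ and $\lambda_k:=t_k/\sqrt{k}$. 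Since $\lambda_k/t_k=\varepsilon_k/\lambda_k=\oo(1)$, the resulting $(\hat x_k,\hat y_k)\in\Gr\M$ can be written $(\hat x_k,\hat y_k)=(\xb+t_k\hat u_k,\yb+t_k\hat v_k)$ with $(\hat u_k,\hat v_k)\to(u,v)$, and $(\hat x_k,\hat y_k)$ minimizes $\Psi_k+(\varepsilon_k/\lambda_k)\norm{(\cdot)-(\hat x_k,\hat y_k)}$ over $\R^s\times\R^d$.

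The Fermat rule combined with the finite-dimensional fuzzy sum rule for regular subdifferentials then produces a pair $(\hat u_k^{\ast},-\hat v_k^{\ast})\in\widehat N_{\Gr\M}(\hat x_k,\hat y_k)$ with $\norm{\hat u_k^{\ast}}=\oo(1)$ and $\hat v_k^{\ast}$ within $\varepsilon_k/\lambda_k$ of $\partial\norm{\cdot-y_k}(\hat y_k)$. The principal obstacle is to secure $\hat v_k^{\ast}$ bounded away from $0$ along a subsequence, which amounts to excluding the degenerate case $\hat y_k=y_k$; in that case $\hat x_k\in\M^{-1}(y_k)$ would force $\dist{x_k,\M^{-1}(y_k)}\le 2t_k/k+\lambda_k$ and hence $\alpha_k=\oo(t_k/k)$ through the failure inequality, and a secondary perturbation of $y_k$ (or an iteration of Ekeland with $\lambda_k$ tailored to this refined bound on $\alpha_k$, e.g.\ $\lambda_k=\sqrt{k}\,\alpha_k$) then rules out $\hat y_k=y_k$. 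With $\hat y_k\neq y_k$, $\hat v_k^{\ast}$ is within $\oo(1)$ of the unit vector $(\hat y_k-y_k)/\norm{\hat y_k-y_k}$, so a subsequence converges to some $v^{\ast}\in\Sp_{\R^d}$. The definition of the directional limiting coderivative then delivers $0\in D^{\ast}\M((\xb,\yb);(u,v))(v^{\ast})$ with $v^{\ast}\neq 0$, contradicting the hypothesis.
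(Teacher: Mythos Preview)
The paper does not supply its own proof of this theorem; it is stated as a known directional extension of the Mordukhovich criterion, established earlier by the first author in \cite{Gfr13a}. Hence there is nothing to compare against, and your attempt has to be assessed on its own.

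Your necessity argument is correct. One point should be tightened: at fixed $k$ the regular-normal inequality holds only on a neighborhood whose radius you do not control, so the cleanest route is to let $\norm{w}\to 0$ at fixed $k$, obtain $-\skalp{v_k^\ast,w}\le 2\kappa\norm{u_k^\ast}\norm{w}$ for all $w$ by positive homogeneity, and only then send $k\to\infty$. Your diagonal formulation also works.

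In the sufficiency direction the Ekeland strategy is right and you correctly isolate the obstacle $\hat y_k=y_k$, but the remedy you sketch is in fact a two-stage argument and must be written out as such. From the first Ekeland pass (with $\varepsilon_k=2t_k/k$, $\lambda_k=t_k/\sqrt{k}$) the degenerate case forces $k\alpha_k<2t_k/k+\lambda_k$, hence $\alpha_k\le 3t_k/k^{3/2}$. Only with this bound in hand does the second Ekeland pass, started at $(x_k,\tilde y_k')\in\Gr\M$ with $\tilde y_k'\in\M(x_k)$, $\norm{\tilde y_k'-y_k}\le 2\alpha_k$, and parameters $\varepsilon_k'=2\alpha_k$, $\lambda_k'=\sqrt{k}\,\alpha_k$, achieve all three goals simultaneously: $\lambda_k'<k\alpha_k<\dist{x_k,\M^{-1}(y_k)}$ rules out $\hat y_k'=y_k$; $\lambda_k'+2\alpha_k=O(t_k/k)=o(t_k)$ keeps $(\hat x_k',\hat y_k')$ in direction $(u,v)$; and $\varepsilon_k'/\lambda_k'=2/\sqrt{k}\to 0$. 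Without the first-stage bound on $\alpha_k$ you cannot guarantee the direction condition, so the single-stage version you wrote is not enough. Two further small points: you should record explicitly that failure of the regularity inequality forces $x_k\in\dom\M$ and $0<\alpha_k<\infty$ (so the starting point $(x_k,\tilde y_k')$ exists), and the Fermat rule delivers an element of $N_{\Gr\M}$ rather than $\widehat N_{\Gr\M}$, so one more routine approximation by regular normals (as in the paper's proof of Theorem~\ref{ThRPCP}) is required before invoking the definition of the directional limiting coderivative.
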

Hence, condition \eqref{EqEquivCrDirLimCoDer} holds if and only if $\M$ is metrically regular in every direction $(u,0)$ with $u\not=0$. However, by Lemma \ref{LemBasicPropDirMetr} we see that for verifying metric subregularity of $\M$ we only need metric subregularity of $\M$ in every direction $u\not=0$. Assuming some special structure of the multifunction $\M$, directional metric subregularity can be ensured by a second-order sufficient condition. This is done  in the following theorem which is a specialized version of \cite[Theorem 4.3(2.)]{Gfr14b}.

\begin{theorem}\label{ThSecOrdDirMetrSub} Let $(\xb,\yb)$ belong to the graph of the mapping $\M(x)=G(x)+Q(x)$, where $G:\R^s\to\R^d$ is strictly differentiable at $\xb$ and $Q:\R^s\rightrightarrows \R^d$ is a polyhedral multifunction, i.e., its graph is the union of finitely many convex polyhedra. Further let $u\not=0$ and assume that the limit
\[G''(\xb;u):=\lim_{\AT{t\searrow 0}{u'\to u}}\frac {G(\xb+tu')-G(\xb)-t\nabla G(\xb)u'}{t^2/2}\]
exists. If the inequality
\[\skalp{v^\ast,G''(\xb;u)}<0\]
holds for every nonzero element $0\not=v^\ast\in\R^d$ satisfying
\[0\in D^\ast\M((\xb,\yb);(u,0))(v^\ast)=\nabla G(\xb)^Tv^\ast+D^\ast Q((\xb,\yb-G(\xb)); (u,-\nabla G(\xb)u))(v^\ast),\]
then $\M$ is metrically subregular in direction $u$ at $(\xb,\yb)$.
\end{theorem}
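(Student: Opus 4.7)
The plan is to derive this result as a specialization of \cite[Theorem 4.3(2.)]{Gfr14b}, following the standard contradiction argument combined with the second-order expansion of $G$. First I would perform a preparatory reduction in which the linear part of $G$ is absorbed into $Q$. Define
\[\tilde G(x) := G(x) - G(\xb) - \nabla G(\xb)(x-\xb), \qquad \tilde Q(x) := G(\xb) + \nabla G(\xb)(x-\xb) + Q(x),\]
so that $\M(x) = \tilde G(x) + \tilde Q(x)$ with $\tilde G(\xb) = 0$, $\nabla \tilde G(\xb) = 0$, $\tilde G''(\xb;u) = G''(\xb;u)$, while $\tilde Q$ remains polyhedral (adding an affine single-valued map preserves the finite-union-of-polyhedra structure of $\Gr Q$). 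By \eqref{EqSumMF_Smooth_LimCoDeriv}, $D^\ast\M((\xb,\yb);(u,0))(v^\ast)=D^\ast\tilde Q((\xb,\yb);(u,0))(v^\ast)$, so the hypothesis becomes: for every $0\not=v^\ast$ with $0\in D^\ast\tilde Q((\xb,\yb);(u,0))(v^\ast)$, one has $\skalp{v^\ast,\tilde G''(\xb;u)}<0$.

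Next I would argue by contradiction. If $\M$ is not metrically subregular at $(\xb,\yb)$ in direction $u$, then there exist sequences $t_k\searrow 0$ and $u_k\to u$ such that $x_k:=\xb+t_ku_k$ satisfies $\dist{x_k,\M^{-1}(\yb)}>k\,\dist{\yb,\M(x_k)}$. Writing $q_k:=\yb-\tilde G(x_k)$ we have $\dist{\yb,\M(x_k)}=\dist{q_k,\tilde Q(x_k)}$, so I would select a projection $q_k'$ of $q_k$ onto the closed polyhedral set $\tilde Q(x_k)$ and normalize $v_k^\ast:=(q_k-q_k')/\|q_k-q_k'\|\in\Sp_{\R^d}$. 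Since $\Gr\tilde Q$ is a finite union of convex polyhedra, a refinement of the selection produces $(-u_k^\ast,v_k^\ast)\in\widehat N_{\Gr\tilde Q}(x_k,q_k')$ with $u_k^\ast\to 0$. The crucial observation is that $q_k-\yb=-\tilde G(x_k)=O(t_k^2)$ by the second-order differentiability of $\tilde G$, and $\|q_k-q_k'\|=\dist{\yb,\M(x_k)}<\dist{x_k,\M^{-1}(\yb)}/k\leq \|x_k-\xb\|/k=O(t_k/k)$; consequently $q_k'\to\yb$ with $\|q_k'-\yb\|/t_k\to 0$, i.e.\ the pairs $(x_k,q_k')$ approach $(\xb,\yb)$ along the direction $(u,0)$. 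Extracting a subsequence with $v_k^\ast\to v^\ast\in\Sp_{\R^d}$ yields, by the definition of the directional limiting coderivative, $0\in D^\ast\tilde Q((\xb,\yb);(u,0))(v^\ast)$.

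In the last step I would invoke the second-order hypothesis to reach the contradiction. From $\skalp{v^\ast,G''(\xb;u)}<0$ one obtains $\skalp{v_k^\ast,q_k-\yb}=\skalp{v_k^\ast,-\tilde G(x_k)}\geq c\,t_k^2$ for some $c>0$ and all large $k$, while $\skalp{v_k^\ast,q_k-q_k'}=\|q_k-q_k'\|\geq 0$, so that $\skalp{v_k^\ast,\yb-q_k'}\geq c\,t_k^2/2$. Combined with Robinson's polyhedral calmness of $\tilde Q^{-1}$ at $(\yb,\xb)$, this estimate allows one to construct a feasible point $\tilde x_k\in\M^{-1}(\yb)$ with $\|\tilde x_k-x_k\|=O(t_k^2)$, contradicting $\dist{x_k,\M^{-1}(\yb)}>k\,\dist{\yb,\M(x_k)}\geq 0$ once $k$ is large enough. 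The main obstacle is the bookkeeping that couples the three rates $t_k^2$, $\|q_k-q_k'\|$ and $\dist{x_k,\M^{-1}(\yb)}$ through the polyhedral calmness constant and, simultaneously, justifies the passage $v_k^\ast\to v^\ast$ \emph{in the directional sense} (i.e.\ along $(u,0)$ rather than $(u,v)$ with $v\not=0$); this is precisely the delicate part of \cite[Theorem 4.3(2.)]{Gfr14b}, whose proof specializes verbatim to the present additive-composite setting once the reduction in the first paragraph has been carried out.
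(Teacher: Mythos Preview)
The paper provides no proof of this theorem beyond the sentence introducing it as ``a specialized version of \cite[Theorem 4.3(2.)]{Gfr14b}.'' Your proposal takes the same route---you explicitly identify the result as a specialization of that theorem and, after a sketch, defer the delicate estimates to the cited proof---so the two approaches coincide.

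Your additional sketch is in the right spirit, and the preparatory reduction (absorbing the affine part of $G$ into the polyhedral part $\tilde Q$) is a clean way to set things up. You also correctly flag the two genuinely nontrivial points: first, a projection of $q_k$ onto $\tilde Q(x_k)$ only yields $v_k^\ast\in\widehat N_{\tilde Q(x_k)}(q_k')$, and producing a pair $(-u_k^\ast,v_k^\ast)\in\widehat N_{\Gr\tilde Q}(x_k,q_k')$ with $u_k^\ast\to 0$ requires either an Ekeland-type argument on $\Gr\tilde Q$ or explicit use of the polyhedral face structure, which your phrase ``a refinement of the selection'' does not supply; second, the final step invoking calmness of $\tilde Q^{-1}$ to build $\tilde x_k\in\M^{-1}(\yb)$ is not immediate, since $\M^{-1}(\yb)=\{x\mv \yb-\tilde G(x)\in\tilde Q(x)\}$ differs from $\tilde Q^{-1}(\yb)$ by the nonlinear $\tilde G$. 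Both points are precisely the ``bookkeeping'' you mention, and since the paper itself supplies none of these details either, your proposal is no less complete than the paper's own treatment.
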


Note that the criterion of Theorem \ref{ThSecOrdDirMetrSub} is stable under perturbations $h\in C^2(\R^s,\R^d)$ with $h(\xb)=0$, $\nabla h(\xb)=0$ and $\norm{\nabla^2 h(\xb)}$ sufficiently small.

In the following corollary we summarize the preceding results for the special case of constraint systems, cf. also \cite[Corollary 1]{GfrKl15}.
\begin{corollary}\label{CorMS_ConstrSyst}
Let the multifunction $\M:\R^s\rightrightarrows R^d$ be  given by
$\M(x):=G(x)-D$, where $G:R^s\to\R^d$ is continuously
differentiable and $D\subset\R^d$ is a closed  set. Then $\M$
is metrically subregular at $(\xb,0)$ if one of the following
conditions is fulfilled:
\begin{enumerate}
\item {\em First order sufficient condition  for metric
subregularity (FOSCMS)}: For every $0\not=u\in\R^s$ with $\nabla
G(\xb)u\in T_D(G(\xb))$ one has
\[\nabla G(\xb)^Tv^\ast=0,\ v^\ast \in N_D(G(\xb);\nabla G(\xb)u)\ \Longrightarrow v^\ast=0.\]
\item {\em Second order sufficient condition for metric
subregularity (SOSCMS)}: $G$ is twice Fr\'echet differentiable at
$\xb$, $D$ is the union of finitely many convex polyhedra and
for every $0\not=u\in\R^s$ with $\nabla G(\xb)u\in
T_D(G(\xb))$ one has
\[\nabla G(x)^Tv^\ast=0,\ v^\ast\in N_D\big(G(\xb);\nabla G(\xb)u\big),
\ u^T\nabla^2 ({v^\ast}^T G)(\xb)u\geq0\ \Longrightarrow v^\ast=0.\]
\end{enumerate}
\end{corollary}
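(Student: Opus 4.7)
The plan is to derive both parts by applying the directional-coderivative criteria already at our disposal, namely Theorem \ref{ThStabMS} for FOSCMS and Theorem \ref{ThSecOrdDirMetrSub} for SOSCMS. All of the work reduces to one explicit computation of the directional limiting coderivative of $\M(x)=G(x)-D$ at $(\xb,0)$ in the direction $(u,0)$.

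Writing $\M=G+Q$ with the constant multifunction $Q\equiv -D$ (so that $\Gr Q=\R^s\times(-D)$), the sum rule \eqref{EqSumMF_Smooth_LimCoDeriv} gives
\[
D^\ast\M((\xb,0);(u,0))(v^\ast)=\nabla G(\xb)^T v^\ast+D^\ast Q\big((\xb,-G(\xb));(u,-\nabla G(\xb)u)\big)(v^\ast).
\]
Since the directional limiting normal cone to the product $\R^s\times(-D)$ splits, since $N_{\R^s}(\xb;u)=\{0\}$, and since the elementary sign identity $N_{-D}(-G(\xb);-\nabla G(\xb)u)=-N_D(G(\xb);\nabla G(\xb)u)$ holds, one obtains the clean formula
\[
D^\ast\M((\xb,0);(u,0))(v^\ast)=\{\nabla G(\xb)^T v^\ast\}
\]
whenever $\nabla G(\xb)u\in T_D(G(\xb))$ and $v^\ast\in N_D(G(\xb);\nabla G(\xb)u)$, the set being empty otherwise.

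For part 1, I would substitute this identity into criterion \eqref{EqEquivCrDirLimCoDer} of Theorem \ref{ThStabMS}. Directions $u\neq 0$ with $\nabla G(\xb)u\notin T_D(G(\xb))$ contribute an empty coderivative and hence nothing to check; for the remaining directions \eqref{EqEquivCrDirLimCoDer} becomes exactly the implication stated in FOSCMS. Theorem \ref{ThStabMS} then delivers the metric subregularity of $\M$ at $(\xb,0)$. For part 2, I would use Lemma \ref{LemBasicPropDirMetr}(ii) to replace metric subregularity of $\M$ at $(\xb,0)$ by directional metric subregularity in every direction $u\neq 0$. Directions with $\nabla G(\xb)u\notin T_D(G(\xb))$ satisfy $(u,0)\notin T_{\Gr\M}(\xb,0)$ and are automatic. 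For the remaining directions I would invoke Theorem \ref{ThSecOrdDirMetrSub}: the polyhedrality of $D$ makes $Q\equiv -D$ a polyhedral multifunction, the assumed twice Fréchet differentiability of $G$ at $\xb$ yields $G''(\xb;u)=\nabla^2 G(\xb)(u,u)$ so that $\langle v^\ast,G''(\xb;u)\rangle=u^T\nabla^2({v^\ast}^T G)(\xb)u$, and the coderivative identity isolates precisely the elements $v^\ast\neq 0$ that have to be tested by Theorem \ref{ThSecOrdDirMetrSub}. The SOSCMS inequality is then exactly the required strict negativity.

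The main technical obstacle is the coderivative identity of the second paragraph: one has to keep track of the sign flip between $N_{-D}$ and $N_D$ in the directional setting, and to distinguish carefully the directions that lie in $T_{\Gr\M}(\xb,0)$ from those that do not, so that the quantification $\nabla G(\xb)u\in T_D(G(\xb))$ appearing in FOSCMS and SOSCMS exhausts all the nontrivial cases of \eqref{EqEquivCrDirLimCoDer} and of Theorem \ref{ThSecOrdDirMetrSub}. Once this identity is established, both implications follow almost mechanically from the previously proved theorems.
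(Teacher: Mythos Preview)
Your proposal is correct and follows exactly the route the paper intends: the corollary is stated without proof as a direct specialization of Theorem~\ref{ThStabMS} (for FOSCMS) and Theorem~\ref{ThSecOrdDirMetrSub} together with Lemma~\ref{LemBasicPropDirMetr}(ii) (for SOSCMS), cf.\ also \cite[Corollary~1]{GfrKl15}. Your explicit computation of $D^\ast\M((\xb,0);(u,0))$ via the sum rule \eqref{EqSumMF_Smooth_LimCoDeriv} with $Q\equiv -D$, the product splitting of $N_{\R^s\times(-D)}$, and the sign identity $N_{-D}(-G(\xb);-\nabla G(\xb)u)=-N_D(G(\xb);\nabla G(\xb)u)$ is precisely the calculation one has to make to see that \eqref{EqEquivCrDirLimCoDer} and the hypothesis of Theorem~\ref{ThSecOrdDirMetrSub} reduce to the stated FOSCMS and SOSCMS conditions.
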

For a second order sufficient condition for metric subreguality of constraint systems $0\in G(x)-D$ when $D$ is convex but not necessarily polyhedral we refer to \cite{Gfr11}.

\if{At the end of this subsection we formulate a general weak calmness criterion which follows immediately  from Theorem \ref{ThGenMS}.
\begin{corollary}
  \label{CorGenClm}Let $F:\R^d\rightrightarrows\R^s$ be a multifunction with closed graph. Given $(\yb,\xb)\in\Gr F$, assume that there do not exist sequences $t_k\searrow 0$, $u_k\in\Sp_{\R^s}$, $v_k\to 0_{\R^d}$,  $u_k^\ast\to 0_{\R^s}$, $v_k^\ast\in\Sp_{\R^s}$ with $v_k\not=0$,
  \begin{equation*}
  -v_k^\ast\in \hat D^\ast F(y_k,x_k)(- u_k^\ast)
  \end{equation*}
  and
  \begin{equation*}
  \lim_{k\to\infty}\frac{\skalp{v_k^\ast,y_k-\yb}}{\norm{y_k-\yb}}=1,
  \end{equation*}
  where $(y_k,x_k):=(\yb,\xb)+t_k(v_k,u_k)$. Then $F$ is calm at $(\yb,\xb)$.
\end{corollary}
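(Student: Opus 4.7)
The plan is to reduce the corollary to Theorem~\ref{ThGenMS} via the inverse-mapping device. Since $F$ is calm at $(\yb,\xb)$ if and only if $F^{-1}$ is metrically subregular at $(\xb,\yb)$, I set $\M:=F^{-1}$ and apply Theorem~\ref{ThGenMS} to $\M$ at the reference point $(\xb,\yb)\in\Gr\M$. The whole argument then consists in checking that the nonexistence statement formulated in the corollary is precisely the nonexistence statement required by Theorem~\ref{ThGenMS}, once rewritten in terms of $F$ instead of $F^{-1}$.

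The key translation rule is the coderivative exchange formula. Because $\Gr F^{-1}$ is obtained from $\Gr F$ by swapping the two coordinates, the regular normal cones satisfy $\hat N_{\Gr F^{-1}}(x,y)=\{(b,a)\mv (a,b)\in\hat N_{\Gr F}(y,x)\}$, whence Definition~4(ii) yields
\[u^\ast\in\hat D^\ast F^{-1}(x,y)(v^\ast)\ \Longleftrightarrow\ -v^\ast\in\hat D^\ast F(y,x)(-u^\ast).\]
Next I identify the perturbation sequences: $(x_k,y_k)=(\xb,\yb)+t_k(u_k,v_k)$ from Theorem~\ref{ThGenMS} and $(y_k,x_k)=(\yb,\xb)+t_k(v_k,u_k)$ from the corollary denote the very same sequences of base points in $\R^s\times\R^d$. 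Under this identification the inclusion $u_k^\ast\in\hat D^\ast\M(x_k,y_k)(v_k^\ast)$ of Theorem~\ref{ThGenMS} is, via the displayed equivalence, nothing but the inclusion $-v_k^\ast\in\hat D^\ast F(y_k,x_k)(-u_k^\ast)$ appearing in the corollary. All remaining ingredients ($t_k\searrow 0$, $v_k\to 0$, $u_k^\ast\to 0$, the unit-sphere constraints on $u_k$ and on $v_k^\ast$, the nondegeneracy $v_k\not=0$, and the limit relation $\skalp{v_k^\ast,y_k-\yb}/\norm{y_k-\yb}\to 1$) coincide term by term.

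Consequently, the assumption of the corollary is literally the assumption of Theorem~\ref{ThGenMS} applied to $\M=F^{-1}$, so that theorem yields metric subregularity of $F^{-1}$ at $(\xb,\yb)$, i.e., calmness of $F$ at $(\yb,\xb)$. The only genuine point of care is the sign bookkeeping in the coderivative exchange formula, which is precisely what produces the two minus signs in the inclusion $-v_k^\ast\in\hat D^\ast F(y_k,x_k)(-u_k^\ast)$; I would verify this one-line calculation at the outset, after which the remainder of the proof is a purely notational rewriting.
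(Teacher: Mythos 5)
Your proof is correct and follows exactly the route the paper intends: the paper simply asserts that Corollary~\ref{CorGenClm} ``follows immediately from Theorem~\ref{ThGenMS},'' and what makes it immediate is precisely the reduction you spell out, namely applying Theorem~\ref{ThGenMS} to $\M=F^{-1}$, using the equivalence between calmness of $F$ and metric subregularity of $F^{-1}$, and translating the coderivative inclusion via the swap of coordinates on $\Gr F$. Your sign check $u^\ast\in\hat D^\ast F^{-1}(x,y)(v^\ast)\Leftrightarrow -v^\ast\in\hat D^\ast F(y,x)(-u^\ast)$ is exactly right, so the two nonexistence hypotheses coincide term by term.
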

}\fi

\subsection{Auxiliary results}

In this section we consider the computation of the directional limiting normal cone to the graph of the normal cone mapping $N_\Gamma$ with a convex polyhedral set $\Gamma$. To this end, we introduce for each $(y,y^\ast)\in \Gr  N_\Gamma$ the {\em critical cone}
\[K_\Gamma(y,y^\ast):= T_\Gamma(y)\cap [y^\ast]^\perp,\]
where $[u]$ denotes the subspace $\{\alpha u\mv \alpha\in\R\}$ for any vector $u$. Further we set $K_\Gamma(y,y^\ast):=\emptyset$ if $(y,y^\ast)\not\in\Gr N_\Gamma$.

Now consider a fixed pair $(\yb,\yba)\in \Gr N_\Gamma$. Then it is well known, that the geometry of the normal cone mapping $N_\Gamma$ near $(\yb,\yba)$ coincides with the geometry of the normal cone mapping to $K_\Gamma(\yb,\yba)$ near $(0,0)$. In particular we have, cf. \cite[Lemma 2E.4]{DoRo14},
\begin{equation}\label{EqPolyRedLemma}
(\yb+v,\yba+v^\ast)\in\Gr N_\Gamma\ \Leftrightarrow\ (v,v^\ast)\in \Gr N_{K_\Gamma(\yb,\yba)} \mbox{ for $(v,v^\ast)$ sufficiently near to $(0,0)$},
\end{equation}
 and therefore
\begin{equation}\label{EqPolyTanCone} T_{\Gr N_\Gamma}(\yb,\yba)=\Gr N_{K_\Gamma(\yb,\yba)}.
\end{equation}
Further it was shown in \cite[Proof of Theorem 2]{DoRo96} that
\begin{equation}\label{EqPolyRegNormalCone}\widehat N_{\Gr N_\Gamma}(\yb,\yba)=(K_\Gamma(\yb,\yba))^\circ\times K_\Gamma(\yb,\yba)
\end{equation}
and that $N_{\Gr N_\Gamma}(\yb,\yba)$ is the union of all product sets $K^\circ\times K$ associated with cones $K$ of the form $F_1-F_2$, where $F_1$ and $F_2$ are closed faces of the critical cone $K_\Gamma(\yb,\yba)$ satisfying $F_2\subset F_1$.

Thanks to the definition of a face of a convex set, see \cite[Chapter 18]{Ro70}, the closed faces $F$ of any polyhedral convex cone $K$ are the polyhedral convex cones of the form
\[F= K\cap [z^\ast]^\perp\ \mbox{for some $z^\ast\in K^\circ$.}\]
We will denote the collection of all closed faces of a polyhedral convex cone $K$ by $\F(K)$.

In the following proposition we state a similar description of the directional limiting normal cone in terms of selected faces of the critical cone $K_\Gamma(\yb,\yba)$.
\begin{theorem}\label{ThPolyDirLimNormCone}
  Let $\Gamma$ be a convex polyhedral set in $\R^m$, and let $(\yb,\yba)\in\Gr N_\Gamma$ and $(v,v^\ast)\in T_{\Gr N_\Gamma}(\yb,\yba)$ be given. Then
  $N_{\Gr N_\Gamma}((\yb,\yba); (v,v^\ast))$ is the union of all product sets $K^\circ\times K$ associated with cones $K$ of the form $F_1-F_2$, where $F_1$ and $F_2$ are closed faces of the critical cone $K_\Gamma(\yb,\yba)$ satisfying $v\in F_2\subset F_1\subset [v^\ast]^\perp$.
\end{theorem}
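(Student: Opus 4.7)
The plan is to reduce the problem to the critical cone $\Kb := K_\Gamma(\yb,\yba)$ via the polyhedral reduction lemma \eqref{EqPolyRedLemma}, to evaluate the regular normal cone at nearby points of $\Gr N_\Gamma$ through \eqref{EqPolyRegNormalCone}, and finally to exploit the face structure of $\Kb$ to enumerate which critical cones arise in the outer limit. By \eqref{EqPolyRedLemma}, for $(w,w^\ast)$ near the origin one has $(\yb+w,\yba+w^\ast)\in\Gr N_\Gamma$ if and only if $(w,w^\ast)\in\Gr N_{\Kb}$, so the two graphs coincide locally. Since $\Kb$ is a cone, $N_{\Kb}$ is positively homogeneous, and $T_{\Kb}(tw)=T_{\Kb}(w)$, $[tw^\ast]^\perp=[w^\ast]^\perp$ for every $t>0$. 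Applying \eqref{EqPolyRegNormalCone} to $N_{\Kb}$ at $(tw,tw^\ast)\in\Gr N_{\Kb}$ then yields
\[
\widehat N_{\Gr N_\Gamma}(\yb+tw,\yba+tw^\ast)=K_{\Kb}(w,w^\ast)^\circ\times K_{\Kb}(w,w^\ast),
\]
independently of $t>0$, provided $(w,w^\ast)\in\Gr N_{\Kb}$. By the definition of the directional limiting normal cone, $N_{\Gr N_\Gamma}((\yb,\yba);(v,v^\ast))$ therefore equals the outer limit of the product sets $K_{\Kb}(w,w^\ast)^\circ\times K_{\Kb}(w,w^\ast)$ as $(w,w^\ast)\to(v,v^\ast)$ within $\Gr N_{\Kb}$.

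The second step is to describe $K_{\Kb}(w,w^\ast)$ in face-difference form. For each $(w,w^\ast)\in\Gr N_{\Kb}$, let $F_2$ be the unique closed face of $\Kb$ with $w\in\ri F_2$ (the minimal face containing $w$), and set $F_1:=\Kb\cap[w^\ast]^\perp$, the exposed face of $\Kb$ determined by $w^\ast$; from $\skalp{w,w^\ast}=0$ one has $w\in F_1$, hence $F_2\subset F_1$. Using the tangent-cone identity $T_{\Kb}(w)=\Kb+\Span F_2$ together with the algebraic identity $F_1+\Span F_2=F_1-F_2$, valid because $F_1$ is a convex cone closed under addition of $F_2\subset F_1$, a direct calculation gives $K_{\Kb}(w,w^\ast)=T_{\Kb}(w)\cap[w^\ast]^\perp=F_1-F_2$, matching the face-difference form in the statement.

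Finally, identify the admissible face pairs realized in the outer limit. Since $\Kb$ has only finitely many faces, any sequence $(w_k,w^\ast_k)\to(v,v^\ast)$ in $\Gr N_{\Kb}$ can be thinned to one along which both $F_2$ and $F_1$ are constant; closedness of $F_2$ then forces $v\in F_2$, while passing to the limit in $\skalp{f,w^\ast_k}=0$ for each $f\in F_1$ yields $F_1\subset[v^\ast]^\perp$. Conversely, given faces $F_2\subset F_1$ of $\Kb$ with $v\in F_2$ and $F_1\subset[v^\ast]^\perp$, choose $w_k\in\ri F_2$ with $w_k\to v$ (possible since $v\in F_2=\cl\ri F_2$) and $w^\ast_k\in\ri(F_1^\perp\cap\Kb^\circ)$ with $w^\ast_k\to v^\ast$ (possible since $F_1\subset[v^\ast]^\perp$ together with $v^\ast\in\Kb^\circ$ gives $v^\ast\in F_1^\perp\cap\Kb^\circ$, and this polyhedral dual face equals the closure of its relative interior); along such a sequence $(w_k,w^\ast_k)\in\Gr N_{\Kb}$ with $K_{\Kb}(w_k,w^\ast_k)=F_1-F_2$ constant, so $(F_1-F_2)^\circ\times(F_1-F_2)$ belongs to the outer limit. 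The chief obstacle is the combinatorial face bookkeeping, especially the verification of the identity $K_{\Kb}(w,w^\ast)=F_1-F_2$ via the tangent-cone formula and the closure identity, together with the polyhedral duality statement that $w^\ast_k\in\ri(F_1^\perp\cap\Kb^\circ)$ really produces the exposed face $F_1$; once these are in place, combining the three steps yields the claimed union over face pairs.
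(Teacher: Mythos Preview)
Your argument is correct and the overall architecture---reduce to $\Kb:=K_\Gamma(\yb,\yba)$ via \eqref{EqPolyRedLemma}, read off the regular normal cones through \eqref{EqPolyRegNormalCone}, and then enumerate the critical cones $K_{\Kb}(w,w^\ast)$ that occur in the outer limit---matches the paper's strategy. The difference lies in how the enumeration is carried out.

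The paper first proves two preparatory lemmas: Lemma~\ref{LemPolyDescr}, giving $K_\Gamma(y,y^\ast)=(\Kb\cap[y^\ast-\yba]^\perp)+[y-\yb]$ for $(y,y^\ast)$ near $(\yb,\yba)$, and Lemma~\ref{LemFace}, relating the face lattice of $(\Kb\cap[v^\ast]^\perp)+[v]$ to that of $\Kb$. It then invokes the Critical Superface Lemma \cite[Lemma~4H.2]{DoRo14} to list the critical cones near $K_\Gamma(\yb+\bar t v,\yba+\bar t v^\ast)$ as face differences of \emph{that} cone, and finally uses Lemma~\ref{LemFace} to translate these back into face differences of $\Kb$ satisfying $v\in F_2\subset F_1\subset[v^\ast]^\perp$.

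You bypass both preparatory lemmas and the Critical Superface Lemma: instead you identify $K_{\Kb}(w,w^\ast)=F_1-F_2$ directly, with $F_2$ the minimal face containing $w$ and $F_1=\Kb\cap[w^\ast]^\perp$, via the elementary identities $T_{\Kb}(w)=\Kb+\Span F_2$ and $(\Kb+\Span F_2)\cap[w^\ast]^\perp=F_1+\Span F_2=F_1-F_2$. The limit characterization then comes from an explicit approximation $(w_k,w_k^\ast)\to(v,v^\ast)$ with $w_k\in\ri F_2$ and $w_k^\ast\in\ri(\Kb^\circ\cap F_1^\perp)$, using the face--dual-face bijection between $\Kb$ and $\Kb^\circ$. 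This route is more self-contained and avoids the external Superface Lemma, at the price of invoking (without proof) the standard polyhedral duality fact that $w^\ast\in\ri(\Kb^\circ\cap F_1^\perp)$ forces $\Kb\cap[w^\ast]^\perp=F_1$. The paper's route, by contrast, packages this bookkeeping into Lemma~\ref{LemFace} and leans on a citable result for the enumeration step.
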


In order to prove this theorem we need two preparatory lemmas.

\begin{lemma}\label{LemPolyDescr}Let $\Gamma$ be a convex polyhedral set in $\R^m$, and let $(\yb,\yba)\in\Gr N_\Gamma$. Then there exists some radius $\rho>0$ such that for every $(y,y^\ast)\in\Gr N_\Gamma\cap \B((\yb,\yba),\rho)$ one has
\begin{equation}
  \label{EqCritCone}K_\Gamma(y,y^\ast)= (K_\Gamma(\yb,\yba)\cap [y^\ast-\yba]^\perp) +[y-\yb].
\end{equation}
\end{lemma}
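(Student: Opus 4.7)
The plan is to reduce everything to the local polyhedral geometry at $(\yb,\yba)$. Set $C:=T_\Gamma(\yb)$ and $K:=K_\Gamma(\yb,\yba)=C\cap[\yba]^\perp$. Since $\Gamma$ is a convex polyhedron, I can pick $\rho_1>0$ small enough that $\Gamma\cap\B(\yb,\rho_1)=(\yb+C)\cap\B(\yb,\rho_1)$; the standard identity for the tangent cone of a convex cone at one of its points then yields $T_\Gamma(y)=T_C(y-\yb)=C+[y-\yb]$ for every $y\in\Gamma\cap\B(\yb,\rho_1)$. I next shrink $\rho$ so that the polyhedral reduction \eqref{EqPolyRedLemma} applies: for any $(y,y^\ast)\in\Gr N_\Gamma\cap\B((\yb,\yba),\rho)$, writing $v:=y-\yb$ and $v^\ast:=y^\ast-\yba$, one has $(v,v^\ast)\in\Gr N_K$. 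This yields the orthogonality relations $\skalp{v,\yba}=0$ (because $v\in K\subset[\yba]^\perp$) and $\skalp{v,v^\ast}=0$ (because $v^\ast\in N_K(v)$), and the desired formula becomes the set equality
\[(C+[v])\cap[\yba+v^\ast]^\perp\;=\;(K\cap[v^\ast]^\perp)+[v].\]

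The inclusion ``$\supseteq$'' is immediate: for $w=k+tv$ with $k\in K\cap[v^\ast]^\perp$, membership in $C+[v]$ is visible because $K\subset C$, and $\skalp{w,\yba+v^\ast}$ expands into four inner products that all vanish by the orthogonality relations just listed together with $k\in[\yba]^\perp\cap[v^\ast]^\perp$. For the non-trivial inclusion ``$\subseteq$'', write any element of the left-hand side as $w=c+tv$ with $c\in C$; using $\skalp{v,\yba+v^\ast}=0$ this gives $\skalp{c,\yba+v^\ast}=0$. Moreover $\yba+v^\ast=y^\ast\in N_\Gamma(y)=(T_\Gamma(y))^\circ\subset C^\circ$, and therefore $c$ lies in the exposed face $E:=C\cap[\yba+v^\ast]^\perp$ of $C$.

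The decisive step is the geometric claim that $E\subset K$ once $\rho$ is small enough. Because $K$ is the face of $C$ exposed by $\yba$, the vector $\yba$ lies in the relative interior of the dual face $K^\diamond:=C^\circ\cap K^\perp$ of $C^\circ$. The polar $C^\circ$ is polyhedral and hence has only finitely many faces; list those faces $G^\diamond$ of $C^\circ$ that fail to contain $K^\diamond$. For each such $G^\diamond$ the relative-interior property forces $\yba\notin G^\diamond$, so $\dist{\yba,G^\diamond}>0$. Choosing $\rho$ strictly smaller than the minimum of these finitely many positive distances ensures that every $\yba+v^\ast$ with $\|v^\ast\|\le\rho$ and $\yba+v^\ast\in C^\circ$ lies only in faces of $C^\circ$ that contain $K^\diamond$. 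By the order-reversing duality between faces of $C$ and faces of $C^\circ$ this translates to $E\subset K$. With $c\in K$ established, $\skalp{c,\yba}=0$ combined with $\skalp{c,\yba+v^\ast}=0$ yields $\skalp{c,v^\ast}=0$, so $c\in K\cap[v^\ast]^\perp$ and $w=c+tv$ lies in the right-hand side.

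The main obstacle is precisely the exposed-face claim $E\subset K$: a small perturbation $v^\ast$ of $\yba$ constrained only by $v^\ast\in N_K(v)\subset K^\circ$ could, a priori, push $\yba+v^\ast$ into a face of $C^\circ$ incomparable with $K^\diamond$. Ruling this out is exactly where the combination $\yba\in\ri K^\diamond$ plus the finiteness of the face lattice of a polyhedral cone provides the uniform positive gap that closes the argument.
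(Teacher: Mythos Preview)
Your proof is correct. The overall route coincides with the paper's: both arguments reduce \eqref{EqCritCone} to the identity $T_\Gamma(\yb)\cap[y^\ast]^\perp = K_\Gamma(\yb,\yba)\cap[y^\ast-\yba]^\perp$, and the only nontrivial ingredient in either direction is the face inclusion $E:=C\cap[y^\ast]^\perp\subset K$. The paper simply quotes this inclusion (together with the formula $K_\Gamma(y,y^\ast)=(C\cap[y^\ast]^\perp)+[y-\yb]$) from the proof of \cite[Theorem~2]{DoRo96} and then performs the short orthogonality computation. You instead derive $K_\Gamma(y,y^\ast)=(C+[v])\cap[y^\ast]^\perp$ directly from $T_\Gamma(y)=C+[v]$ and supply a self-contained proof of $E\subset K$ via the order-reversing duality of face lattices: $\yba\in\ri(C^\circ\cap K^\perp)$, finiteness of the faces of $C^\circ$ gives a uniform gap, hence $y^\ast=\yba+v^\ast$ can lie only in faces of $C^\circ$ containing $K^\diamond$, and duality yields $E\subset K$. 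This buys independence from \cite{DoRo96} and makes transparent \emph{why} the exposed face shrinks under small perturbations of the normal; the paper's version is shorter by outsourcing exactly this step.
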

\begin{proof}
  From \cite[Proof of Theorem 2]{DoRo96} we can distill that there exists some radius $\rho>0$ such that for every $(y,y^\ast)\in\Gr N_\Gamma\cap \B((\yb,\yba),\rho)$ one has
  \begin{eqnarray}
    \label{EqPolyAuxCritCone1}K_\Gamma(y,y^\ast)&=&(T_\Gamma(\yb)\cap[y^\ast]^\perp)+[y-\yb]\\
    \label{EqPolyAuxCritCone2}T_\Gamma(\yb)\cap[y^\ast]^\perp&\subset& K_\Gamma(\yb,\yba).
  \end{eqnarray}
  Now consider $(y,y^\ast)\in\Gr N_\Gamma\cap \B((\yb,\yba),\rho)$. We will show that  $T_\Gamma(\yb)\cap[y^\ast]^\perp=K_\Gamma(\yb,\yba)\cap [y^\ast-\yba]^\perp$. Fix any $w\in T_\Gamma(\yb)\cap[y^\ast]^\perp$. By \eqref{EqPolyAuxCritCone2} we have $w\in K_\Gamma(\yb,\yba)$ and thus $w\in[\yba]^\perp$. Since $w\in [y^\ast]^\perp$ we obtain
  \[0=\skalp{y^\ast,w}=\skalp{\yba+(y^\ast-\yba),w}=\skalp{y^\ast-\yba,w}\]
  and $w\in T_\Gamma(\yb)\cap [\yba]^\perp\cap [y^\ast-\yba]^\perp=K_\Gamma(\yb,\yba)\cap [y^\ast-\yba]^\perp$ follows. This shows $T_\Gamma(\yb)\cap[y^\ast]^\perp\subset K_\Gamma(\yb,\yba)\cap [y^\ast-\yba]^\perp$. On the other hand we always have $[y^\ast]^\perp\supset [\yba]^\perp\cap[y^\ast-\yba]^\perp$ yielding $T_\Gamma(\yb)\cap[y^\ast]^\perp \supset T_\Gamma(\yb)\cap [\yba]^\perp\cap [y^\ast-\yba]^\perp=K_\Gamma(\yb,\yba)\cap [y^\ast-\yba]^\perp$. Hence the claimed relation $T_\Gamma(\yb)\cap[y^\ast]^\perp=K_\Gamma(\yb,\yba)\cap [y^\ast-\yba]^\perp$ holds true and the statement of the lemma follows from \eqref{EqPolyAuxCritCone1}.
\end{proof}

\begin{lemma}\label{LemFace}
  Let $K\subset \R^d$ be a convex polyhedral cone and let $(v,v^\ast)\in\Gr N_K$. Then
  \begin{equation}\label{EqFace1}
  \F((K\cap[v^\ast]^\perp)+[v])=\{F+[v]\mv F\in\F(K),  v\in F \subset [v^\ast]^\perp\}.
  \end{equation}
  and
  \begin{equation}\label{EqFace2}
  \{\tilde F_1-\tilde F_2\mv \tilde F_1,\tilde F_2\in\F((K\cap[v^\ast]^\perp)+[v]), \tilde F_2\subset \tilde F_1\}=\{F_1-F_2\mv F_1,F_2\in\F(K),  v\in F_2\subset F_1 \subset [v^\ast]^\perp\}.
  \end{equation}
\end{lemma}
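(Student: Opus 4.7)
The plan is to establish a bijection $\Phi\colon F\mapsto F+[v]$ between the set of faces of $K$ satisfying the restrictions on the right-hand side of \eqref{EqFace1} and $\F(\tilde K)$, where $\tilde K:=(K\cap[v^*]^\perp)+[v]$, and then to deduce (b) using that $v\in F_2$ forces $[v]\subset F_2-F_2$.

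The key preparatory computation is
\[\tilde K^\circ=\bigl((K\cap[v^*]^\perp)+[v]\bigr)^\circ=(K^\circ+[v^*])\cap[v]^\perp,\]
which holds without any closure because $K$ is polyhedral. Using that $v^*\in K^\circ$ and that $K^\circ$ is a cone, every $z^*\in\tilde K^\circ$ decomposes as $z^*=w^*-\beta v^*$ with $w^*\in K^\circ$, $\beta\geq 0$, and $\langle w^*,v\rangle=0$. For the $\supset$-inclusion in \eqref{EqFace1}, I take $F=K\cap[z^*]^\perp$ with $z^*\in K^\circ$, $v\in F$ and $F\subset[v^*]^\perp$. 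Then $\langle z^*,v\rangle=0$, so $z^*\in K^\circ\cap[v]^\perp\subset\tilde K^\circ$, and a direct computation using $\langle z^*,v\rangle=0$ and $F\subset[v^*]^\perp$ gives $\tilde K\cap[z^*]^\perp=F+[v]$. For the $\subset$-inclusion, given $\tilde F=\tilde K\cap[z^*]^\perp$ with $z^*=w^*-\beta v^*\in\tilde K^\circ$ as above, I set $F:=K\cap[w^*+v^*]^\perp$, which is a face of $K$ since $w^*+v^*\in K^\circ$. Because $\langle w^*,y\rangle\leq 0$ and $\langle v^*,y\rangle\leq 0$ for every $y\in K$, the vanishing of their sum forces both to vanish separately, so $F=K\cap[w^*]^\perp\cap[v^*]^\perp$; in particular $v\in F\subset[v^*]^\perp$, and a computation analogous to the first case yields $\tilde F=F+[v]$.

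To promote \eqref{EqFace1} into the bijection underlying (b) I check that $(F+[v])\cap K=F$ for every admissible $F$. Only the inclusion $\subset$ needs attention: writing $y=f+\alpha v\in K$ with $f\in F$, if $\alpha\geq 0$ then $y\in F$ because $v\in F$ and $F$ is a convex cone, while if $\alpha<0$ then $f/(1+|\alpha|)$ is a convex combination of $y,v\in K$ lying in the face $F$, so the defining face property of $F$ forces $y\in F$. Consequently $F=\tilde F\cap K$ is uniquely determined by $\tilde F$, and $\Phi$ becomes order-preserving in both directions: $\tilde F_2\subset\tilde F_1$ if and only if $F_2\subset F_1$. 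Finally, since $v\in F_2$ we have $\pm v\in F_2-F_2$, so $[v]\subset F_2-F_2\subset F_1-F_2$, whence
\[\tilde F_1-\tilde F_2=(F_1+[v])-(F_2+[v])=F_1-F_2+[v]=F_1-F_2.\]
Combined with the order-preservation, this identity produces both inclusions in \eqref{EqFace2}.

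The main technical hurdle is the face-property step in the $\alpha<0$ case of the computation $(F+[v])\cap K=F$. Without it the map $\Phi$ would fail to be injective and the clean correspondence between nested pairs in $\F(\tilde K)$ and nested pairs in $\{F\in\F(K)\mv v\in F\subset[v^*]^\perp\}$ needed for \eqref{EqFace2} would collapse; once that step is in place, everything else is bookkeeping.
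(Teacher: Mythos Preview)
Your argument is correct. For \eqref{EqFace1} you proceed essentially as the paper does: compute $\tilde K^\circ=(K^\circ+[v^\ast])\cap[v]^\perp$, and for a face $\tilde F=\tilde K\cap[z^\ast]^\perp$ write $z^\ast=w^\ast-\beta v^\ast$ and check that $F:=K\cap[w^\ast+v^\ast]^\perp=K\cap[w^\ast]^\perp\cap[v^\ast]^\perp$ does the job (the paper uses $z^\ast=w^\ast+\alpha v^\ast$ with $\alpha\in\R$, but the computation is identical). Your decomposition with $\beta\geq 0$ is valid since $\R_+ v^\ast\subset K^\circ$, though you never actually use the sign of $\beta$.

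For \eqref{EqFace2} your route differs from the paper's. You prove the extra lemma $(F+[v])\cap K=F$ via the extremality property of faces (the $\alpha<0$ case), which makes $\Phi$ an order-isomorphism between $\{F\in\F(K)\mid v\in F\subset[v^\ast]^\perp\}$ and $\F(\tilde K)$; then $\tilde F_1-\tilde F_2=F_1-F_2$ follows directly from $[v]\subset F_2-F_2$. The paper does not establish injectivity of $\Phi$. Instead, given $\tilde F_2\subset\tilde F_1$ with arbitrary representations $\tilde F_i=F_i+[v]$, it shows by an elementary cone argument that $\tilde F_2=(F_1\cap F_2)+[v]$ and then replaces $F_2$ by $F_2':=F_1\cap F_2$, which is nested in $F_1$; the difference $\tilde F_1-\tilde F_2=F_1-F_2'$ follows as in your computation. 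Your approach yields a stronger structural statement (the poset isomorphism) at the cost of invoking the face extremality property; the paper's approach is more elementary, using only that faces are convex cones containing $v$, but produces merely the set equality \eqref{EqFace2} without uniqueness of the correspondence.
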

\begin{proof}Note that for two convex polyhedral cones $K_1,K_2$ their polar cones $K_1^\circ$, $K_2^\circ$ and their sum $K_1+K_2$  are also convex polyhedral by  \cite[Corollaries 19.2.2, 19.3.2]{Ro70} and therefore closed. This implies $(K_1\cap K_2)^\circ=K_1^\circ +K_2^\circ$ and $(K_1+K_2)^\circ=K_1^\circ\cap K_2^\circ$ by \cite[Corollary 16.4.2]{Ro70}. Hence $\tilde K^\circ=(K^\circ+[v^\ast])\cap[v]^\perp$, where $\tilde K:=(K\cap[v^\ast]^\perp)+[v]$.
  Let $\tilde F\in \F(\tilde K)$ and consider $z^\ast\in \tilde K^\circ$ with $\tilde F=\tilde K\cap [z^\ast]^\perp$. From $z^\ast\in \tilde K^\circ$ we conclude $z^\ast\in [v]^\perp$ and $\tilde F=K\cap[v^\ast]^\perp\cap[z^\ast]^\perp+[v]$ follows. Further $z^\ast=w^\ast +\alpha v^\ast$ with $w^\ast\in K^\circ$  and $\alpha\in\R$, implying $[z^\ast]^\perp\cap[v^\ast]^\perp=[w^\ast]^\perp\cap[v^\ast]^\perp$. Since $v^\ast\in N_K(v)=K^\circ\cap[v]^\perp$, we obtain $w^\ast+v^\ast\in K^\circ$, $v\in [w^\ast+v^\ast]^\perp$ and
  $K\cap[w^\ast+v^\ast]^\perp=K\cap[w^\ast]^\perp\cap[v^\ast]^\perp$. This shows that $\tilde F= F+[v]$, where $F=K\cap[v^\ast]^\perp\cap[z^\ast]=K\cap[w^\ast+v^\ast]^\perp$ is a face of $K$ verifying $v\in F\subset[v^\ast]^\perp$. Conversely, let $F\in\F(K)$ satisfying $v\in F\subset[v^\ast]^\perp$ and choose $w^\ast\in K^\circ$ with $F=K\cap[w^\ast]^\perp$. Then $w^\ast+v^\ast\in K^\circ$ and $v\in =[w^\ast]^\perp\cap [v^\ast]^\perp=[w^\ast+v^\ast]^\perp\cap[v^\ast]^\perp$. Hence $w^\ast+v^\ast\in (\tilde K^\circ+[v^\ast])\cap[v]^\perp \cap[v]^\perp=\tilde K^\circ$ and $F+[v]=F\cap[v^\ast]^\perp+[v]=K\cap[w^\ast+v^\ast]^\perp\cap[v^\ast]^\perp+[v]=((K\cap [v^\ast]^\perp)+[v])\cap[w^\ast+v^\ast]^\circ\in\F(\tilde K)$ follows.\\
  In order to prove \eqref{EqFace2}, consider $\tilde F_1,\tilde F_2\in \F((K\cap[v^\ast]^\perp)+[v])$ with $\tilde F_2\subset \tilde F_1$ and, according to \eqref{EqFace1}, some corresponding faces $F_1,F_2\in \F(K)$ with $v\in F_i \subset [v^\ast]^\perp$ such that $\tilde F_i=F_i+[v]$, $i=1,2$. Now consider an arbitrary element $f_2\in F_2$. Then, due to $\tilde F_2\subset \tilde F_1$, there are $f_1\in F_1$ and $\alpha\in\R$ such that $f_2=f_1+\alpha v$. Expressing $\alpha$ as the difference of two nonnegative numbers $\alpha_1$ and $\alpha_2$, we obtain
  $f_2+\alpha_2 v=f_1+\alpha_1 v\in F_2\cap F_1$. Hence for all reals $\beta$ we have $f_2+\beta v=f_2+\alpha_2v +(\beta-\alpha_2)v\in (F_2\cap F_1)+[v]$ showing $\tilde F_2\subset (F_2\cap F_1)+[v]$. Since we obviously have $\tilde F_2\supset (F_2\cap F_1)+[v]$, the equality $\tilde F_2= F_2\cap F_1+[v]$ holds. The intersection $F_2'=F_2\cap F_1$ of  the  closed faces $F_1, F_2$ of $K$ is again a closed face of $K$, the inclusion $v\in F_2'\subset F_1\subset[v^\ast]^\perp$ obviously holds and we obtain
  \[\tilde F_1-\tilde F_2 =(F_1+[v])-(F_2'+[v])=F_1-F_2'+[\tilde v]=(F_1+\R^+\{v\})-(F_2'+\R_+\{v\})=F_1-F_2'.\]
  On the other hand, for faces $F_1,F_2$ of $K$ with $v\in F_2\subset F_1\subset [v^\ast]^\perp$ we have $F_2+[v]\subset F_1+[v]$ and
  \[F_1-F_2=(F_1+\R^+\{v\})-(F_2+\R_+\{v\})=F_1-F_2+[\tilde v]=(F_1+[v])-(F_2+[v]).\]
  Relation \eqref{EqFace2} now follows from this relation together with \eqref{EqFace1}.
\end{proof}

\begin{proof}(of Theorem \ref{ThPolyDirLimNormCone}) Let $\Kb$ denote the critical cone $K_\Gamma(\yb,\yba)$. Note that the requirement $(v,v^\ast)\in T_{\Gr N_\Gamma}(\yb,\yba)$ is equivalent to $(v,v^\ast)\in \Gr N_{\Kb}$ by virtue of (\ref{EqPolyTanCone}). This means that $v\in \Kb$ and $v^\ast \in N_{\Kb}(v)=\Kb^\circ\cap[v]^\perp$. Since $\Gr N_\Gamma$ is the union of finitely many convex polyhedrons, we can apply \cite[Lemma 3.4]{Gfr13b} together with \eqref{EqPolyRegNormalCone} to obtain
\begin{equation*}\label{EqPolyDirLimNormalCone1}N_{\Gr N_\Gamma}((\yb,\yba);(v,v^\ast))=\bigcup_{\AT{t\in(0,\bar t]}{(w,w^\ast)\in\B((v,v^\ast),\delta)}}(K_\Gamma(\yb+tw,\yba+tw^\ast))^\circ\times K_\Gamma(\yb+tw,\yba+tw^\ast)\end{equation*}
for all $\delta,\bar t>0$ sufficiently small.  By virtue of \eqref{EqPolyRedLemma}, we can choose $\delta$ and $\bar t$  small enough such that for every $t\in(0,\bar t)$ and every $(w,w^\ast)\in \B((v,v^\ast),\delta)$ the condition $(\yb+tw,\yba+tw^\ast)\in\Gr N_\Gamma$ is equivalent to $(w,w^\ast)\in \Gr N_{\Kb}$. By taking into account that $K_\Gamma(\yb+tw,\yba+tw^\ast)=\emptyset$ when $(\yb+tw,\yba+tw^\ast)\not\in\Gr N_\Gamma$, we arrive at the more precise statement
 \begin{equation}\label{EqPolyDirLimNormalCone2}N_{\Gr N_\Gamma}((\yb,\yba);(v,v^\ast))=\bigcup_{\AT{t\in(0,\bar t)}{(w,w^\ast)\in\B((v,v^\ast),\delta)\cap\Gr N_{\Kb}}}(K_\Gamma(\yb+tw,\yba+tw^\ast))^\circ\times K_\Gamma(\yb+tw,\yba+tw^\ast).\end{equation}
Further, by  decreasing $\bar t$ if necessary, we can also assume that $(\yb+tw,\yba+tw^\ast)\subset \Gr N_\Gamma\cap \B((\yb,\yba),\rho)$ holds for all $t\in(0,\bar t)$ and all $(w,w^\ast)\in\B((v,v^\ast),\delta)\cap\Gr N_{\Kb}$, where $\rho$ is given by Lemma \ref{LemPolyDescr}, implying  $K_\Gamma(\yb+tw,\yba+tw^\ast))=(\Kb\cap[tw^\ast]^\perp)+[tw]=(\Kb\cap[w^\ast]^\perp)+[w]=K_\Gamma(\yb+\bar tw,\yba+\bar tw^\ast))$
and
 \begin{equation}\label{EqPolyDirLimNormalCone3}N_{\Gr N_\Gamma}((\yb,\yba);(v,v^\ast))=\bigcup_{(w,w^\ast)\in\B((v,v^\ast),\delta)\cap\Gr N_{\Kb}}(K_\Gamma(\yb+\bar tw,\yba+\bar tw^\ast))^\circ\times K_\Gamma(\yb+\bar tw,\yba+\bar tw^\ast).\end{equation}
 By the Critical Superface Lemma \cite[Lemma 4H.2]{DoRo14} we obtain
 \begin{eqnarray*}\lefteqn{\{K_\Gamma(\yb+\bar tw,\yba+\bar tw^\ast)\mv (w,w^\ast)\in\B((v,v^\ast),\delta)\cap\Gr N_{\Kb}\}}\\
 &\hspace{4em}&= \{F_1-F_2\mv F_1,F_2\in \F(K_\Gamma(\yb+\bar tv,\yba+\bar tv^\ast)),\ F_2\subset F_1\}
 \end{eqnarray*}
 for every $\delta>0$ sufficiently small and the statement of the theorem follows from \eqref{EqPolyDirLimNormalCone3} and Lemmas \ref{LemPolyDescr},\ref{LemFace}.
\end{proof}

\if{
\begin{lemma}\label{LemPolyAux1}Let $\Gamma$ be a convex polyhedral set in $\R^m$, and let $(\yb,\yba)\in\Gr N_\Gamma$. Then there exists some radius $\rho>0$ such that for every $(y,y^\ast)\in\Gr N_\Gamma\cap \B((\yb,\yba),\rho)$ the following properties hold:
\begin{enumerate}
  \item[(i)]$K_\Gamma(y,y^\ast)=(T_\Gamma(\yb)\cap[y^\ast]^\perp)+[y-\yb]$;
  \item[(ii)]$T_\Gamma(\yb)\cap[y^\ast]^\perp$ is a closed face of $K_\Gamma(\yb,\yba)$.
\end{enumerate}
\end{lemma}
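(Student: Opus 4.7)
The plan is to localise $\Gamma$ to its tangent cone $K:=T_\Gamma(\yb)$ and then reduce both assertions to elementary manipulations inside the polyhedral cone $K$. Since $\Gamma$ is polyhedral, there is $\rho_0>0$ such that $\Gamma\cap\B(\yb,\rho_0)=(\yb+K)\cap\B(\yb,\rho_0)$. For any $y\in\Gamma$ with $\norm{y-\yb}<\rho_0$, writing $v:=y-\yb$, this local description yields $T_\Gamma(y)=T_K(v)$ and $N_\Gamma(y)=N_K(v)=K^\circ\cap[v]^\perp$. I will also use the standard identity $T_K(v)=K+[v]$, which holds for any convex cone $K$ and any $v\in K$. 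Consequently, the hypothesis $(y,y^\ast)\in\Gr N_\Gamma$ translates into $v\in K$, $y^\ast\in K^\circ$ and $\skalp{y^\ast,v}=0$.

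To establish (i), insert the above formula into the definition of the critical cone to obtain $K_\Gamma(y,y^\ast)=(K+[v])\cap[y^\ast]^\perp$. The inclusion $(K\cap[y^\ast]^\perp)+[v]\subseteq(K+[v])\cap[y^\ast]^\perp$ is immediate from $\skalp{y^\ast,v}=0$. Conversely, any $u$ in the right-hand side admits a decomposition $u=k+\alpha v$ with $k\in K$ and $\alpha\in\R$, and the identity
\[0=\skalp{y^\ast,u}=\skalp{y^\ast,k}+\alpha\skalp{y^\ast,v}=\skalp{y^\ast,k}\]
forces $k\in K\cap[y^\ast]^\perp$, hence $u\in(K\cap[y^\ast]^\perp)+[v]$. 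This proves (i).

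For (ii), write the polyhedral cone $K$ via finitely many generators $u_1,\dots,u_N$ (the extreme rays of $K$ together with a basis of its lineality space), so that every $u\in K$ admits a representation $u=\sum_{i=1}^N\lambda_iu_i$ with $\lambda_i\geq 0$ on extreme-ray indices and $\lambda_i\in\R$ on lineality indices. Observing that $\skalp{\yba,u_i}=0$ on lineality indices (because $\pm u_i\in K$ and $\yba\in K^\circ$), set $I^-:=\{i\mv\skalp{\yba,u_i}<0\}$, which consists of extreme-ray indices only. By continuity, choose $\rho_1>0$ so small that $\skalp{y^\ast,u_i}<0$ for all $y^\ast\in\B(\yba,\rho_1)$ and $i\in I^-$, and put $\rho:=\min\{\rho_0,\rho_1\}$. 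Now fix $(y,y^\ast)\in\Gr N_\Gamma\cap\B((\yb,\yba),\rho)$ and any $u\in K\cap[y^\ast]^\perp$ and expand $u=\sum\lambda_iu_i$: then $\sum\lambda_i\skalp{y^\ast,u_i}=\skalp{y^\ast,u}=0$, where each nonzero summand corresponds to an extreme ray and is nonpositive, hence vanishes; in particular $\lambda_i=0$ for $i\in I^-$. Therefore
\[\skalp{\yba,u}=\sum_i\lambda_i\skalp{\yba,u_i}=0,\]
since every surviving generator $u_i$ satisfies $\skalp{\yba,u_i}=0$. This shows $K\cap[y^\ast]^\perp\subseteq K\cap[\yba]^\perp=K_\Gamma(\yb,\yba)$. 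Finally, $K\cap[y^\ast]^\perp$ is an exposed face of the polyhedral cone $K$ because $y^\ast\in K^\circ$, and a face of $K$ that is contained in a face $F$ of $K$ is automatically a face of $F$; this yields (ii).

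The main technical ingredient is the tangent-cone identity $T_K(v)=K+[v]$ together with the elementary fact that a face of $K$ contained in a face $F$ of $K$ is a face of $F$; once these are in hand, the argument reduces to a continuity argument on the finite set of extreme-ray inner products $\skalp{\cdot,u_i}$, so no deeper obstacle is expected.
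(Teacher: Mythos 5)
Your proof is correct, and it is genuinely more self-contained than the paper's: for this lemma the paper simply cites \cite[Proof of Theorem~2]{DoRo96}, whereas you reprove the two facts from scratch by localizing $\Gamma$ near $\yb$ to the shifted tangent cone $\yb+K$ and then working entirely inside the polyhedral cone $K=T_\Gamma(\yb)$. Your part (i) is a clean direct computation once you have the tangent-cone identity $T_K(v)=K+[v]$ (which for a convex cone $K$ and $v\in K$ always gives $T_K(v)=\cl(K+[v])$; the closure is superfluous here only because $K$ is polyhedral, so your aside that the identity holds for \emph{any} convex cone should be read with that caveat). Your part (ii) replaces the reference to Dontchev--Rockafellar's argument by a Minkowski--Weyl decomposition of $K$ into lineality directions and extreme rays together with a continuity argument on the finitely many inner products $\skalp{\cdot,u_i}$: you show that any $u\in K\cap[y^\ast]^\perp$ has, in every conic representation, zero coefficient on each extreme ray with $\skalp{\yba,u_i}<0$, hence $\skalp{\yba,u}=0$, which yields the inclusion $K\cap[y^\ast]^\perp\subseteq K_\Gamma(\yb,\yba)$; the face assertion then follows because $K\cap[y^\ast]^\perp$ is an exposed face of $K$ and a face of $K$ sitting inside another face of $K$ is a face of that larger face. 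This is a perfectly valid elementary route; what it buys is independence from the external reference at the cost of a slightly longer argument. One harmless technicality: with $\rho=\min\{\rho_0,\rho_1\}$ the closed ball $\B((\yb,\yba),\rho)$ may touch the boundary of the localization ball $\B(\yb,\rho_0)$, so to be completely safe take $\rho$ strictly smaller than both radii.
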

\begin{proof}Follows from \cite[Proof of Theorem 2]{DoRo96}.
\end{proof}
\begin{lemma}\label{LemPolyAux2}Let $C_1,C_2\in\R^m$ be two  convex cones satisfying $C_1\supset C_2$. Then for every $v\in\ri C_2$ there holds
\[C_1-C_2 = C_1+[v]. \]
\end{lemma}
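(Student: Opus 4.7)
The plan is to prove the two inclusions separately, using only that $C_1,C_2$ are convex cones (so $0\in C_i$, $C_i+C_i=C_i$, and $\lambda C_i\subset C_i$ for $\lambda\ge 0$), that $C_2\subset C_1$, and that $v\in\ri C_2\subset C_2\subset C_1$.

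For the easy direction $C_1+[v]\subset C_1-C_2$, I would split on the sign of the scalar. Given $a+\alpha v$ with $a\in C_1$ and $\alpha\in\R$: if $\alpha\ge 0$ then $\alpha v\in C_2\subset C_1$, whence $a+\alpha v\in C_1+C_1=C_1=C_1-\{0\}\subset C_1-C_2$; if $\alpha<0$ then $-\alpha v\in C_2$ and $a+\alpha v=a-(-\alpha v)\in C_1-C_2$.

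The reverse inclusion $C_1-C_2\subset C_1+[v]$ is the substantive step. Given $a-b$ with $a\in C_1$ and $b\in C_2$, I aim to produce some $\lambda>0$ with $\lambda v-b\in C_2$; once this is available, setting $c:=a+(\lambda v-b)$ yields $c\in C_1+C_2\subset C_1+C_1=C_1$, and $a-b=c-\lambda v\in C_1+[v]$. Existence of such a $\lambda$ is where $v\in\ri C_2$ is indispensable: since $b\in C_2$ and $v$ is a relative-interior point of the convex set $C_2$, the standard line-segment principle for relative interiors permits walking slightly past $v$ away from $b$ within $C_2$, so there exists $s>0$ with $(1+s)v-sb=v+s(v-b)\in C_2$. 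The cone property of $C_2$ then allows division by $s$, giving $\lambda v-b\in C_2$ for $\lambda:=(1+s)/s>0$. The only delicate point in the argument is this "walking past $v$" step, and it is exactly what relative-interior membership affords, so I do not foresee any other obstacle.
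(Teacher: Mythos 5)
Your proof is correct and takes essentially the same route as the paper's: both directions are handled the same way, and for the substantive inclusion $C_1 - C_2 \subset C_1 + [v]$ both arguments invoke the line-extension characterization of the relative interior (Rockafellar, Theorem 6.4) to push slightly beyond $v$ away from $b$ inside $C_2$, then rescale by the cone property. The only cosmetic difference is in the easy direction: you split on the sign of $\alpha$ (and at $\alpha = 0$ implicitly use $0\in C_2$ via $C_1 = C_1 - \{0\}$), whereas the paper writes $\lambda = \alpha - \beta$ with $\alpha,\beta > 0$ strictly, which sidesteps any reliance on $0\in C_2$; in the paper's actual application $C_2$ is a face of a polyhedral cone and so contains $0$, so this is a non-issue.
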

\begin{proof}
 Consider $c_1\in C_1$, $\lambda\in\R$ and choose $\alpha,\beta>0$ such that $\lambda=\alpha-\beta$. Then $c_1+\alpha v\in C_1$ and $\beta v\in C_2$ showing
 $c_1+\lambda v=(c_1+\alpha v)- \beta v\in C_1-C_2$ and we conclude $C_1+[v]\subset C_1-C_2$. To show the reverse inclusion, consider $c_1\in C_1$ and $c_2\in C_2$. Since $v\in\ri C_2$, we can find $\lambda>1$ such that $\lambda v+(1-\lambda)c_2=:\bar c_2\in C_2$, cf. \cite[Theorem 6.4]{Ro70}. Hence $c_2=\frac 1{\lambda -1}(\lambda v-\bar c_2)$ and, since $C_1$ is a convex cone containing $C_2$, we obtain
 \[c_1-c_2=(c_1+\frac 1{\lambda-1} \bar c_2)-\frac{\lambda}{\lambda-1} v\in C_1+[v].\]
 Thus $C_1-C_2\subset C_1+[v]$ also holds and this completes the proof.
\end{proof}
\begin{proof}(of Theorem \ref{ThPolyDirLimNormCone}) Let $\Kb$ denote the critical cone $K_\Gamma(\yb,\yba)$. Note that the requirement $(v,v^\ast)\in T_{\Gr N_\Gamma}(\yb,\yba)$ is equivalent to $(v,v^\ast)\in \Gr N_{\Kb}$ by virtue of (\ref{EqPolyTanCone}). This means that $v\in \Kb$ and $v^\ast \in N_{\Kb}(v)=\Kb^\circ\cap[v]^\perp$. Since $\Gr N_\Gamma$ is the union of finitely many convex polyhedrons, we can apply \cite[Lemma 3.4]{Gfr13b} together with \eqref{EqPolyRegNormalCone} to obtain
\begin{equation*}\label{EqPolyDirLimNormalCone1}N_{\Gr N_\Gamma}((\yb,\yba);(v,v^\ast))=\bigcup_{\AT{t\in(0,\delta)}{(w,w^\ast)\in\B((v,v^\ast),\delta)}}(K_\Gamma(\yb+tw,\yba+tw^\ast))^\circ\times K_\Gamma(\yb+tw,\yba+tw^\ast)\end{equation*}
for all $\delta>0$ sufficiently small.  By virtue of \eqref{EqPolyRedLemma}, we can choose $\delta$ small enough such that for every $t\in(0,\delta)$ and every $(w,w^\ast)\in \B((v,v^\ast),\delta)$ the condition $(\yb+tw,\yba+tw^\ast)\in\Gr N_\Gamma$ is equivalent to $(w,w^\ast)\in \Gr N_{\Kb}$. By taking into account that $K_\Gamma(\yb+tw,\yba+tw^\ast)=\emptyset$ when $(\yb+tw,\yba+tw^\ast)\not\in\Gr N_\Gamma$, we arrive at the more precise statement
 \begin{equation}\label{EqPolyDirLimNormalCone2}N_{\Gr N_\Gamma}((\yb,\yba);(v,v^\ast))=\bigcup_{\AT{t\in(0,\delta)}{(w,w^\ast)\in\B((v,v^\ast),\delta)\cap\Gr N_{\Kb}}}(K_\Gamma(\yb+tw,\yba+tw^\ast))^\circ\times K_\Gamma(\yb+tw,\yba+tw^\ast).\end{equation}
Further, by  decreasing $\delta$ if necessary, we can also assume that $(\yb+tw,\yba+tw^\ast)\subset \Gr N_\Gamma\cap \B((\yb,\yba),\rho)$ holds for all $t\in(0,\delta)$ and all $(w,w^\ast)\in\B((v,v^\ast),\delta)\cap\Gr N_{\Kb}$, where $\rho$ is given by Lemma \ref{LemPolyAux1}. Applying Lemma \ref{LemPolyAux1}(ii) once more with data $\Gamma$ and $(\yb,\yba)$ replaced by $\Kb$ and $(0,v^\ast)$, respectively, we can also assume that $T_{\Kb}(0)\cap [w^\ast]^\perp=\Kb\cap [w^\ast]^\perp$ is a face of $K_{\Kb}(0,v^\ast)=\Kb\cap [v^\ast]^\perp$ for every $(w',w^\ast)\in \Gr N_{\Kb}\cap \B((0,v^\ast),\delta)$. In particular, by taking $w'=0$ we obtain that
\begin{equation}\label{EqPolyAux1}\Kb\cap [w^\ast]^\perp\subset \Kb\cap [v^\ast]^\perp \; \forall w^\ast\in \Kb^\circ\cap \B(v^\ast,\delta).
\end{equation}
Since $\Kb$ is a convex polyhedral cone, we can choose $\delta>0$  small enough such that, in addition, one has $v+2(w-v)\in \Kb$ for all $w\in\B(v,\delta)\cap \Kb$.

In order to prove the \red{theorem} we have to show that the cones $K_\Gamma(\yb+tw,\yba+tw^\ast)$ appearing in \eqref{EqPolyDirLimNormalCone2} are precisely the cones of the form $F_1-F_2$, where $F_1$ and $F_2$ are closed faces of the critical cone $\Kb$ satisfying $v\in F_2\subset F_1\subset [v^\ast]^\perp$. We will proceed in a similar way as in \cite[Proof of Lemma 4H.2]{DoRo14}.

 So, consider any $t\in(0,\delta)$, any $(w,w^\ast)\in \B((v,v^\ast),\delta)\cap\Gr N_{\Kb}$ and set $F_1:=T_\Gamma(\yb)\cap[\yba+tw^\ast]^\perp$. By Lemma \ref{LemPolyAux1}(ii) $F_1$  is a closed face of $\Kb$ and, since $w^\ast\in N_{\Kb}(w)=\Kb\cap[w]^\perp\subset [w]^\perp$ and $w\in \Kb=T_\Gamma(\yb)\cap [\yba]^\perp$, we have $w\in F_1$. By taking $F_2$ as the closed face of $F_1$ having $w$ in its relative interior, from Lemma \ref{LemPolyAux2} together with Lemma \ref{LemPolyAux1}(i) we deduce that
\[K_\Gamma(\yb+tw,\yba+tw^\ast)=(T_\Gamma(\yb)\cap[\yba+tw^\ast]^\perp)+[tw]=(T_\Gamma(\yb)\cap[\yba+tw^\ast]^\perp)+[w]=F_1-F_2.\]
Since $F_2$ is a face of  $F_1$ which is itself a face of $\Kb$, we conclude that $F_2$ is a face of $\Kb$. Since $w\in F_2$ is in the relative interior of the line segment connecting $v,v+2(w-v)\in \Kb$, both endpoints $v$ and $v+2(w-v)$ must belong to $F_2$ (see \cite[Chapter 18]{Ro70}), in particular $v\in F_2$. Next we show that $F_1\subset [v^\ast]^\perp$. Since $F_1\subset \Kb\subset[\yba]^\perp$, for every $z\in F_1\subset[\yba +tw^\ast]^\perp$ we have $z\in [w^\ast]^\perp$ implying $F_1\subset \Kb\cap [w^\ast]^\perp$. On the other hand, $[\yba+tw^\ast]^\perp\supset [\yba]^\perp\cap[w^\ast]^\perp$ and thus $F_1\supset T_\Gamma(\yb)\cap [\yba]^\perp\cap[w^\ast]^\perp=\Kb\cap [w^\ast]^\perp$. Hence $F_1=\Kb\cap [w^\ast]^\perp$ and since $w^\ast\in N_{\Kb}(w)\cap \B(v^\ast,\delta)\subset \Kb^\circ\cap \B(v^\ast,\delta)$, we obtain $F_1\subset [v^\ast]^\perp$ from \eqref{EqPolyAux1}. Thus the desired representation of $K_\Gamma(\yb+tw,\yba+tw^\ast)$ is achieved.

Conversely, if $K:=F_1-F_2$ for closed faces $F_1$ and $F_2$ of $\Kb$ with $v\in F_2\subset F_1\subset [v^\ast]^\perp$, then we claim the existence of $(w,w^\ast)\in\B((v,v^\ast),\delta)\cap \Gr N_{\Kb}$ and $t\in (0,\delta)$ such that $K=K_\Gamma(\yb+tw,\yba+tw^\ast)$.
Since $F_1$ is also a closed face of $\Kb\cap [v^\ast]^\perp$, there exists some $z^\ast\in (\Kb\cap [v^\ast]^\perp)^\circ=\Kb^\circ+[v^\ast]$ with $F_1=\Kb\cap [v^\ast]^\perp\cap[z^\ast]^\perp$. Then $z^\ast=k^\ast+\lambda v^\ast$ with $k^\ast\in \Kb^\circ$ and $\lambda\in\R$. Let $z\in\ri F_2$ and choose
$\tau\in (0,1]$ so small that $1-\tau+\tau\lambda>0$ and $(w,w^\ast):=(1-\tau)(v,v^\ast)+\tau(z,z^\ast)\in \B((v,v^\ast),\delta)$. Since $v\in F_2$ we have $w\in\ri F_2$ and thus $K=F_1+[w]$ by Lemma \ref{LemPolyAux2}. Further we have $w^\ast=\tau k^\ast+(1-\tau+\tau\lambda)v^\ast\in \Kb^\circ$ and
$w\in F_2\subset F_1= \Kb\cap[v^\ast]^\perp\cap [z^\ast]^\perp\subset \Kb\cap[w^\ast]^\perp$ and consequently $(w,w^\ast)\in\Gr N_{\Kb}\cap \B((v,v^\ast),\delta)$. Since both $k^\ast$ and $v^\ast$ belong to $\Kb^\circ=(T_\Gamma(\yb)\cap[\yba]^\perp)^\circ=N_\Gamma(\yb)+[\yba]$, we can find elements $n_k^\ast, n_v^\ast\in N_\Gamma(\yb)$ and reals $\gamma_k,\gamma_n$ with  $k^\ast=n_k^\ast+\gamma_k \yba$ and $v^\ast=n_v^\ast+\gamma_v\yba$. Now choose $t\in(0,\delta)$ so that $1+t(\tau\gamma_k+(1-\tau+\tau\lambda)\gamma_v)>0$. For every $z'\in T_\Gamma(\yb)\cap[\yba+tw^\ast]^\perp$ we have
\begin{eqnarray*}0&=&\skalp{\yba+tw^\ast,z'}=\skalp{\yba+t(\tau k^\ast +(1-\tau+\tau\lambda)v^\ast),z'}\\ &=&(1+t(\tau\gamma_k+(1-\tau+\tau\lambda)\gamma_v))\skalp{\yba,z'}+
t\tau\skalp{n_k^\ast,z'}+t(1-\tau+\tau\lambda)\skalp{n_v^\ast,z'}.
\end{eqnarray*}
Since the elements  $\yba$, $n_k^\ast$ and $n_v^\ast$ belong to $N_\Gamma(\yb)=(T_\Gamma(\yb))^\circ$, we have $\skalp{\yba,z'}\leq 0$, $\skalp{n_k^\ast,z'}\leq 0$ and $\skalp{n_v^\ast,z'}\leq 0$. It follows that $\skalp{\yba,z'}=\skalp{n_k^\ast,z'}=\skalp{n_v^\ast,z'}=0$ and hence $\skalp{v^\ast,z'}=\skalp{z^\ast,z'}=0$, which implies that
 \[T_\Gamma(\yb)\cap[\yba+tw^\ast]^\perp\subset T_\Gamma(\yb)\cap[\yba]^\perp\cap[v^\ast]^\perp\cap[z^\ast]^\perp
 =\Kb\cap[v^\ast]^\perp\cap[z^\ast]^\perp=F_1.\]
   Conversely, by virtue of  $[\yba]^\perp\cap[v^\ast]^\perp\cap[z^\ast]^\perp\subset[\yba]^\perp\cap[w^\ast]^\perp\subset[\yba+tw^\ast]^\perp$ we obtain
\begin{eqnarray*}T_\Gamma(\yb)\cap[\yba]^\perp\cap[v^\ast]^\perp\cap[z^\ast]^\perp&\subset& T_\Gamma(\yb)\cap[\yba+tw^\ast]^\perp,
\end{eqnarray*}
so that $F_1= \Kb\cap[v^\ast]^\perp\cap[z^\ast]^\perp=T_\Gamma(\yb)\cap[\yba+tw^\ast]^\perp$. By using Lemma \ref{LemPolyAux1}(i) we obtain that \[F_1-F_2=(T_\Gamma(\yb)\cap[\yba+tw^\ast]^\perp)+[w]=(T_\Gamma(\yb)\cap[\yba+tw^\ast]^\perp)+[tw]=K_\Gamma(\yb+tw,\yba+tw^\ast)\]
has the required form.
\end{proof}
}\fi

\section{Calmness of implicit multifunctions}
This section is divided into two parts. In the first one we prove that the calmness of $F$ is ensured by the two conditions imposed on $M$, mentioned already in the Introduction. In contrast to the remainder of the paper,  Subsection \ref{SubSec3.1} is formulated in the setting of general metric spaces. Subsection \ref{SubSec3.2} is then focused on the question how these two conditions can be verified by using the tools of  variational analysis.

\subsection{\label{SubSec3.1}General theory}
Let $P,X,Y$ be metric spaces.
With respect to this general setting we will analyze now, instead of \eqref{eq-111}, the multifunction
\begin{equation}\label{eq-112}
S(p):= \{x \in X | \yb \in M(p,x)\},
\end{equation}
where $M:P\times X\rightrightarrows Y$ is a given multifunction and $\bar{y}$ is a given element of $Y$.

In \cite[Lemma 1]{HO} the authors considered the special case
\begin{equation}
  \label{EqConstrSyst}M(p,x)=G(p,x)-D,
\end{equation}
where the function $G:P\times X\to Y$ is Lipschitz near the reference pair $(\pb,\xb)$ and $D$ is a closed subset of $Y$. When $P,X,Y$ are normed spaces, it has been shown therein that the calmness of the respective multifunction $S$ at $(\pb,\xb)$ is implied by the metric subregularity of the (simpler) mapping $M_{\pb}:X\rightrightarrows Y$ defined by
\[M_{\pb}(x):=M(\pb,x)=G(\pb,x)-D\]
at $(\xb,0)$.

Next we present a deep generalization of this result which is valid even in our general setting in metric spaces and in which the structural assumption \eqref{EqConstrSyst} is abandoned.   We associate with $M$ the multifunctions $H_M:P\rightrightarrows X\times Y$ and $M_p:X\rightrightarrows Y$ defined by
\begin{equation}\label{EqH_M}
 H_M(p):=\{(x,y)\mv y\in M(p,x)\}
\end{equation}
and
\begin{equation}\label{EqM_p}
M_p(x):=\{y\mv y\in M(p,x)\}\ \mbox{ for each $p\in P$.}
\end{equation}
Note that $\Gr H_M=\Gr M$.
The following auxiliary notion will be crucial for our analysis.

\begin{definition}
  Let $(\pb,\xb,\yb)\in\Gr M$. We say that $M$ has  the {\em  restricted calmness property with respect to $p$  at $(\pb,\xb,\yb)$}, if there are reals $L\geq 0$  and $\epsilon>0$ such that
  \begin{equation}\label{EqRPClmP}
    \distSp{X\times Y}{(x,\yb),H_M(\pb)}\leq L\rho_P(p,\pb)\mbox{ provided  $\rho_P(p,\pb)<\varepsilon$, $\rho_X(x,\xb)<\varepsilon$ and $(x,\yb)\in H_M(p)$.}
  \end{equation}
\end{definition}

\begin{remark}
It is easy to see that $M$ has the restricted calmness property with respect to $p$  at $(\pb,\xb,\yb)$, if $H_M$ is calm at $(\pb,(\xb,\yb))$. In particular, in the setting of normed spaces this condition is fulfilled for multifunctions of the form $M(p,x)=G(p,x)+Q(x)$, where $G$ is a Lipschitz continuous function and $Q$ is set-valued.
\end{remark}

The following lemma states that the restricted calmness property with respect to $p$  is necessary for the calmness of the solution mapping $S$.

\begin{lemma}If $S$ is calm at $(\pb,\xb)$, then $M$ has the restricted calmness property with respect to $p$  at $(\pb,\xb,\yb)$.
\end{lemma}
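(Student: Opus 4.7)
The plan is to unwind the definitions and exploit the simple observation that the slice $S(\pb)\times\{\yb\}$ sits inside $H_M(\pb)$, so distance to $H_M(\pb)$ in the product metric is controlled by distance to $S(\pb)$ in $X$.

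First I would translate the hypothesis and conclusion into their defining inequalities. Calmness of $S$ at $(\pb,\xb)$ provides constants $\kappa\geq 0$ and $\varepsilon>0$ such that
\[\distSp{X}{x,S(\pb)}\leq \kappa\,\rho_P(p,\pb)\]
whenever $\rho_P(p,\pb)<\varepsilon$, $\rho_X(x,\xb)<\varepsilon$ and $x\in S(p)$. The goal is to produce constants $L\geq 0$ and $\varepsilon'>0$ such that \eqref{EqRPClmP} holds.

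Next I would make the key identification: by the definitions \eqref{eq-112} and \eqref{EqH_M}, the condition $(x,\yb)\in H_M(p)$ is literally the same as $x\in S(p)$. Hence, if $(x,\yb)\in H_M(p)$ with $\rho_P(p,\pb)<\varepsilon$ and $\rho_X(x,\xb)<\varepsilon$, the calmness of $S$ yields $\distSp{X}{x,S(\pb)}\leq \kappa\,\rho_P(p,\pb)$.

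The crucial geometric observation is that for every $x'\in S(\pb)$ we have $\yb\in M(\pb,x')$, i.e.\ $(x',\yb)\in H_M(\pb)$. Consequently, using the Euclidean product metric on $X\times Y$,
\[\distSp{X\times Y}{(x,\yb),H_M(\pb)}\leq \inf_{x'\in S(\pb)}\rho_{X\times Y}\bigl((x,\yb),(x',\yb)\bigr)=\inf_{x'\in S(\pb)}\rho_X(x,x')=\distSp{X}{x,S(\pb)}.\]
Chaining the two estimates with $L:=\kappa$ and $\varepsilon':=\varepsilon$ delivers the restricted calmness inequality, so there is really no serious obstacle here; the only subtlety is noticing that the $Y$-component of the distance drops out because $\yb$ is held fixed on both sides.
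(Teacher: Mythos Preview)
Your proof is correct and follows essentially the same route as the paper's: both exploit the inclusion $S(\pb)\times\{\yb\}\subset H_M(\pb)$ to bound the product-space distance by $\distSp{X}{x,S(\pb)}$, then invoke the calmness of $S$. The only cosmetic difference is that the paper approximates the infimum by choosing $\tilde x\in S(\pb)$ with $\rho_X(x,\tilde x)\leq \distSp{X}{x,S(\pb)}+\alpha$ and lets $\alpha\downarrow 0$, whereas you compare the two infima directly; your formulation is a bit cleaner and avoids the auxiliary $\alpha$.
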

\begin{proof}
  According to the definition of calmness we choose   reals $L\geq 0$ and $\epsilon>0$ such that
  \[\distSp{X}{x,S(\pb)}\leq L\rho_P(p,\pb)\mbox{ provided $p\in\B_P(\pb,\varepsilon)$ and $x\in S(p)\cap \B_X(\xb,\varepsilon)$}.\]
  Next, for every $p\in \B_P(\pb,\epsilon)$ and every $(x,\yb)\in H_M(p)$ with $x\in \B_X(\xb,\varepsilon)$ we have $x\in S(p)\cap \B_X(\xb,\varepsilon)$. Clearly, for each $\alpha>0$ there is some $\tilde x\in S(\pb)$ with $\rho_X(x,\tilde x)\leq \distSp{X}{x,S(\pb)}+\alpha$ and so it follows that $\rho_X(x,\tilde x)\leq L\rho_P(p,\pb)+\alpha$. Note that $(\tilde x,\yb)\in H_M(\pb)$, whence
  \[\distSp{X\times Y}{(x,\yb),H_M(\pb)}\leq \rho_X(x,\tilde x)\leq L\rho_P(p,\pb)+\alpha.\]
  Since a suitable point $\tilde x$ can be found for any arbitrary small $\alpha$, one can conclude that
  \[\distSp{X\times Y}{(x,\yb),H_M(\pb)}\leq L\rho_P(p,\pb),\]
  which amounts to the restricted calmness property with respect to $p$  of $M$ at $(\pb,\xb,\yb)$.
\end{proof}

We state now a sufficient criterion for the calmness of $S$.

\begin{theorem}\label{ThGenClm}
  Let $\yb\in M(\pb,\xb)$, assume that $M$ has the restricted calmness  property with respect to $p$  at $(\pb,\xb,\yb)$ and  $M_{\pb}$ is metrically subregular at $(\xb,\yb)$. Then $S$ is calm at $(\pb,\xb)$.
\end{theorem}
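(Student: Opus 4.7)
The plan is to combine the two hypotheses by using a point in $H_M(\pb)$ that approximates the distance appearing in the restricted calmness inequality, and then transfer from that point to $S(\pb)$ via metric subregularity of $M_{\pb}$.

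First, I would fix $L, \varepsilon_1 > 0$ from the restricted calmness property and $\kappa, \varepsilon_2 > 0$ from the metric subregularity of $M_{\pb}$, and choose $\varepsilon \in (0, \min\{\varepsilon_1, \varepsilon_2\})$ small enough so that $\varepsilon + L\varepsilon < \varepsilon_2$. Now pick any $p \in P$ and $x \in X$ with $\rho_P(p,\pb) < \varepsilon$, $\rho_X(x,\xb) < \varepsilon$ and $x \in S(p)$. By definition of $S$, this means $\yb \in M(p,x)$, i.e.\ $(x,\yb) \in H_M(p)$, so the restricted calmness property applies and yields $\distSp{X\times Y}{(x,\yb),H_M(\pb)} \le L\rho_P(p,\pb)$.

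Next, for any $\alpha > 0$, I would select $(x',y') \in H_M(\pb)$ with
\[
\sqrt{\rho_X(x,x')^2 + \rho_Y(\yb,y')^2} \le L\rho_P(p,\pb) + \alpha .
\]
This gives simultaneously $\rho_X(x,x') \le L\rho_P(p,\pb) + \alpha$ and $\rho_Y(\yb,y') \le L\rho_P(p,\pb) + \alpha$. Since $(x',y') \in H_M(\pb)$ means $y' \in M_{\pb}(x')$, the second inequality implies $\distSp{Y}{\yb,M_{\pb}(x')} \le L\rho_P(p,\pb) + \alpha$. From the first inequality and the triangle inequality, $\rho_X(x',\xb) \le \varepsilon + L\varepsilon + \alpha$, which for $\alpha$ small enough is $<\varepsilon_2$, placing $x'$ in the neighborhood where metric subregularity of $M_{\pb}$ holds.

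I would then apply metric subregularity of $M_{\pb}$ at $(\xb,\yb)$, noting the key identification $M_{\pb}^{-1}(\yb) = \{x \mv \yb \in M(\pb,x)\} = S(\pb)$, to obtain
\[
\distSp{X}{x',S(\pb)} = \distSp{X}{x',M_{\pb}^{-1}(\yb)} \le \kappa\, \distSp{Y}{\yb,M_{\pb}(x')} \le \kappa\bigl(L\rho_P(p,\pb) + \alpha\bigr).
\]
A final triangle inequality gives
\[
\distSp{X}{x,S(\pb)} \le \rho_X(x,x') + \distSp{X}{x',S(\pb)} \le (1+\kappa)\bigl(L\rho_P(p,\pb) + \alpha\bigr),
\]
and letting $\alpha \searrow 0$ yields $\distSp{X}{x,S(\pb)} \le (1+\kappa)L\,\rho_P(p,\pb)$, which is exactly the calmness estimate with modulus $(1+\kappa)L$.

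The proof is essentially bookkeeping; the only subtlety is the interplay of the two neighborhoods. The potential obstacle is ensuring that the auxiliary point $x'$ from the restricted calmness property still lies in the ball where $M_{\pb}$ is metrically subregular; this is handled by shrinking $\varepsilon$ a priori so that the displacement $\rho_X(x,x') \le L\rho_P(p,\pb)+\alpha$ cannot push $x'$ out of $\B_X(\xb,\varepsilon_2)$. No further structure of $X,Y,P$ beyond the metric-space setting is required.
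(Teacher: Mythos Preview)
Your proof is correct and follows essentially the same route as the paper: pick an approximate closest point $(x',y')\in H_M(\pb)$ via restricted calmness, check that $x'$ stays in the subregularity neighborhood, and then pass to $S(\pb)=M_{\pb}^{-1}(\yb)$ via metric subregularity. The only cosmetic difference is that the paper combines $\rho_X(x,x')$ and $\kappa\,\rho_Y(\yb,y')$ through Cauchy--Schwarz to obtain the slightly sharper modulus $\sqrt{1+\kappa^2}\,L$ instead of your $(1+\kappa)L$, which is immaterial for calmness.
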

\begin{proof}
  By virtue of the restricted calmness property with respect to $p$  of $M$ at $(\bar{p},\bar{x},\bar{y}$)  we can find moduli $L$ and $\kappa$ along with some radii $r_p,r_x,\sigma>0$ such that
\begin{equation}
    \label{EqRPClmP1}
    \distSp{X\times Y}{(x,\yb),H_M(\pb)}\leq L\rho_P(p,\pb)\mbox{ provided  $p\in\B_P(\pb,r_p)$, $x\in\B_X(\xb,r_x)$ and $(x,\yb)\in H_M(p)$}
\end{equation}
and, by the metric subregularity of $M_{\bar{p}}$ at $(\bar{x}, \bar{y})$, one has
\begin{equation}
  \label{EqSubregRestrCalm}
  \distSp{X}{x,M_{\pb}^{-1}(\yb)}\leq\kappa \distSp{Y}{\yb,M_{\pb}(x)}\mbox{ provided $x\in \B_X(\xb,\sigma)$.}
\end{equation}
By decreasing the radii $r_p$ and $r_x$ if necessary we can assume $r_x+Lr_p<\sigma$. Now fix $p\in \B_P(\pb,r_p)$ and consider $x\in S(p)\cap \B_X(\xb,r_x)$ so that $(x,\yb)\in H_M(p)\cap(\B_X(\xb,r_x)\times\{\yb\})$. Further observe that for each $\beta>0$ there is some $(\tilde x,\tilde y)\in H_M(\pb)$ satisfying
\begin{equation}\label{EqAuxDist}
  \rho_{X\times Y}((x,\yb),(\tilde x,\tilde y))\leq \distSp{X\times Y}{(x,\yb), H_M(\pb)}+\beta.
\end{equation}
Consequently, by virtue of \eqref{EqRPClmP1}
\begin{equation}\label{EqAuxDist1}
  \rho_{X\times Y}((x,\yb),(\tilde x,\tilde y))\leq L\rho_P(p,\pb)+\beta.
\end{equation}
It follows from the triangle inequality and \eqref{EqAuxDist1} that
\[\rho_X(\tilde x,\xb)\leq\rho_X(\tilde x,x)+\rho_X(x,\xb)\leq L\rho_P(p,\pb)+\beta+\rho_X(x,\xb)\leq Lr_p+\beta+r_x.\]
Since $Lr_p+r_x<\sigma$, $\beta$ can be chosen sufficiently small to obtain that $\rho_X(\tilde x,\xb)<\sigma$ as well. Further we note that to each $\gamma>0$ there is some $\hat x\in S(\pb)=M_{\pb}^{-1}(\yb)$ such that
\[\rho_X(\tilde x,\hat x)\leq \distSp{X}{\tilde x,M_{\pb}^{-1}(\yb)}+\gamma.\]
This implies, by virtue of \eqref{EqSubregRestrCalm}, that
\begin{equation}
  \label{EqAuxDist2} \rho_X(\tilde x,\hat x)\leq \kappa \distSp{Y}{\yb,M_{\pb}(\tilde x)}+\gamma\leq \kappa\rho_Y(\yb,\tilde y)+\gamma,
\end{equation}
where the last inequality follows from the fact that $\tilde y\in M_{\pb}(\tilde x)$.

Hence, by using successively the triangle inequality, estimate \eqref{EqAuxDist2}, the Cauchy-Schwartz inequality and estimates \eqref{EqAuxDist1}, \eqref{EqRPClmP1}, we obtain
\begin{eqnarray*}
\distSp{X}{x,S(\pb)}&\leq& \rho_X(x,\hat x)\leq \rho_X(x,\tilde x)+\rho_X(\tilde x,\hat x)\leq \rho(x,\tilde x)+\kappa\rho_Y(\yb,\tilde y)+\gamma\\
&\leq&\sqrt{1+\kappa^2}\sqrt{(\rho_X(x,\tilde x))^2+(\rho_Y(\yb,\tilde y))^2}+\gamma=\sqrt{1+\kappa^2}\rho_{X\times Y}((x,\yb),(\tilde x,\tilde y))+\gamma\\
&\leq&\sqrt{1+\kappa^2}(L\rho_P(p,\pb)+\beta)+\gamma.
\end{eqnarray*}
It remains to notice again that suitable points $(\tilde x,\tilde y)$ and $\hat x$ can be found for arbitrarily small values of $\beta$ and $\gamma$ whence
\[\distSp{X}{x,S(\pb)}\leq \sqrt{1+\kappa^2}L\rho_P(p,\pb).\]
Since $p\in\B_P(\pb,r_p)$ was arbitrarily fixed, the claimed calmness of $S$ at $(\pb,\xb)$ follows.
\end{proof}

\begin{remark}
  Note that the restricted calmness property with respect to $p$  of $M$  is strictly less stringent than condition (3.3) in \cite[Theorem 3.1]{K3}. This condition was used there together with a sufficient condition for metric subregularity of $\M_{\pb}$ to show the calmness of $S$.
\end{remark}


\begin{corollary}
Consider the implicit multifunction $S$ given by (\ref{eq-112}) with $\yb=0$ and
\[
M(p,x)= A(p)x+b(p)+Q(x)
\]
around the reference point $(\pb ,\xb )\in \Gr S$. Assume that the mappings $A: \mathbb{R}^{l} \to \mathbb{R}^{m \times n}$ and $b: \mathbb{R}^{l}\rightarrow \mathbb{R}^{m}$ are Lipschitz near $\pb $ and the graph of $Q: \mathbb{R}^{n} \rightrightarrows \mathbb{R}^{m}$ is a union of finitely many convex polyhedra. Then $S$ is calm at  $(\pb ,\xb ) $.
\end{corollary}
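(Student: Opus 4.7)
The strategy is to apply Theorem \ref{ThGenClm} with $\yb=0$, which reduces the claim to verifying two things at the reference point: (a) the metric subregularity of $M_{\pb}$ at $(\xb,0)$, and (b) the restricted calmness of $M$ with respect to $p$ at $(\pb,\xb,0)$.

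For (a), I would observe that $\Gr M_{\pb}=\{(x,y)\in\R^n\times\R^m\mv (x,\,y-A(\pb)x-b(\pb))\in\Gr Q\}$, so $\Gr M_{\pb}$ is the image of $\Gr Q$ under the affine bijection $(x,z)\mapsto(x,z+A(\pb)x+b(\pb))$ of $\R^n\times\R^m$. Since the image of a finite union of convex polyhedra under an affine bijection is again such a union, $M_{\pb}$ is a polyhedral multifunction. Robinson's classical upper Lipschitz theorem for polyhedral multifunctions then yields that $M_{\pb}$ is metrically subregular at every point of its graph, in particular at $(\xb,0)$.

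For (b), consider $p$ near $\pb$ and $x$ near $\xb$ with $(x,0)\in H_M(p)$, which by definition amounts to $-A(p)x-b(p)\in Q(x)$. I would construct an explicit point of $H_M(\pb)$ close to $(x,0)$ by setting $y':=(A(\pb)-A(p))x+b(\pb)-b(p)$. A direct substitution gives $y'-A(\pb)x-b(\pb)=-A(p)x-b(p)\in Q(x)$, hence $(x,y')\in H_M(\pb)$. Using the Lipschitz continuity of $A$ and $b$ on a neighborhood of $\pb$ together with the local boundedness of $\norm{x}$ near $\xb$, one obtains $\norm{y'}\leq L\norm{p-\pb}$ for some constant $L\geq 0$, so that $\distSp{X\times Y}{(x,0),H_M(\pb)}\leq L\norm{p-\pb}$. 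This is exactly the restricted calmness property at $(\pb,\xb,0)$.

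With both hypotheses of Theorem \ref{ThGenClm} established, the calmness of $S$ at $(\pb,\xb)$ follows. The only slightly delicate step is the explicit construction of $(x,y')$ in (b); once this candidate is in hand, the Lipschitz estimates on $A$ and $b$ are routine, and the polyhedrality reduction in (a) is standard. No further obstacle is expected.
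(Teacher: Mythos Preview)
Your proof is correct and follows essentially the same route as the paper's: verify the two hypotheses of Theorem~\ref{ThGenClm}, namely metric subregularity of $M_{\pb}$ via Robinson's polyhedrality result and the restricted calmness property with respect to $p$. The paper compresses step~(b) into a citation of Remark~1, whereas you spell out the explicit construction of $(x,y')\in H_M(\pb)$; likewise the paper simply asserts that $M_{\pb}$ is polyhedral, while you give the affine-bijection argument --- but these are just expansions of the same reasoning, not a different approach.
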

\begin{proof}
As mentioned in Remark 1, such mapping $M$ has the restricted calmness property with respect to $p$  at  $(\pb ,\xb ,0)$. Furthermore, $M_{\pb }$ is polyhedral and hence metrically subregular at $(\xb ,0)$ by virtue of \cite[Proposition 1]{Ro81}. The statement thus directly follows from Theorem \ref{ThGenClm}.
\end{proof}

In the above way one can model solution maps to parameterized quadratic programs with the parameter arising in the objective.

Next section is devoted to workable conditions ensuring that the assumptions of Theorem \ref{ThGenClm} are fulfilled.

\subsection{\label{SubSec3.2}Calmness criteria}

The next theorem states a sufficient condition for  the restricted calmness property with respect to $p$  of $M$ based on generalized differentiation. From now on  $P=\mathbb{R}^{l}$, $X = \mathbb{R}^{n}$, $Y = \mathbb{R}^{m}$ and $\yb=0$.

\begin{theorem}\label{ThRPCP}
Let $0\in M(\pb,\xb)$ and assume that there do not exist elements $u\in \Sp_{\R^n}$, $q^\ast\in\Sp_{\R^l}$ and sequences $(q_k,u_k,v_k)\to(0,u,0)$, $(q_k^\ast,u_k^\ast, v_k^\ast)\to (q^\ast,0,0)$, $t_k\searrow 0$ such that
\begin{eqnarray}
\label{EqRPCPCoDeriv}  &&(q_k^\ast,u_k^\ast)\in \hat D^\ast M((\pb,\xb,0)+t_k(q_k,u_k,v_k))(v_k^\ast)\ \forall k,\\
\label{EqRPCPAlign}  &&q_k\not=0\ \forall k,\quad \lim_{k\to\infty}\skalp{q_k^\ast,\frac{q_k}{\norm{q_k}}}=-1.
\end{eqnarray}
Then $M$ has the restricted calmness property with respect to $p$  at $(\pb,\xb,0)$.
\end{theorem}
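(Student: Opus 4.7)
I will argue by contradiction via Ekeland's variational principle. Assume that $M$ does not have the restricted calmness property with respect to $p$ at $(\pb,\xb,0)$. Then one extracts sequences $p_k\to\pb$ and $x_k\to\xb$ with $(x_k,0)\in H_M(p_k)$ and
\[d_k:=\distSp{X\times Y}{(x_k,0),H_M(\pb)}>k\alpha_k,\quad \alpha_k:=\norm{p_k-\pb}.\]
Necessarily $\alpha_k>0$, and since $(\xb,0)\in H_M(\pb)$ one also has $d_k\leq\norm{x_k-\xb}\to 0$.

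Apply Ekeland's variational principle to the continuous function $f(p,x,y):=\norm{p-\pb}$ on the closed set $\Gr M\subset\R^l\times\R^n\times\R^m$ at the point $(p_k,x_k,0)$ with parameter $\lambda_k:=\sqrt{\alpha_kd_k}$. Since $f(p_k,x_k,0)=\alpha_k$ and $\inf_{\Gr M}f=0$, this produces $(p_k',x_k',y_k')\in\Gr M$ with $\norm{p_k'-\pb}\leq\alpha_k$, $\norm{(p_k'-p_k,x_k'-x_k,y_k')}\leq\lambda_k$, and $(p_k',x_k',y_k')$ minimizing $f(\cdot)+(\alpha_k/\lambda_k)\norm{\cdot-(p_k',x_k',y_k')}$ over $\Gr M$. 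The assumption $p_k'=\pb$ would put $(x_k',y_k')\in H_M(\pb)$ and force $\lambda_k^2\geq\alpha_k^2+d_k^2\geq 2\alpha_k d_k>\alpha_kd_k=\lambda_k^2$, contradiction. Hence $p_k'\neq\pb$, $f$ is smooth at $(p_k',x_k',y_k')$ with gradient $\bigl((p_k'-\pb)/\norm{p_k'-\pb},0,0\bigr)$, and the generalized Fermat rule yields perturbations $(e_k^p,e_k^x,e_k^y)$ of norm at most $\alpha_k/\lambda_k$ together with an element $(a_k,b_k,c_k)\in\hat N_{\Gr M}(p_k',x_k',y_k')$ whose sum with $\nabla f(p_k',x_k',y_k')$ vanishes.

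Set $t_k:=\norm{(p_k'-\pb,x_k'-\xb,y_k')}$, $(q_k,u_k,v_k):=(p_k'-\pb,x_k'-\xb,y_k')/t_k$, and $(q_k^\ast,u_k^\ast,v_k^\ast):=(a_k,b_k,-c_k)$, so that $(q_k^\ast,u_k^\ast)\in\hat D^\ast M((\pb,\xb,0)+t_k(q_k,u_k,v_k))(v_k^\ast)$. From $\norm{x_k'-\xb}\geq\norm{x_k-\xb}-\lambda_k\geq d_k-\sqrt{\alpha_kd_k}$ we obtain $t_k\geq d_k/2$ for large $k$, hence $\norm{q_k}\leq 2\alpha_k/d_k<2/k\to 0$ and $\norm{v_k}\leq 2\sqrt{\alpha_k/d_k}\to 0$; therefore $\norm{u_k}\to 1$ and, along a subsequence, $u_k\to u\in\Sp_{\R^n}$. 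Simultaneously $\norm{u_k^\ast},\norm{v_k^\ast}\leq\alpha_k/\lambda_k=\sqrt{\alpha_k/d_k}\to 0$, while $q_k^\ast=-(p_k'-\pb)/\norm{p_k'-\pb}-e_k^p$ has norm tending to $1$ and, along a further subsequence, converges to some $q^\ast\in\Sp_{\R^l}$. Since $q_k/\norm{q_k}=(p_k'-\pb)/\norm{p_k'-\pb}$, the alignment $\skalp{q_k^\ast,q_k/\norm{q_k}}=-1-\skalp{e_k^p,q_k/\norm{q_k}}\to-1$ follows, and $q_k\neq 0$ holds because $p_k'\neq\pb$; this contradicts the hypothesis. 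The crux is the balancing choice $\lambda_k=\sqrt{\alpha_kd_k}$, the geometric mean of the two natural scales $\alpha_k<d_k$: it must be small enough that the Ekeland penalty $\alpha_k/\lambda_k$ vanishes (forcing $u_k^\ast,v_k^\ast\to 0$), strictly smaller than $d_k$ (ruling out $p_k'=\pb$ and guaranteeing $\norm{x_k'-\xb}\gtrsim d_k$, so that $u\in\Sp_{\R^n}$), and yet large enough in comparison with $\norm{y_k'}$ that $\norm{v_k}\to 0$.
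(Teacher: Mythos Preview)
Your argument follows the same contraposition--plus--variational strategy as the paper's proof, but with a different implementation: you apply Ekeland's principle to $f(p,x,y)=\|p-\pb\|$ with the balanced radius $\lambda_k=\sqrt{\alpha_k d_k}$, whereas the paper takes a global minimiser of the weighted penalty $\|p-\pb\|+\tfrac{2}{k}\|x-x_k\|+\tfrac{1}{\sqrt{k}}\|y\|$ over $\Gr M$. Both devices achieve the same effect (making the $p$-component negligible relative to the $x$-component so that $q_k\to0$, $v_k\to0$, while $u_k$ stays on the sphere), and your explanation of why the geometric mean is the correct scale is clean.

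There is, however, a gap at the Fermat step. From the Ekeland conclusion you only know that $(p_k',x_k',y_k')$ minimises the nonsmooth function $f+\tfrac{\alpha_k}{\lambda_k}\|\cdot-(p_k',x_k',y_k')\|$ over $\Gr M$; since the penalty term is not differentiable at the minimiser, the elementary rule yields only that $-\nabla f(p_k',x_k',y_k')$ is an $(\alpha_k/\lambda_k)$-\emph{approximate} regular normal to $\Gr M$ there, and in general $\hat N_C^\epsilon(\bar z)\not\subset \hat N_C(\bar z)+\epsilon\B$ when $T_C(\bar z)$ is nonconvex. The standard sum rule gives a \emph{limiting} normal $(a_k,b_k,c_k)\in N_{\Gr M}(p_k',x_k',y_k')$, not an element of $\hat N_{\Gr M}$ as you claim. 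The repair is routine and is precisely what the paper does: invoke the definition of the limiting normal cone to pass to a nearby point $(p_k'',x_k'',y_k'')\in\Gr M$ (within distance $\|p_k'-\pb\|/k$, say) carrying a true regular normal within $1/k$ of $(a_k,b_k,c_k)$, redefine $(q_k,u_k,v_k)$ and $(q_k^\ast,u_k^\ast,v_k^\ast)$ using this point, and add the short estimate
\[
\Bigl\|\tfrac{p_k''-\pb}{\|p_k''-\pb\|}-\tfrac{p_k'-\pb}{\|p_k'-\pb\|}\Bigr\|\leq \tfrac{2\|p_k''-p_k'\|}{\|p_k'-\pb\|}
\]
to preserve the alignment condition \eqref{EqRPCPAlign}. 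With that patch your proof is complete.
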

\begin{proof}
By contraposition. Assume on the contrary that $M$ does not have the restricted calmness property  with respect to $p$  at $(\pb,\xb,0)$.
Then there are sequences $(p_k,x_k)\to(\pb,\xb)$ such that $(x_k,0)\in H_M(p_k)$ and $\norm{x_k-\xb}\geq \dist{(x_k,0),H_M(\pb)}>k\norm{p_k-\pb}$. Next we denote by  $(\pb_k,\xb_k,\yb_k)$ for each $k$ a global solution of the program
\[\min_{p,x,y}\phi_k(p,x,y):=\norm{p-\pb}+\frac 2k\norm{x-x_k}+\frac1 {\sqrt{k}}
\norm{y}\quad\mbox{subject to}\quad (p,x,y)\in\Gr M.\]
Then we must have $\pb_k\not=\pb$ since otherwise we would obtain
\begin{eqnarray*}\frac 1k \dist{(x_k,0),H_M(\pb)}&\leq& \frac 1k(\norm{(x_k-\xb_k}+\norm{\yb_k})\leq \frac 2k\norm{\xb_k-x_k}+\frac1{\sqrt{k}}\norm{\yb_k}\\
&=&\phi_k(\pb,\xb_k,\yb_k)=\phi_k(\pb_k,\xb_k,\yb_k)\leq \phi_k(p_k,x_k,0)=\norm{p_k-\pb}
\end{eqnarray*}
contradicting $\dist{(x_k,0),H_M(\pb)}>k\norm{p_k-\pb}$. Hence $t_k:=\norm{(\pb_k,\xb_k,\yb_k)-(\pb,\xb,0)}>0$ and by passing to a subsequence if necessary we can assume that $((\pb_k,\xb_k,\yb_k)-(\pb,\xb,0))/t_k$ converges to some element $(q,u,v)\in\Sp_{\R^l\times\R^n\times\R^m}$. Since $\phi_k(\pb_k,\xb_k,\yb_k)\leq \phi_k(p_k,x_k,0)=\norm{p_k-\pb}$, we can conclude $\norm{\pb_k-\pb}\leq \norm{p_k-\pb}$, $\frac 2k\norm{\xb_k-x_k}\leq \norm{p_k-\pb}$ and $\frac1{\sqrt k}\norm{\yb_k}\leq \norm{p_k-\pb}$, yielding, together with $\norm{p_k-\pb}<\frac 1k\norm{x_k-\xb}$, the relations
 \[\norm{\xb_k-\xb}\geq \norm{x_k-\xb}-\norm{\xb_k-\xb}\geq \norm{x_k-\xb}-\frac k2\norm{p_k-\pb}> \frac 12\norm{x_k-\xb}> \frac k2\norm{p_k-\pb}\geq \frac k2\norm{\pb_k-\pb}\]
and $\norm{\yb_k}\leq \sqrt{k}\norm{p_k-\pb}<\frac 1{\sqrt{k}}\norm{x_k-\xb}<\frac 2{\sqrt{k}}\norm{\xb_k-\xb}$. Hence we can conclude $\norm{\pb_k-\pb}/\norm{\xb_k-\xb}\to 0$, $\norm{\yb_k}/\norm{\xb_k-\xb}\to 0$ and $q=0$, $v=0$ follows. Since
\[\norm{\xb_k-\xb}\leq \norm{x_k-\xb}+\norm{\xb_k-\xb}\leq \norm{x_k-\xb}+\frac k2\norm{p_k-\pb}< \frac 32\norm{x_k-\xb}\to 0,\]
it also follows that $t_k\searrow 0$.\\
Next we utilize the optimality condition $0\in\partial \phi_k(\pb_k,\xb_k,\yb_k)+N_{\Gr M}(\pb_k,\xb_k,\yb_k)$, see \cite[Theorem 8.15]{RoWe98}, where $\partial \phi_k$ can be taken as the subdifferential of convex analysis since $\phi_k$ is convex. Let $(\alpha_k^\ast,\beta_k^\ast,\gamma_k^\ast)\in(-\partial \phi_k(\pb_k,\xb_k,\yb_k))\cap N_{\Gr M}(\pb_k,\xb_k,\yb_k)$. Then, standard arguments from convex analysis yield $\alpha_k^\ast=-(\pb_k-\pb)/\norm{\pb_k-\pb}$ 
 and
$\beta_k^\ast\in\frac 2k \B_{\R^n}$, $\gamma_k^\ast\in\frac 1{\sqrt{k}}\B_{\R^m}$
and we deduce $(\beta_k^\ast,\gamma_k^\ast)\to (0,0)$ as $k\to\infty$. By the definition of the limiting normal cone we can find for each $k$ some elements $(\pb_k',\xb_k',\yb_k')$ and $(q_k^\ast,u_k^\ast,-v_k^\ast)\in \hat N_{\Gr M}(\pb_k',\xb_k',\yb_k')$ verifying
\[\norm{(\pb_k',\xb_k',\yb_k')-(\pb_k,\xb_k,\yb_k)}\leq \norm{\pb_k-\pb}/k,\quad \norm{(q_k^\ast,u_k^\ast,-v_k^\ast)-(\alpha_k^\ast,\beta_k^\ast,\gamma_k^\ast)}\leq \frac 1k.\]
Then we obviously have $(u_k^\ast,v_k^\ast)\to (0,0)$.
Since $\alpha_k^\ast\in\Sp_{\R^l}$ $\forall k$ and $q_k^\ast-\alpha_k^\ast\to0$, by  passing to a subsequence if necessary we can assume that $q_k^\ast$ converges to some $q^\ast\in\Sp_{\R^l}$. Now set $(q_k,u_k,v_k):=((\pb_k',\xb_k',\yb_k')-(\pb,\xb,\yb))/t_k$. Since $\norm{\pb_k-\pb}\leq t_k$, the choice of $(\pb_k',\xb_k',\yb_k')$ guarantees that $(q_k,u_k,v_k)$ converges to $(0,u,0)$.
Further we have
\begin{eqnarray*}\left\Vert \frac{\pb_k-\pb}{\norm{\pb_k-\pb}}-\frac{\pb_k'-\pb}{\norm{\pb_k'-\pb}}\right\Vert&=&
\left\Vert\frac{(\pb_k'-\pb)(\norm{\pb_k'-\pb}-\norm{\pb_k-\pb})-(\pb_k'-\pb_k)\norm{\pb_k'-\pb}}{\norm{\pb_k'-\pb}\cdot\norm{\pb_k-\pb}}\right\Vert\\
&\leq& \frac{2\norm{\pb_k'-\pb_k}\cdot\norm{\pb_k'-\pb}}{\norm{\pb_k'-\pb}\cdot\norm{\pb_k-\pb}}\leq \frac 2k
\end{eqnarray*}
implying
\[\lim_{k\to\infty}\skalp{q_k^\ast,\frac{q_k}{\norm{q_k}}}=\lim_{k\to\infty}\skalp{q_k^\ast,\frac{\pb_k'-\pb}{\norm{\pb_k'-\pb}}}
=\lim_{k\to\infty}\skalp{q_k^\ast,\frac{\pb_k-\pb}{\norm{\pb_k-\pb}}}=\lim_{k\to\infty}\skalp{\alpha_k^\ast,\frac{\pb_k-\pb}{\norm{\pb_k-\pb}}}=-1.\]
Finally note that we have $(q_k^\ast,u_k^\ast)\in\hat D^\ast M((\pb,\xb,0)+t_k(q_k,u_k,v_k))(v_k^\ast)$ by our construction. Hence we see that $q^\ast,u$ together with the sequences $t_k$, $(q_k,u_k,v_k)$ and $(q_k^\ast,u_k^\ast,v_k^\ast)$ violate the assumptions of the theorem yielding the desired contradiction. This finishes the proof.
\end{proof}

Condition \eqref{EqRPCPCoDeriv} suggests the following definition.
\begin{definition}The  {\em outer coderivative of $M$ with respect to $p$ in direction $u$ at $(\pb,\xb,0)$}  is
the multifunction $D^\ast_{>_p}M((\pb,\xb,0);u):\R^m\rightrightarrows\R^l\times\R^n$, where  $D^\ast_{>_p}M((\pb,\xb,0);u)(v^\ast)$ consists of all pairs $(q^\ast,u^\ast)$ such that there are sequences $t_k\searrow 0$, $(q_k,u_k,v_k)\to(0,u,0)$, $(q_k^\ast,u_k^\ast,v_k^\ast)\to (q^\ast,u^\ast,v^\ast)$ verifying
\[ q_k\not=0\quad\mbox{and}\quad(q_k^\ast,u_k^\ast,-v_k^\ast)\in\widehat N_{\Gr M}(\pb+t_kq_k,\xb+t_ku_k,t_kv_k)\ \forall k.\]
\end{definition}

By the definition of the directional limiting coderivative we see that
\begin{equation}\label{EqInclOuterCoderiv}D^\ast_{>_p}M((\pb,\xb,0);u)(v^\ast)\subset D^\ast M((\pb,\xb,0);(0,u,0))(v^\ast)\ \forall v^\ast\in\R^m.\end{equation}
Further we have $D^\ast_{>_p}M((\pb,\xb,0);u)\equiv\emptyset$ whenever $(0,u,0)\not\in T_{\Gr M}(\pb,\xb,0)$, i.e., $0\not\in DM(\pb,\xb,0)(0,u)$.
These observations yield the following point based sufficient condition for the restricted calmness property with respect to $p$  to hold.
\begin{corollary}\label{CorPropC}
Let $0\in M(\pb,\xb)$ and assume that there do not exist elements $u\in \Sp_{\R^n}$ and $q\in \Sp_{\R^l}\cap T_{\dom H_M}(\pb)$ satisfying
\begin{eqnarray}
\label{EqRPCPTan}&&0\in DM(\pb,\xb,0)(0,u),\\
\label{EqRPCPCoderiv2}  &&(-q,0)\in D^\ast_{>_p} M((\pb,\xb,0);u)(0).
\end{eqnarray}
Then $M$ has the restricted calmness property with respect to $p$  at $(\pb,\xb,0)$.
\end{corollary}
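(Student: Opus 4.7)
The natural strategy is to argue by contraposition and reduce to Theorem~\ref{ThRPCP}. Assume that $M$ does not have the restricted calmness property with respect to $p$ at $(\pb,\xb,0)$. Then Theorem~\ref{ThRPCP} furnishes elements $u\in\Sp_{\R^n}$, $q^\ast\in\Sp_{\R^l}$ and sequences $t_k\searrow 0$, $(q_k,u_k,v_k)\to(0,u,0)$, $(q_k^\ast,u_k^\ast,v_k^\ast)\to(q^\ast,0,0)$ with $q_k\neq 0$, $(q_k^\ast,u_k^\ast,-v_k^\ast)\in\widehat N_{\Gr M}(\pb+t_kq_k,\xb+t_ku_k,t_kv_k)$ for every $k$, and $\lim_{k\to\infty}\langle q_k^\ast,q_k/\norm{q_k}\rangle=-1$. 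The task is to manufacture from these sequences an element $(u,q)$ satisfying \eqref{EqRPCPTan}--\eqref{EqRPCPCoderiv2}, in contradiction with the hypothesis.

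The tangent condition \eqref{EqRPCPTan} is essentially free: the relation $(q_k^\ast,u_k^\ast,-v_k^\ast)\in\widehat N_{\Gr M}(\pb+t_kq_k,\xb+t_ku_k,t_kv_k)$ forces $(\pb+t_kq_k,\xb+t_ku_k,t_kv_k)\in\Gr M$, and since $t_k\searrow 0$ and $(q_k,u_k,v_k)\to(0,u,0)$ the definition of the contingent cone yields $(0,u,0)\in T_{\Gr M}(\pb,\xb,0)$, i.e., $0\in DM(\pb,\xb,0)(0,u)$. To identify $q$, I would set $\tilde q_k:=q_k/\norm{q_k}\in\Sp_{\R^l}$. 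The limit condition $\langle q_k^\ast,\tilde q_k\rangle\to -1$ together with $q_k^\ast\to q^\ast$ and $\norm{\tilde q_k}=\norm{q^\ast}=1$ forces, by Cauchy--Schwarz, $\tilde q_k\to -q^\ast=:q\in\Sp_{\R^l}$. Moreover, writing $\pb+t_kq_k=\pb+s_k\tilde q_k$ with $s_k:=t_k\norm{q_k}\searrow 0$ and noting that $\pb+s_k\tilde q_k\in\dom H_M$, we obtain $q\in T_{\dom H_M}(\pb)$.

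It remains to establish the coderivative condition \eqref{EqRPCPCoderiv2}, namely $(-q,0)=(q^\ast,0)\in D^\ast_{>_p}M((\pb,\xb,0);u)(0)$. This is exactly the content of the definition of the outer coderivative with respect to $p$: the same sequences $t_k$, $(q_k,u_k,v_k)\to(0,u,0)$, $(q_k^\ast,u_k^\ast,v_k^\ast)\to(q^\ast,0,0)$ with $q_k\neq 0$ and $(q_k^\ast,u_k^\ast,-v_k^\ast)\in\widehat N_{\Gr M}(\pb+t_kq_k,\xb+t_ku_k,t_kv_k)$ witness the membership $(q^\ast,0)\in D^\ast_{>_p}M((\pb,\xb,0);u)(0)$, with limiting $v^\ast=0$. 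Combining the three items, $u$ and $q$ violate the hypothesis of the corollary, the desired contradiction.

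The only non-routine step is the identification $\tilde q_k\to -q^\ast$, which rests on the simple but crucial observation that in $\Sp_{\R^l}$ the relation $\langle q^\ast,q\rangle=-1$ pins down $q$ uniquely as $-q^\ast$; everything else is direct bookkeeping from Theorem~\ref{ThRPCP} and the definitions of $DM$, $T_{\dom H_M}$, and $D^\ast_{>_p}M$.
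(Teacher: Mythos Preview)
Your proof is correct and follows essentially the same route as the paper: both argue by contraposition from Theorem~\ref{ThRPCP}, extract the direction $q$ from the normalized sequence $q_k/\norm{q_k}$, use the alignment condition $\langle q^\ast,q\rangle=-1$ on the unit sphere to identify $q^\ast=-q$, and read off the membership $q\in T_{\dom H_M}(\pb)$ from $\pb+t_k\norm{q_k}\cdot(q_k/\norm{q_k})\in\dom H_M$. The only cosmetic difference is that the paper first passes to a convergent subsequence of $q_k/\norm{q_k}$ and then invokes Cauchy--Schwarz, whereas you argue directly that the full sequence converges to $-q^\ast$; your version is slightly cleaner, and your explicit verification of \eqref{EqRPCPTan} and \eqref{EqRPCPCoderiv2} spells out steps the paper leaves implicit.
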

\begin{proof}Consider the sequences specified in Theorem \ref{ThRPCP} which satisfy, in particular, the relations (\ref{EqRPCPCoDeriv}), (\ref{EqRPCPAlign}).   By passing to a subsequence if necessary we can assume that $q_k/\norm{q_k}$ converges to some $q\in\Sp_{\R^l}$. Since we also have $\pb+(t_k\norm{q_k})(q_k/\norm{q_k})\in\dom H_M$ and $t_k\norm{q_k}\to 0$, the inclusion $q\in T_{\dom H_M}(\pb)$ follows. From the second condition in (\ref{EqRPCPAlign}) it follows that
\[ \langle q^{\ast}, q\rangle = -1 \mbox{ with } q^{\ast}, q \in S_{\mathbb{R}^{l}}.\]
This is, however, possible only when $q^{\ast}= -q$  and we are done.
\end{proof}

In the following example we demonstrate the application of Theorem \ref{ThRPCP} in a situation when $M$ is of the form $M(x,y)=G(x,y)+Q(y)$ with $G$ being non-Lipschitzian.
\begin{example}
Consider the multifunction $M:\R\times\R\to\R$ given by $M(p,x)=\sqrt{\vert px\vert}+x+\R_+$ at $(\pb,\xb)=(0,0)$. Straightforward calculations yield
\[T_{\Gr M}(0,0,0)=\{(q,u,v)\mv v\geq \sqrt{\vert qu\vert}+u\}\]
and in case when $px\not=0$,
\[\hat D^\ast M(p,x,y)(v^\ast)=\begin{cases}\{(0,0)\}&\mbox{if $y>\sqrt{\vert px\vert}+x$, $v^\ast=0$},\\
\{ (\sqrt{\vert x/p\vert} v^\ast{\rm sign\,} p,\sqrt{\vert p/x\vert}v^\ast{\rm sign\,} x+v^\ast)\}&\mbox{if $y=\sqrt{\vert px\vert}+x$, $v^\ast\geq0$},\\
\emptyset&\mbox{else.}\end{cases}\]
Now assume that there are sequences $t_k\searrow 0$, $(q_k,u_k,v_k)\to (0,u,0)\in T_{\Gr M}(0,0,0)$ and $(q_k^\ast,u_k^\ast,v_k^\ast)\to (q^\ast,0,0)$ such that $u\in\Sp_{\R}$, $q^\ast\in\Sp_{\R}$ and \eqref{EqRPCPCoDeriv}, \eqref{EqRPCPAlign} hold. The condition $(0,u,0)\in T_{\Gr M}(0,0,0)$ amounts to $u\leq 0$, together with $u\in\Sp_{\R}$ we obtain $u=-1$ and therefore $u_k\not=0$ holds for all $k$ sufficiently large. Since $q_k\not=0$, we obtain $v_k^\ast\geq 0$,
\[q_k^\ast=\sqrt{\left\vert \frac{t_ku_k}{t_kq_k}\right\vert} v_k^\ast{\rm  sign\,}{t_kq_k}=\sqrt{\left\vert \frac{u_k}{q_k}\right\vert} v_k^\ast {\rm sign\,}{q_k}\]
and consequently
\[\skalp{q_k^\ast,\frac{q_k}{\vert q_k\vert}}=\sqrt{\left\vert \frac{u_k}{q_k}\right\vert} v_k^\ast\geq 0\]
contradicting \eqref{EqRPCPAlign}. Hence, it follows from Theorem \ref{ThRPCP} that $M$ has the restricted calmness property with respect to $p$  at $(\pb,\xb,0)$.\hfill$\triangle$
\end{example}

Corollary \ref{CorPropC} is illustrated by the following example.
\begin{example}
  Let $M(p,x)=\hat N_{\Gamma(p)}(x)$, where  $\Gamma(p)=\{x\in \R\mv px\leq 0\}$ for $p\in\R$, and let $(\pb,\xb)=(0,0)$. Then
  \[\Gamma(p)=\begin{cases}
    \R&\mbox{if $p=0$,}\\
    \R_-&\mbox{if $p>0$,}\\
        \R_+&\mbox{if $p<0$}
  \end{cases}\]
  and therefore
  \[M(p,x)=\begin{cases}
    \{0\}&\mbox{if $p=0$ or $px<0$,}\\
    \R_+&\mbox{if $p>0$, $x=0$,}\\
    \R_-&\mbox{if $p<0$, $x=0$,}\\
    \emptyset&\mbox{else,}
  \end{cases}\qquad H_M(p)=
  \begin{cases}
    \R\times\{0\}&\mbox{if $p=0$,}\\
    (\{0\}\times\R_+)\cup(\R_-\times\{0\})&\mbox{if $p>0$,}\\
    (\{0\}\times\R_-)\cup(\R_+\times\{0\})&\mbox{if $p<0$.}
  \end{cases}\]
  Hence $M$ has the restricted calmness property with respect to $p$  at $(\pb,\xb,0)$, but $H_M$ is not calm at $(\pb,(\xb,0))$. Now let us consider the conditions of Corollary \ref{CorPropC}. Since \eqref{EqRPCPCoderiv2} involves the outer directional coderivative of $M$ with respect to $p$ only in directions $u\not=0$, we have to compute the regular normal cone to $\Gr M$ at points $(p,x,y,z)$ with $px\not=0$.
  Straightforward calculations yield
  \[\widehat N_{\Gr M}(p,x,y)=\begin{cases}\{0\}\times\{0\}\times\R&\mbox{if $y=0$ and $px<0$,}\\
  \emptyset&\mbox{else if $px\not=0$,}\end{cases}\]
  showing $D^\ast_{>_p} M((\pb,\xb,0);u)(v^\ast)=\{(0,0)\}$ $\forall v^\ast\in\R$ provided $u\not=0$. Hence we can also conclude from Corollary \ref{CorPropC} that $M$ has the restricted calmness property with respect to $p$. Note that in this example the inclusion \eqref{EqInclOuterCoderiv} is proper and one could not detect the restricted calmness property with respect to $p$  of $M$ by using the standard directional limiting coderivative.\hfill$\triangle$
\end{example}

To verify the metric subregularity of the mapping $M_{\pb}$ at $(\xb,0)$ one can use the criteria presented in Section 2. In the following theorem we apply these conditions  to the frequently arising case of the  so-called parameterized constraint systems. We prove that we  obtain not only calmness of the solution mapping $S$ but also, under some  suitable assumptions, the non-emptiness of $S(p)$ near $\pb $.
\begin{theorem}\label{ThClmConstrSystFO}
  Let
  \begin{equation}\label{EqParamConstrSyst}M(p,x)=G(p,x)-D,\end{equation}
  where $D\subset \R^m$ is closed, $G$ maps $\R^l\times\R^n$ into $\R^m$ and consider the reference point $(\pb,\xb)\in G^{-1}(D)$. Assume that $G(\pb,\cdot)$ is  strictly differentiable at $\xb$ and there are  neighborhoods $W$ of $\pb$, $U$ of $\xb$ and a real $L'$ such that
  \begin{equation}\label{EqPropC}\norm{G(p,x)-G(\pb,x)}\leq L'\norm{p-\pb}, \forall (p,x)\in W\times U.\end{equation}
  If there do not exist vectors $0\not=u\in\R^n$, $0\not=v^\ast\in\R^m$ such that
  \begin{eqnarray}\label{EqSubreg1a}\nabla_x G(\pb,\xb)u&\in& T_D(G(\pb,\xb))\\
  \label{EqSubreg2a}0&=&\nabla_x G(\pb,\xb)^Tv^\ast,\\
  \label{EqSubreg3a}v^\ast&\in& N_D(G(\pb,\xb);\nabla_x G(\pb,\xb)u),
  \end{eqnarray}
  then $S$ is calm at $(\pb,\xb)$.

    If in addition $G$ is  partially differentiable with respect to $x$ on $W\times U$, if the partial derivative $\nabla_x G$ is continuous at $(\pb,\xb)$, if  for every $p\in W$ the mapping $\nabla_x G(p,\cdot)$ is continuous on $U$
   and if there exists some  $\tilde u\in\Sp_{\R^n}$ such that $\nabla_x G(\pb,\xb)\tilde u\in T_D(G(\pb,\xb))$ and $\nabla_x G(\pb,\xb)\tilde u$ is derivable, then there exists a neighborhood $\tilde W$ of $\pb$ and a real $\tilde L$ such that $S(p)\not=\emptyset$ $\forall p\in\tilde W$ and
  \begin{equation}\label{EqLip1}
  \dist{\xb,S(p)}\leq \tilde L\norm{p-\pb},\ p\in\tilde W.
  \end{equation}
\end{theorem}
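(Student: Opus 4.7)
The plan is to treat the two claims separately. The first will follow from Theorem~\ref{ThGenClm} applied to $M(p,x)=G(p,x)-D$, and the second will be obtained by producing an approximate solution from the derivable direction $\tilde u$ and then converting it into an exact element of $S(p)$ by means of the stability of FOSCMS provided by Theorem~\ref{ThStabMS}.

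For the first claim, the bound~\eqref{EqPropC} immediately delivers the restricted calmness of $M$ with respect to $p$ at $(\pb,\xb,0)$. Indeed, if $(x,0)\in H_M(p)$ with $(p,x)$ close to $(\pb,\xb)$, then $G(p,x)\in D$, so any near-projection $d\in D$ of $G(\pb,x)$ gives $(x,G(\pb,x)-d)\in H_M(\pb)$ with $\norm{G(\pb,x)-d}\le\norm{G(\pb,x)-G(p,x)}\le L'\norm{p-\pb}$. Furthermore, the assumed non-existence of $(u,v^{\ast})\neq(0,0)$ solving~\eqref{EqSubreg1a}--\eqref{EqSubreg3a} is precisely the FOSCMS condition of Corollary~\ref{CorMS_ConstrSyst}(1), so $M_{\pb}$ is metrically subregular at $(\xb,0)$. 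Theorem~\ref{ThGenClm} then yields the calmness of $S$ at $(\pb,\xb)$.

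For the second claim, fix $p$ close to $\pb$ and set $t_p:=\norm{p-\pb}$. By derivability of $\nabla_x G(\pb,\xb)\tilde u\in T_D(G(\pb,\xb))$ one picks $\eta:[0,\varepsilon]\to D$ with $\eta(0)=G(\pb,\xb)$ and $\eta(t)=G(\pb,\xb)+t\nabla_x G(\pb,\xb)\tilde u+o(t)$. For the trial point $x^{0}(p):=\xb+t_p\tilde u$, splitting
\[
G(p,x^{0}(p))-\eta(t_p)=\bigl(G(p,x^{0}(p))-G(\pb,x^{0}(p))\bigr)+\bigl(G(\pb,x^{0}(p))-\eta(t_p)\bigr),
\]
using~\eqref{EqPropC} on the first summand and the strict differentiability of $G(\pb,\cdot)$ at $\xb$ on the second, gives $\dist{G(p,x^{0}(p)),D}\le L'\norm{p-\pb}+o(\norm{p-\pb})$. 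To convert this approximate solution into an element of $S(p)$, observe that FOSCMS is statement~(iii) of Theorem~\ref{ThStabMS}, hence also~(ii): there are $r,\kappa'>0$ such that every $C^{1}$ perturbation $h$ of $M_{\pb}$ with $h(\xb)=0$ and $\norm{\nabla h(\xb)}\le r$ preserves metric subregularity at $(\xb,0)$ with modulus $\kappa'$. The centered perturbation $h_{p}(x):=G(p,x)-G(\pb,x)-(G(p,\xb)-G(\pb,\xb))$ satisfies $h_{p}(\xb)=0$ and, by continuity of $\nabla_x G$ at $(\pb,\xb)$, $\norm{\nabla h_{p}(\xb)}\to 0$ as $p\to\pb$, so $M_{\pb}+h_{p}$ is metrically subregular at $(\xb,0)$ with uniform modulus $\kappa'$. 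Combining this bound with an iterative correction that absorbs the $\R^{m}$-shift $G(p,\xb)-G(\pb,\xb)$, whose norm is at most $L'\norm{p-\pb}$, yields $x(p)\in S(p)$ with $\norm{x(p)-\xb}\le\tilde L\norm{p-\pb}$, which proves both non-emptiness of $S(p)$ and~\eqref{EqLip1}.

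The main obstacle is the very last step. The uniform subregularity bounds distances to $(M_{\pb}+h_{p})^{-1}(0)$, not to $S(p)=M_p^{-1}(0)$, and those sets differ by the range-space shift $G(p,\xb)-G(\pb,\xb)$; absorbing that shift while keeping the modulus independent of $p$ and while propagating the $o(t_p)$ slack coming from the derivable direction through the subregularity inequality requires quantitative bookkeeping and most likely a Newton-type or fixed-point iteration rather than a one-shot estimate. The derivable direction $\tilde u$ is essential at this stage: without it a trial point at $\xb$ would leave a defect of order exactly $L'\norm{p-\pb}$ in $\dist{G(p,\xb),D}$ that no single subregularity step can eliminate, and the Lipschitz rate in~\eqref{EqLip1} would be unattainable.
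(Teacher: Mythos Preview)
Your treatment of the first claim is correct and matches the paper exactly: the bound~\eqref{EqPropC} gives the restricted calmness property with respect to $p$, FOSCMS (Corollary~\ref{CorMS_ConstrSyst}(1)) gives metric subregularity of $M_{\pb}$ at $(\xb,0)$, and Theorem~\ref{ThGenClm} finishes.

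For the second claim your argument has a genuine gap, and it is in fact worse than the obstacle you flag. Theorem~\ref{ThStabMS} asserts only that each perturbed map $M_{\pb}+h$ is metrically subregular; it does \emph{not} provide a modulus $\kappa'$ or a neighborhood radius that is uniform in $h$, and you need both to let $p$ vary. Even granting uniformity, your perturbation $h_p$ solves $G(p,x)-c_p\in D$ with $c_p=G(p,\xb)-G(\pb,\xb)$, not $G(p,x)\in D$; the shift $c_p$ sits on the range side, precisely where metric \emph{sub}regularity (as opposed to metric regularity) gives you no control, so no ``iterative correction'' is available without further input.

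The paper bypasses these issues with a contradiction argument driven by a penalty problem. Assuming $\dist{\xb,S(p_k)}>k\norm{p_k-\pb}$ along a sequence $p_k\to\pb$, one sets $t_k:=\tfrac{k}{2}\norm{p_k-\pb}$ and minimizes
\[
\phi_k(x,y):=\tfrac{4(L+1)}{k}\norm{x-(\xb+t_k\tilde u)}+\norm{y}\quad\text{over }(x,y)\in\Gr M_{p_k}.
\]
The derivable direction $\tilde u$ furnishes a feasible point with $\phi_k$-value $O(\norm{p_k-\pb})$, and the contradiction hypothesis forces the minimizer $(\xb_k,\yb_k)$ to satisfy $\yb_k\ne 0$. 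The first-order optimality condition then produces, after perturbing to regular normals, vectors $v_k^\ast$ on the unit sphere with $\nabla_x G(p_k,\cdot)^T v_k^\ast\to 0$ and $-v_k^\ast\in\widehat N_D(\cdot)$ at nearby base points. Rescaling by $\tau_k=\norm{\xb_k'-\xb}$ and passing to limits yields a unit $u$ and a unit $v^\ast$ satisfying \eqref{EqSubreg1a}--\eqref{EqSubreg3a}, contradicting FOSCMS. Thus the derivable direction is used only to bound the penalty minimum, not to feed a subregularity estimate; the solvability for each $p$ is obtained extrinsically via the optimality conditions rather than by correcting a trial point.
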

\begin{proof}
  We first show that $M$ has the restricted calmness property with respect to $p$  at $(\pb,\xb,0)$. Indeed, if $p\in W$ and $(x,0)\in H_M(p)\cap U\times\{0\}$, then $0\in M(p,x)=G(p,x)-D$
  and consequently $G(\pb,x)-G(p,x)\in G(\pb,x)-D$, i.e., $(x,G(\pb,x)-G(p,x))\in H_M(\pb)$. Hence
  \[\dist{(x,0),H_M(\pb)}\leq \norm{(x,0)-(x,G(\pb,x)-G(p,x))}\leq\norm{G(\pb,x)-G(p,x)}\leq L'\norm{p-\pb}\]
  and the restricted calmness property with respect to $p$  for $M$ at $(\pb,\xb,\yb)$ follows.
  By using FOSCMS of Corollary \ref{CorMS_ConstrSyst} we see that the imposed assumptions guarantee metric subregularity of $M_{\pb}$ at $(\xb,0)$. Thus  calmness of $S$ follows from Theorem \ref{ThGenClm}.  It remains to show  the non-emptiness of $S$ near $\pb$ and the bound \eqref{EqLip1}. This is done by contraposition. Assume on the contrary that there is some sequence $p_k$ converging to $\pb$ such that  $p_k\not=\pb$ and
  \begin{equation}\label{eq-2000}
\dist{\xb, S(p_k)}>k\norm{p_k-\pb}.
  \end{equation}
   Since the tangent vector $\nabla_x G(\pb,\xb)\tilde u$ is assumed to be derivable, there exists a mapping $\xi:\R_+\to D$ such that $\xi(0)=G(\pb,\xb)$ and $\xi(t)-(G(\pb,\xb)+t\nabla_x G(\pb,\xb)\tilde u)=\oo(t)$ as $t\searrow 0$. Since $G(\pb,\cdot)$ is assumed to be continuously differentiable, $\eta(t):= \norm{G(\pb,\xb+t\tilde u)-\xi(t)}=\oo(t)$ follows. By passing to a subsequence if necessary we can assume that $\norm{p_k-\pb}\leq k^{-2}$ and $\eta(t_k)\leq \norm{p_k-\pb}$ holds for all $k$, where $t_k:=\frac{k\norm{p_k-\pb}}{2}$. Then $t_k\searrow 0$ and we can find some $L>0$ such that $\norm{G(p_k,\xb+t_k\tilde u)-G(\pb,\xb+t_k\tilde u)}\leq L\norm{p_k-\pb}$ holds for all $k$ sufficiently large, without loss of generality for all $k$.

  Next consider for each $k$ a solution $(\xb_k,\yb_k)$ of the program
  \[\min_{x,y}\phi_k(x,y):=\frac {4(L+1)}{k}\norm{x-(\xb+t_k\tilde u)}+\norm{y}\quad\mbox{subject to}\quad (x,y)\in\Gr M_{p_k}.\]
  Because of $(\xb+t_k\tilde u, G(p_k,\xb+t_k\tilde u)-\xi(t_k))\in \Gr M_{p_k}$ we obtain
  \begin{eqnarray*}\phi_k(\xb_k,\yb_k)&\leq& \norm{G(p_k,\xb+t_k\tilde u)-\xi(t_k)}\\
  &\leq&   \norm{G(p_k,\xb+t_k\tilde u)-G(\pb,\xb+t_k\tilde u)}+\norm{G(\pb,\xb+t_k\tilde u)-\xi(t_k)}\\
  &\leq& L\norm{p_k-\pb}+\eta(t_k)\leq (L+1)\norm{p_k-\pb}.
  \end{eqnarray*}
  Further we must have $\yb_k\not=0$ since otherwise we would have $\frac {4(L+1)}{k}\norm{\xb_k-(\xb+t_k\tilde u)}=\phi_k(\xb_k,\yb_k)\leq (L+1)\norm{p_k-\pb}$ and $0=\yb_k\in M(p_k,\xb_k)$ implying $\xb_k\in S(p_k)$ and
  \[\dist{\xb, S(p_k)}\leq \norm{\xb_k-\xb}\leq \norm{\xb_k-(\xb+t_k\tilde u)}+t_k\norm{\tilde u}\leq \frac k4\norm{p_k-\pb}+\frac k2\norm{p_k-\pb},\]
  which clearly contradicts the  inequality (\ref{eq-2000}).

  By the first order optimality conditions \cite[Theorem 8.15]{RoWe98} we have $0\in \partial \phi_k(\xb_k,\yb_k) +N_{\Gr M_{p_k}}(\xb_k,\yb_k)$, where $\partial \phi_k$ stands for the subdifferential in the sense of convex analysis, cf. \cite[Proposition 8.12]{RoWe98}. Hence there are elements $(x_k^\ast,y_k^\ast)\in-\partial \phi_k(\xb_k,\yb_k)\cap N_{\Gr M_{p_k}}(\xb_k,\yb_k)$. Then $x_k^\ast\in \frac {4(L+1)}k \B_{\R^n}$ and $y_k^\ast\in\Sp_{\R^m}$ because of $\yb_k\not=0$. By the definition of the limiting normal cone we can find elements $(\xb_k',\yb_k')\in \Gr M_{p_k}$ and $(u_k^\ast,v_k^\ast)\in \widehat N_{\Gr M_{p_k}}(\xb_k',\yb_k')$ such that $\norm{(\xb_k,\yb_k)-(\xb_k',\yb_k')}\leq t_k/k$ and $\norm{(x_k^\ast,y_k^\ast)-(u_k^\ast,v_k^\ast)}\leq 1/k$. From $M_{p_k}(\cdot)=G(p_k,\cdot)-D$ we conclude that $-v_k^\ast\in\widehat N_D(G(p_k,\xb_k')-\yb_k')$ and $u_k^\ast=-\nabla_x G(p_k,\xb_k')^Tv_k^\ast\to 0$. Since
  \[\norm{\xb_k'-\xb}\geq \norm{\xb_k-\xb}-\frac{t_k}k\geq t_k\norm{\tilde u}-\norm{\xb_k-(\xb+t_k\tilde u)}-\frac{t_k}k\geq t_k-\frac k4\norm{p_k-\pb}-\frac {t_k}k=(\frac k4-\frac 12)\norm{p_k-\pb}\]
  and
  \[\norm{\xb_k'-\xb}\leq \norm{\xb_k-\xb}+\frac{t_k}k\leq t_k\norm{\tilde u}+\norm{\xb_k-(\xb+t_k\tilde u)}+\frac{t_k}k\leq t_k+\frac k4\norm{p_k-\pb}+\frac {t_k}k=(\frac {3k}4+\frac 12)\norm{p_k-\pb},\]
  it follows that $\tau_k\to 0$ and $(p_k-\pb)/\tau_k\to 0$, where $\tau_k:=\norm{\xb_k'-\xb}$.  By passing to a subsequence we may assume that the sequence $-v_k^\ast$ converges to $v^\ast$ and the sequence $(\xb_k'-\xb)/\tau_k$ converges to some $u\in\Sp_{\R^n}$. Together with
  \[\norm{\yb_k'}\leq\norm{\yb_k}+\frac{t_k}k\leq \phi_k(\xb_k,\yb_k)+\frac{t_k}k\leq (L+\frac 32)\norm{p_k-\pb}=\oo(\tau_k),\]
  we obtain
  \begin{eqnarray*}\lim_{k\to\infty}\frac{G(p_k,\xb_k')-\yb_k'-G(\pb,\xb)}{\tau_k}&=&
  \lim_{k\to\infty}\left(\frac{G(p_k,\xb_k')-G(\pb,\xb_k')}{\tau_k}+\frac{G(\pb,\xb_k')-G(\pb,\xb)}{\tau_k}\right)\\
  &=&\nabla_x G(\pb,\xb)u.\end{eqnarray*}
  This shows $v^\ast\in N_D(G(\pb,\xb),\nabla_x G(\pb,\xb)u)$ and $\nabla_x G(\pb,\xb)u\in T_D(G(\pb,\xb))$. Since we also have
  \[0=\lim_{k\to\infty}\nabla_xG(p_k,\xb_k')^T (-v_k^\ast)=\nabla_xG(\pb,\xb)^T v^\ast\]
  we obtain a contradiction to \eqref{EqSubreg1a}-\eqref{EqSubreg3a} and this completes the proof.
\end{proof}

The next statement concerns the frequently arising case when $D$ is a union of convex polyhedrons which occurs, e.g., in case of parameterized complementarity problems.

\begin{theorem}\label{ThClmConstrSystSO}
  In the setting of Theorem \ref{ThClmConstrSystFO} consider the situation that  $D\subset \R^m$ is  the union of finitely many convex polyhedra, $G(\pb,\cdot)$ is  twice Fr\'echet differentiable at $\xb$ and there are  neighborhoods $W$ of $\pb$, $U$ of $\xb$ and a real $L'$ such that \eqref{EqPropC} holds.

  If there do not exist vectors $0\not=u\in\R^n$, $0\not=v^\ast\in\R^m$ verifying \eqref{EqSubreg1a}-\eqref{EqSubreg3a} and
  \begin{equation}\label{EqSubreg4a}u^T\nabla_{xx}^2 ({v^\ast}^TG)(\pb,\xb)u\geq 0,
  \end{equation}
  then $S$ is calm at $(\pb,\xb)$.

  Moreover, if in addition $G$ is twice partially differentiable with respect to $x$ on $W\times U$, if $G$, $\nabla G$ and $\nabla_{xx}^2 G$ are continuous on $W\times U$, if
  \[\norm{\nabla_x G(p,\xb)-\nabla_x G(\pb,\xb)}\leq L'\norm{p-\pb}\]
  holds for all $p$ near $\pb$
   and if  there exists some nonzero $\tilde u$ with $\nabla_x G(\pb,\xb)u\in T_D(G(\pb,\xb))$, then there exist a neighborhood $\tilde W$ of $\pb$ and a real $\tilde L$ such that $S(p)\not=\emptyset$ $\forall p\in\tilde W$ and
  \begin{equation}\label{EqHoeld1}
  \dist{\xb,S(p)}\leq \tilde L\norm{p-\pb}^{1/2},\ p\in\tilde W.
  \end{equation}
\end{theorem}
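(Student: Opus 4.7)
For the calmness assertion, I would follow the proof of Theorem \ref{ThClmConstrSystFO} verbatim up to the invocation of SOSCMS in place of FOSCMS. The Lipschitz bound \eqref{EqPropC} makes $M$ restricted-calm with respect to $p$ at $(\pb,\xb,0)$ exactly as in the first-order case, and the polyhedrality of $D$ together with the hypothesis that no $u,v^\ast$ satisfies \eqref{EqSubreg1a}--\eqref{EqSubreg4a} is precisely the SOSCMS hypothesis of Corollary \ref{CorMS_ConstrSyst}(2), so $M_{\pb}$ is metrically subregular at $(\xb,0)$. Theorem \ref{ThGenClm} then yields the calmness of $S$ at $(\pb,\xb)$.

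For non-emptiness near $\pb$ and the H\"older estimate \eqref{EqHoeld1}, I would argue by contradiction, assuming a sequence $p_k\to\pb$ with $p_k\neq\pb$ and
\[\dist{\xb,S(p_k)}>k\norm{p_k-\pb}^{1/2},\]
and, by passing to a subsequence, $k\norm{p_k-\pb}\to 0$. Polyhedrality of $D$ combined with $\nabla_x G(\pb,\xb)\tilde u\in T_D(G(\pb,\xb))$ yields the \emph{exact} inclusion $\xi(t):=G(\pb,\xb)+t\nabla_x G(\pb,\xb)\tilde u\in D$ for all small $t>0$, so the derivability hypothesis of Theorem \ref{ThClmConstrSystFO} is automatic, and the twice Fr\'echet differentiability of $G(\pb,\cdot)$ gives $\norm{G(\pb,\xb+t\tilde u)-\xi(t)}=O(t^2)$. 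I would then set $t_k:=\sqrt k\,\norm{p_k-\pb}^{1/2}$ and, for a suitable constant $c>0$, let $(\xb_k,\yb_k)$ minimize
\[\phi_k(x,y):=\frac{c}{\sqrt k}\norm{x-(\xb+t_k\tilde u)}+\norm{y}\quad\text{subject to}\quad (x,y)\in\Gr M_{p_k}.\]
The trial point $(\xb+t_k\tilde u,\,G(p_k,\xb+t_k\tilde u)-\xi(t_k))$ is feasible and shows $\phi_k(\xb_k,\yb_k)=O(\norm{p_k-\pb})=O(t_k^2/k)$. If $\yb_k=0$, the triangle inequality would force $\dist{\xb,S(p_k)}\leq k\norm{p_k-\pb}^{1/2}$ for $k$ large, contradicting the working hypothesis; hence $\yb_k\neq 0$.

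The first-order optimality condition followed by the standard perturbation to regular normals then produces $(\xb_k',\yb_k')\in\Gr M_{p_k}$ close to $(\xb_k,\yb_k)$ and $(u_k^\ast,v_k^\ast)\in\widehat N_{\Gr M_{p_k}}(\xb_k',\yb_k')$ with $v_k^\ast\in\Sp_{\R^m}$, $u_k^\ast\to 0$, and, by \eqref{EqSumMF_Smooth_RegNormalCone}, $-v_k^\ast\in\widehat N_D(G(p_k,\xb_k')-\yb_k')$ together with $u_k^\ast=-\nabla_x G(p_k,\xb_k')^T v_k^\ast$. With $\tau_k:=\norm{\xb_k'-\xb}\sim t_k$ the chosen scaling gives $\norm{p_k-\pb}/\tau_k^2=O(1/k)\to 0$ and $\norm{\yb_k'}/\tau_k^2\to 0$. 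Passing to subsequential limits $(\xb_k'-\xb)/\tau_k\to u\in\Sp_{\R^n}$ and $-v_k^\ast\to v^\ast\in\Sp_{\R^m}$, the limiting argument of Theorem \ref{ThClmConstrSystFO} carries through unchanged and delivers $\nabla_x G(\pb,\xb)u\in T_D(G(\pb,\xb))$, $v^\ast\in N_D(G(\pb,\xb);\nabla_x G(\pb,\xb)u)$ and $\nabla_x G(\pb,\xb)^T v^\ast=0$. The new second-order ingredient is extracted by testing the regular normal $-v_k^\ast$ at $G(p_k,\xb_k')-\yb_k'$ against the point $G(\pb,\xb)+\tau_k\nabla_x G(\pb,\xb)u$, which by polyhedrality of $D$ lies \emph{exactly} in $D$; expanding $G(p_k,\xb_k')$ in a second-order Taylor formula about $(\pb,\xb)$ (with the $p$-dependent cross term controlled by \eqref{EqPropC} and its analogue on $\nabla_x G(\cdot,\xb)$ contributing only $\oo(\tau_k^2)$), using $\nabla_x G(\pb,\xb)^T v^\ast=0$ to kill the first-order contribution, and dividing by $\tau_k^2/2$ yields in the limit $u^T\nabla^2_{xx}({v^\ast}^T G)(\pb,\xb)u\geq 0$, which contradicts the assumed non-existence of $u,v^\ast$ satisfying \eqref{EqSubreg1a}--\eqref{EqSubreg4a}.

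The main obstacle lies in this last step: the regular-normality inequality only yields an error of order $\oo(\tau_k)$, so extracting second-order information demands a delicate calibration of $t_k\sim\sqrt k\norm{p_k-\pb}^{1/2}$ and the coefficient $c/\sqrt k$ so that the residues $\norm{\yb_k'}/\tau_k^2$, $\norm{p_k-\pb}/\tau_k^2$ and the first-order coupling $\skalp{u_k^\ast,(\xb_k'-\xb)/\tau_k-u}/\tau_k$ all vanish, leaving only the second-order Taylor term. Polyhedrality of $D$ is essential in two ways: it supplies an exact feasible test point $G(\pb,\xb)+\tau_k\nabla_x G(\pb,\xb)u\in D$ at the second-order scale, and it makes $\nabla_x G(\pb,\xb)\tilde u$ automatically derivable, so no separate derivability assumption is needed.
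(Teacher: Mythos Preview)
Your treatment of the calmness assertion is correct and coincides with the paper's proof: restricted calmness with respect to $p$ via \eqref{EqPropC}, then SOSCMS from Corollary \ref{CorMS_ConstrSyst}(2), then Theorem \ref{ThGenClm}.

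For the non-emptiness and the H\"older bound \eqref{EqHoeld1}, however, the paper does \emph{not} give a self-contained argument; it simply invokes \cite[Proposition 2(2.)]{GfrKl15}. You instead attempt a direct proof modeled on Theorem \ref{ThClmConstrSystFO}, and this sketch contains a genuine gap. The feasibility of your trial point gives
\[
\phi_k(\xb_k,\yb_k)\leq \norm{G(p_k,\xb+t_k\tilde u)-\xi(t_k)}\leq L'\norm{p_k-\pb}+O(t_k^2),
\]
and with $t_k=\sqrt{k}\,\norm{p_k-\pb}^{1/2}$ the dominant term is $O(t_k^2)=O(k\norm{p_k-\pb})$, \emph{not} $O(\norm{p_k-\pb})=O(t_k^2/k)$ as you claim. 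This single miscount propagates: since $\norm{\yb_k}\leq\phi_k=O(t_k^2)$, the ratio $\norm{\yb_k'}/\tau_k^2$ is only $O(1)$, not $o(1)$, and the residual $\langle v_k^\ast,\yb_k'\rangle/\tau_k^2$ does not vanish in your second-order extraction. Likewise $\tau_k\sim t_k$ would need $\phi_k/\alpha_k=o(t_k)$ with $\alpha_k=c/\sqrt{k}$, i.e.\ $\sqrt{k}\,t_k\to 0$, which is $k\norm{p_k-\pb}^{1/2}\to 0$ --- not implied by your subsequence hypothesis $k\norm{p_k-\pb}\to 0$. More fundamentally, \emph{no} choice of $t_k$ can force $\phi_k/t_k^2\to 0$, because the second-order Taylor remainder $\norm{G(\pb,\xb+t_k\tilde u)-\xi(t_k)}$ is genuinely of order $t_k^2$ (with coefficient $\tfrac12\norm{\nabla_{xx}^2G(\pb,\xb)[\tilde u,\tilde u]}$), so the strategy ``make all residues $o(\tau_k^2)$'' cannot succeed as written.

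A repair along your lines is possible but requires an additional idea: one must exploit that the $\yb_k'$-term enters the regular-normal inequality with a favorable sign (since $v_k^\ast\approx -\yb_k/\norm{\yb_k}$ from the optimality condition, so $\langle v_k^\ast,\yb_k\rangle\approx -\norm{\yb_k}\leq 0$), rather than trying to make it vanish; and one must control the first-order coupling $\tau_k^{-1}\langle u_k^\ast,u_k'-u\rangle$ by a more careful choice of the test point and the perturbation size in passing to regular normals. You correctly flag this coupling as ``the main obstacle'' but do not resolve it. This is precisely the work that \cite{GfrKl15} carries out and that the present paper outsources.
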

\begin{proof}The same arguments as in the proof of Theorem \ref{ThClmConstrSystFO} show that $M$ has the restricted calmness property with respect to $p$  at $(\pb,\xb,0)$. By our assumptions, SOSCMS of Corollary \ref{CorMS_ConstrSyst} is fulfilled for $M_{\pb}$ at $(\xb,0)$ and therefore the calmness of $S$ at $(\pb,\xb)$ follows from Theorem \ref{ThGenClm}.
  Further,  the non-emptiness   of $S(p)$ and the bound \eqref{EqHoeld1} follow from  \cite[Proposition 2 (2.)]{GfrKl15}.
\end{proof}

The situation of Theorem \ref{ThClmConstrSystFO} is illustrated in the following example.
 \begin{example}\label{ExMetrSubreg}
 Let $p \in \mathbb{R}^{2}, x \in \mathbb{R}^{2} $  and $S$ be implicitly given by the complementarity problem
 \[
 0 \leq x_{1}-p_{1}\perp x_{2}-p_{2}\geq 0
 \]
 combined with the (nonlinear) inequality constraints $-x_{1}-x_{1}^{2} \leq x_{2}\leq x_{1}+x_{1}^{2}$. Let $(\pb ,\xb )=(0_{\mathbb{R}^{2}}, 0_{\mathbb{R}^{2}})$. This problem attains the form \eqref{EqParamConstrSyst} with
\begin{equation}\label{eq-1}
G(p,x)=
\left[\begin{array}{l}
-p_{1} +x_{1}\\
-p_{2}+x_{2}\\
-x_{1}-x^{2}_{1}-x_{2}\\
-x_{1}-x^{2}_{1}+ x_{2}
\end{array}\right]
\end{equation}
and $D = \mathcal{K}\times \mathbb{R}^{2}_{-}$, where $\mathcal{K}$
denotes the ``complementarity angle'', i.e.,
\[
\mathcal{K}:= \{z \in \mathbb{R}^{2}_{+}| z_{1}z_{2}= 0\}.
\]
$G$ clearly fulfills all the assumptions of Theorem \ref{ThClmConstrSystFO}.
It  follows from \eqref{EqSubreg1a} that  we have to analyze the directions $u \in \mathbb{R}^{2}$ satisfying the conditions
\[
(u_{1}, u_{2})\in \mathcal{K}, -u_{1}-u_{2}\leq 0, -u_{1}+u_{2}\leq 0,
\]
which amount to $u_{1}\geq 0, u_{2}=0$. As to condition \eqref{EqSubreg2a}, we obtain the equalities
\begin{equation}\label{eq-22}
\begin{array}{l}
 v^{*}_{1}- v^{*}_{3}-v^{*}_{4}=0\\
 v^{*}_{2}- v^{*}_{3}+v^{*}_{4}=0.
\end{array}
\end{equation}
Finally we observe that for any sequence of vectors $u^{(k)}\rightarrow (\bar u_1, 0)$ with $\bar u_1 > 0$ and for any sequence of reals $t^{(k)}\searrow 0$ such that
\[
G(\pb ,\xb )+t^{(k)}\nabla_{x}G(\pb ,\xb )u^{(k)} = t^{(k)}
\left[ \begin{array}{l}
u_{1}^{(k)}\\
u_{2}^{(k)}\\
-u_{1}^{(k)}-u_{2}^{(k)}\\
-u_{1}^{(k)}+u_{2}^{(k)}
\end{array}\right] \in \mathcal{K} \times \mathbb{R}^{2}_{-},
\]
one has $u_2^{(k)}=0$ and therefore
\begin{equation}\label{eq-23}
N_{\mathcal{K} \times \mathbb{R}^{2}_{-}}(G(\pb ,\xb )+t^{k}\nabla_{x} G(\pb ,\xb )u^{(k)})= \{v^{*}\in \mathbb{R}^{4}| v^{*}_{1}=v^{*}_{3}=v^{*}_{4}=0\}.
\end{equation}
Thus, by combining \eqref{eq-22} and \eqref{eq-23} we conclude that $v^{*}_{2}=0$ as well and, due to \eqref{EqSubreg3a}, the corresponding implicit multifunction $S$ is calm at $(\pb ,\xb )$. Since the direction $\tilde u=(1,0)$ fulfills $\nabla_x G(\pb,\xb)\tilde u=(1,0,-1-1)\in D =T_D(G(\pb,\xb))$ and $\nabla_x G(\pb,\xb)\tilde u$ is derivable, we also have $S(p)\not=\emptyset$ and $\dist{\xb,S(p)}\leq \tilde L\norm{p-\pb}$ for some real $\tilde L$ and all $p$ near $\pb$.

Note that in the above example the implicit multifunction $S$ does not possess the Aubin property around $(\pb ,\xb )$, because  we have
\[(0,0)\in S(0,p_2)\ \forall p_2<0,\qquad (0,0)\not\in S(p_1,p_2)\ \forall p_1>0,\ p_2<0.\]
Further, $S$ does not have  the isolated calmness property at $(\pb ,\xb )$ as well because $S(\pb )=\mathbb{R}_{+}\times \{0\}$. Moreover, $M_{\pb}$ fulfills FOSCMS but is neither strongly metrically subregular nor metrically regular.
\hfill$\triangle$
\end{example}

The next example illustrates the situation of Theorem \ref{ThClmConstrSystSO}.
\begin{example}For $p\in\R$ let $S(p)$ be given by the solutions $x\in\R^2$ of the nonlinear inequalities
\[ p-\frac 12 x_1^2+x_2\leq 0,\qquad p-\frac 12 x_1^2-x_2\leq 0.\]
Again this system can be written in the form \eqref{EqParamConstrSyst} with
\[G(p,x)=\left(\begin{array}
  {c}p-\frac 12 x_1^2+x_2 \\p-\frac 12 x_1^2-x_2
\end{array}\right),\qquad D=\R^2_-.\]
Let $\pb=0$, $\xb=0_{\R^2}$. $D$ is a convex polyhedron and we will apply Theorem \ref{ThClmConstrSystSO}. The conditions \eqref{EqSubreg1a}-\eqref{EqSubreg4a}
amount to
\begin{eqnarray*}
  u_2\leq 0,\ -u_2\leq 0,\ v_1^\ast-v_2^\ast=0,\ v_1^\ast\geq 0,\ v_2^\ast\geq 0,\ -(v_1^\ast+v_2^\ast)u_1^2\geq 0,
\end{eqnarray*}
which cannot be fulfilled with $u=(u_1,u_2)\not=(0,0)$, $v^\ast=(v_1^\ast,v_2^\ast)\not=(0,0)$. Hence, $S$ is calm at $(\pb,\xb)$ and since the direction $\tilde u=(1,0)$ fulfills $\nabla_xG(\pb,\xb)\tilde u=(0,0)\in T_D(G(\pb))$ we conclude that $S(p)\not=\emptyset$ and an estimate of the form $\dist{\xb,S(p)}=O(\sqrt{\vert p\vert})$ holds for $p$ near $0$. Indeed, we have $\dist{\xb,S(p)}=\norm{(0,\pm\sqrt{-p})}=\sqrt{-p}$ for all $p<0$. This example demonstrates the antisymmetry of the calmness property. Although the points $x\in S(p)$ near $\xb$ are close to $S(\pb)$ up to the order $O(\norm{p-\pb})$, the point $\xb\in S(\pb)$ is not close to $S(p)$ with this order.\hfill$\triangle$
\end{example}

Since every inclusion $0\in M(p,x)$ can be written equivalently in the form \eqref{EqParamConstrSyst} by
\begin{equation}\label{EqGenMConstrSyst}
  0\in \tilde M(p,x):=(p,x,0)-\Gr M,
\end{equation}
one can combine Corollary \ref{CorPropC}, Theorem \ref{ThGenClm} and Theorem \ref{ThClmConstrSystFO} to obtain pointbased conditions for the calmness of solution mappings of general inclusions.
\begin{corollary}
  Let  $S(p):=\{x\mv 0\in M(p,x)\}$, where $M:\R^l\times \R^n\to\R^m$ is a closed multifunction, and let $\xb\in S(\pb)$.

   {\bf(i)} Assume that there do not exist directions $u\not=0$, $\hat u\not=0$, $q\in T_{\dom H_M}$ and elements $q^\ast\in\Sp_{\R^l}$, $v^\ast\in\Sp_{\R^m}$ such that
  \begin{eqnarray}
    \label{EqClmGenM1}&& 0\in DM(\pb,\xb,0)(0,u),\quad (q^\ast,0)\in D^\ast_{>_p} M((\pb,\xb,0);u)(0),\quad \skalp{q^\ast,q}=-1,\\
    \label{EqClmGenM2}&& 0\in DM_{\pb}(\xb,0)(\hat u),\quad 0\in D^\ast M_{\pb}((\xb,0);(\hat u,0))(v^\ast).
  \end{eqnarray}
  Then $S$ is calm at $(\pb,\xb)$.

  {\bf (ii)} If there do not exist a direction $u\not=0$ and elements $(q^\ast,v^\ast)\not=(0,0)$ such that
\begin{equation} \label{EqClmGenM3} 0\in DM(\pb,\xb,0)(0,u),\quad (q^\ast,0)\in D^\ast M((\pb,\xb,0);(0,u,0))(v^\ast)
\end{equation}
and if there exists a direction $\tilde u$ such that $(0,\tilde u,0)\in T_{\Gr M}(\pb,\xb,0)$ and $(0,\tilde u,0)$ is derivable, then $S$ is calm at $(\pb,\xb)$ and there exist a real $\tilde L$ and a neighborhood $\tilde W$ of $\pb $ such that
\[S(p)\not=\emptyset,\ \dist{\xb,S(p)}\leq\tilde L\norm{p-\pb}\ \forall p\in\tilde W.\]
\end{corollary}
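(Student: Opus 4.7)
The plan is to derive both parts by reduction to results already established in the paper.

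For part (i), I would apply Theorem~\ref{ThGenClm}, which calls for separately verifying the restricted calmness of $M$ with respect to $p$ at $(\pb,\xb,0)$ and the metric subregularity of $M_{\pb}$ at $(\xb,0)$. Restricted calmness would be obtained from Corollary~\ref{CorPropC} by contraposition: any pair $(u,q)\in\Sp_{\R^n}\times(\Sp_{\R^l}\cap T_{\dom H_M}(\pb))$ realising its hypothesis gives $(-q,0)\in D^\ast_{>_p}M((\pb,\xb,0);u)(0)$, and then setting $q^\ast:=-q\in\Sp_{\R^l}$, one has $\skalp{q^\ast,q}=-1$ and a solution of \eqref{EqClmGenM1}, contradicting the assumption. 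Metric subregularity of $M_{\pb}$ would follow from the criterion \eqref{EqEquivCrDirLimCoDer} of Theorem~\ref{ThStabMS}: since $D^\ast M_{\pb}((\xb,0);(\hat u,0))(v^\ast)$ is empty whenever $(\hat u,0)\notin T_{\Gr M_{\pb}}(\xb,0)$, the tangent requirement in \eqref{EqClmGenM2} is automatic, so the non-existence clause reduces precisely to FOSCMS. Theorem~\ref{ThGenClm} then delivers calmness of $S$.

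For part (ii), the idea is to recast the inclusion $0\in M(p,x)$ in the constraint-system form \eqref{EqGenMConstrSyst} and invoke Theorem~\ref{ThClmConstrSystFO}. Define $\tilde G:\R^l\times\R^n\to\R^l\times\R^n\times\R^m$ by $\tilde G(p,x):=(p,x,0)$ and $\tilde D:=\Gr M$, so that $S(p)=\{x\mv \tilde G(p,x)\in\tilde D\}$. Since $\tilde G$ is affine with $\|\tilde G(p,x)-\tilde G(\pb,x)\|=\|p-\pb\|$, all smoothness and Lipschitz hypotheses of Theorem~\ref{ThClmConstrSystFO} (in particular \eqref{EqPropC} with $L'=1$) are automatic. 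The central step is to translate the first-order non-existence condition \eqref{EqSubreg1a}--\eqref{EqSubreg3a} for $(\tilde G,\tilde D)$ back into the coderivative language of $M$: decomposing the full multiplier $v^\ast_{\mathrm{full}}\in\R^l\times\R^n\times\R^m$ as $(q^\ast,u^\ast,w^\ast)$, the transpose equation $\nabla_x\tilde G(\pb,\xb)^T v^\ast_{\mathrm{full}}=0$ forces $u^\ast=0$; the directional normality condition then reads $(q^\ast,0,w^\ast)\in N_{\Gr M}((\pb,\xb,0);(0,u,0))$, which, upon setting $v^\ast:=-w^\ast$, is precisely $(q^\ast,0)\in D^\ast M((\pb,\xb,0);(0,u,0))(v^\ast)$, while $v^\ast_{\mathrm{full}}\neq 0$ is equivalent to $(q^\ast,v^\ast)\neq(0,0)$ and the tangent condition becomes $0\in DM(\pb,\xb,0)(0,u)$. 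This is exactly \eqref{EqClmGenM3}, and calmness of $S$ follows. The derivable tangent $(0,\tilde u,0)\in T_{\Gr M}(\pb,\xb,0)$ in the hypothesis supplies (after normalisation to the unit sphere) the direction required by the second, existential part of Theorem~\ref{ThClmConstrSystFO}, producing the neighbourhood $\tilde W$, the non-emptiness of $S(p)$, and the linear estimate $\dist{\xb,S(p)}\leq\tilde L\|p-\pb\|$.

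The main technical point to handle with care is the bookkeeping in the translation just described: one must verify that the $\R^n$-component of the full multiplier is forced to vanish, that the $\R^l$-component matches $q^\ast$ directly, and that the $\R^m$-component yields $v^\ast$ only after the sign flip inherent in the convention $(x^\ast,-y^\ast)\in N_{\Gr M}$ underlying the coderivative. Once this correspondence is in place, both parts reduce to direct applications of Theorem~\ref{ThGenClm} and Theorem~\ref{ThClmConstrSystFO}, respectively, via the auxiliary Corollary~\ref{CorPropC} and Theorem~\ref{ThStabMS}.
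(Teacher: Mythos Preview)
Your proposal is correct and follows essentially the same route as the paper's own proof: part~(i) via Corollary~\ref{CorPropC} plus FOSCMS feeding into Theorem~\ref{ThGenClm}, and part~(ii) via the reformulation \eqref{EqGenMConstrSyst} and Theorem~\ref{ThClmConstrSystFO}. Your explicit bookkeeping for the translation in part~(ii)---in particular the observation that $\nabla_x\tilde G(\pb,\xb)^T v^\ast_{\mathrm{full}}=u^\ast$ forces the middle block to vanish and that the sign flip on the $\R^m$-component matches the coderivative convention---is exactly the verification the paper leaves implicit in the phrase ``amount to \eqref{EqClmGenM3}''.
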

\begin{proof}
  By Corollary \ref{CorPropC} condition \eqref{EqClmGenM1} ensures that $M$ has the restricted calmness property with respect to $p$  at $(\pb,\xb,0)$. Further, $M_{\pb}$ is metrically subregular at $(\xb,0)$ due to \eqref{EqClmGenM2} and FOSCMS \eqref{EqEquivCrDirLimCoDer}.  Hence, calmness of $S$ follows from Theorem \ref{ThGenClm}. The second statement follows from Theorem \ref{ThClmConstrSystFO} together with the observation that conditions \eqref{EqSubreg1a}-\eqref{EqSubreg3a} applied to $\tilde M$ given by \eqref{EqGenMConstrSyst} amount to \eqref{EqClmGenM3}.
\end{proof}

\section{Aubin property of implicit multifunctions}
The aim of this section is to investigate the Aubin property of $S$ given by \eqref{eq-111} with a closed-graph mapping $M:\R^l\times\R^n\rightrightarrows \R^m$. We start with the following proposition.
\begin{proposition}\label{PropDirNormalCone}
  Let $\M:\R^s\rightrightarrows\R^d$ be a multifunction with closed graph. Given $(\xb,\yb)\in\Gr \M$ and a direction $u\in\R^s$, assume that $\M$ is metrically subregular in direction $u$ at $(\xb,\yb)$ with modulus $\kappa$. Then
  \begin{equation}\label{EqTangDir}
    u\in T_{\M^{-1}(\yb)}(\xb)\quad\Leftrightarrow\quad 0\in D\M(\xb,\yb)(u)
  \end{equation}
  and
  \begin{eqnarray}
  \label{EqInclDirNormal}N_{\M^{-1}(\yb)}(\xb;u)&\subset& \{x^\ast\mv \exists y^\ast\in
  \kappa\norm{x^\ast}\B_{\R^d}:\ (x^\ast,y^\ast)\in N_{\Gr \M}((\xb,\yb);(u,0))\}\\
  \nonumber&\blue{\subset}& {\rm rge} ~D^\ast \M((\xb,\yb);(u,0)).
  \end{eqnarray}
\end{proposition}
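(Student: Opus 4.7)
The plan is to split the argument into the tangent-cone equivalence \eqref{EqTangDir}, and then the normal-cone inclusion \eqref{EqInclDirNormal} (the terminal inclusion into ${\rm rge}\, D^\ast\M((\xb,\yb);(u,0))$ is immediate from definition \eqref{eq-150}). For \eqref{EqTangDir}, the ``$\Rightarrow$'' direction needs no subregularity: if $u \in T_{\M^{-1}(\yb)}(\xb)$, witness sequences $t_k \searrow 0$, $u_k \to u$ with $\xb+t_ku_k\in\M^{-1}(\yb)$ give $(\xb+t_ku_k,\yb)\in\Gr\M$, so $(u,0)\in T_{\Gr\M}(\xb,\yb)$, i.e., $0 \in D\M(\xb,\yb)(u)$. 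Conversely, starting from $t_k\searrow 0$ and $(u_k,v_k)\to(u,0)$ with $\yb+t_kv_k\in\M(\xb+t_ku_k)$, directional subregularity \eqref{EqDirSubReg} (valid eventually since $u_k\in\B(u,\delta)$ and $t_k\in[0,\delta]$ for large $k$) yields $\tilde x_k\in\M^{-1}(\yb)$ with $\norm{\tilde x_k-\xb-t_ku_k}\leq\kappa t_k\norm{v_k}$, whence $(\tilde x_k-\xb)/t_k\to u$ and $u\in T_{\M^{-1}(\yb)}(\xb)$.

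For \eqref{EqInclDirNormal}, fix $x^\ast\in N_{\M^{-1}(\yb)}(\xb;u)$ and unpack the definition of the directional limiting normal cone to obtain sequences $t_k\searrow 0$, $u_k\to u$, $x_k^\ast\to x^\ast$ with $x_k:=\xb+t_ku_k\in\M^{-1}(\yb)$ and $x_k^\ast\in\hat N_{\M^{-1}(\yb)}(x_k)$. The core task is, for each $k$, to build a partner $y_k^\ast$ with $\norm{y_k^\ast}\leq\kappa\norm{x_k^\ast}+o(1)$ so that a slightly perturbed pair $(\tilde x_k^\ast,y_k^\ast)$ lies in $N_{\Gr\M}$ at a point close to $(x_k,\yb)$; a diagonal extraction plus a final limit in $k$ will then close the argument. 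The lifting step runs as follows: points $x$ close to $x_k$ still lie on rays $\xb+tu'$ with $(t,u')$ in the validity domain of the directional subregularity estimate, so that estimate upgrades to the usual metric subregularity inequality at $(x_k,\yb)$ with modulus $\kappa$. Consequently, for $(x,y)\in\Gr\M$ near $(x_k,\yb)$ one locates $\tilde x\in\M^{-1}(\yb)$ with $\norm{x-\tilde x}\leq\kappa\norm{y-\yb}$. Combining this with the regular-normal estimate $\skalp{x_k^\ast,\tilde x-x_k}\leq\epsilon\norm{\tilde x-x_k}$ and the triangle inequality gives
\[
\skalp{x_k^\ast,x-x_k}-\kappa\norm{x_k^\ast}\norm{y-\yb}\leq\epsilon(1+\kappa)\bigl(\norm{x-x_k}+\norm{y-\yb}\bigr),
\]
which exhibits $(x_k,\yb)$ as a local minimizer on $\Gr\M$ of the convex Lipschitz function $\tilde\phi_k(x,y):=-\skalp{x_k^\ast,x}+(\kappa\norm{x_k^\ast}+\epsilon_k)\norm{y-\yb}+\epsilon_k\norm{x-x_k}$ for a suitable $\epsilon_k\searrow 0$. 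Fermat's rule together with the limiting-subdifferential sum rule (applicable because $\tilde\phi_k$ is Lipschitz) produces $(\tilde x_k^\ast,y_k^\ast)\in N_{\Gr\M}(x_k,\yb)$ with $\norm{\tilde x_k^\ast-x_k^\ast}\leq\epsilon_k$ and $\norm{y_k^\ast}\leq\kappa\norm{x_k^\ast}+\epsilon_k$. Unpacking each $N_{\Gr\M}(x_k,\yb)$ back into regular normals at nearby $(x_k',y_k')$ via a diagonal extraction, then passing to the limit in $k$ using boundedness of $(y_k^\ast)$, yields $(x^\ast,y^\ast)\in N_{\Gr\M}((\xb,\yb);(u,0))$ with $\norm{y^\ast}\leq\kappa\norm{x^\ast}$; the final containment into ${\rm rge}\, D^\ast\M((\xb,\yb);(u,0))$ is then just the definition \eqref{eq-150} of the directional coderivative.

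The main obstacle is the lifting step. Because directional subregularity is anchored at $\xb$ rather than at the moving base points $x_k$, I must verify that the usable neighborhood on which the subregularity estimate applies is large enough — in the sense that any $(x,y)\in\Gr\M$ close enough to $(x_k,\yb)$ can be reached by a ray $\xb+tu'$ with $u'\in\B(u,\delta)$ and $t\in[0,\delta]$ — and that the modulus $\kappa$ survives unchanged when recentered at $(x_k,\yb)$. Once this uniformity is secured, the convex Lipschitz structure of $\tilde\phi_k$ reduces the normal-cone extraction to routine subdifferential calculus.
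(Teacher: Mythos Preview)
Your proposal is correct and follows essentially the same route as the paper: the tangent-cone equivalence is handled identically, and for the normal-cone inclusion the paper also reduces to showing that $(x_k,\yb)$ locally minimizes a convex Lipschitz function on $\Gr\M$, then applies the optimality conditions of \cite[Theorem~8.15]{RoWe98} followed by a diagonal passage through regular normals. The one point to keep explicit is that, since $\kappa$ is the \emph{infimum} of valid moduli, the lifting step only yields $\norm{x-\tilde x}\leq(\kappa+\epsilon_k)\norm{y-\yb}$ for some $\epsilon_k\searrow 0$ (the paper writes $\kappa+\tfrac1k$); your $\epsilon_k$ in $\tilde\phi_k$ already absorbs this slack, so nothing changes, and your worry about the modulus ``surviving unchanged'' is resolved precisely by letting it drift by $\epsilon_k$ and recovering $\kappa$ in the limit.
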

\begin{proof}In order to prove the forward implication in \eqref{EqTangDir}, assume $u\in T_{\M^{-1}(\yb)}(\xb)$. Then there are sequences $t_k\searrow 0$ and $u_k\to u$  such that $\xb+t_ku_k\in \M^{-1}(\yb)$ or, equivalently, $\yb=\yb+t_k0\in \M(\xb+t_ku_k)$, implying $(u,0)\in T_{\Gr \M}(\xb,\yb)$ which is the same as
$0\in D\M(\xb,\yb)(u)$. Conversely, if $0\in D\M(\xb,\yb)(u)$, then there are sequences $t_k\searrow 0$ and $(u_k,v_k)\to(u,0)$ such that $\yb+t_kv_k\in\M(\xb+t_ku_k)$.
By virtue of the assumed metric subregularity of $\M$ in direction $u$ 
and choosing $\kappa'>\kappa$ we have
\[\dist{\xb+t_ku_k,\M^{-1}(\yb)}\leq\kappa'\dist{\yb,\M(\xb+t_ku_k)}\leq \kappa' t_k\norm{v_k}\]
for all $k$ sufficiently large and therefore we can find a sequence $x_k\in \M^{-1}(\yb)$ verifying $\norm{x_k-(\xb+t_ku_k)}\leq \kappa' t_k\norm{v_k}$. Thus
\[\lim_{k\to\infty}\norm{\frac{x_k-\xb}{t_k}-u}\leq \lim_{k\to\infty}\norm{\frac{x_k-\xb}{t_k}-u_k}+\norm{u_k-u}\leq \lim_{k\to\infty}\kappa'\norm{v_k}+\norm{u_k-u}=0\]
and we conclude $u\in T_{\M^{-1}(\yb)}(\xb)$. Hence the relation \eqref{EqTangDir} is shown.

To prove  inclusion \eqref{EqInclDirNormal}, consider $x^\ast\in N_{\M^{-1}(\yb)}(\xb;u)$. Then there are sequences $t_k\searrow 0$, $u_k\to u$ and $x_k^\ast\to x^\ast$ such that $x_k^\ast\in\widehat N_{\M^{-1}(\yb)}(x_k)$, where $x_k:=\xb+t_ku_k$. Hence, for each $k$ we can find some radius $r_k>0$ such that
\[\skalp{x_k^\ast,x-x_k}\leq\frac 1k\norm{x-x_k}\ \forall x\in \M^{-1}(\yb)\cap \B(x_k,r_k).\]
By decreasing  the radii $r_k$ if necessary  we can assume $r_k/t_k\leq 1/k$. Further, by the supposed metric subregularity of $\M$ in direction $u$ with modulus $\kappa$ and by passing to a subsequence if necessary we have
\[\dist{x,\M^{-1}(\yb)}\leq (\kappa+\frac 1k) \dist{\yb,\M(x)}\ \forall x\in \B(x_k,r_k)\ \forall k.\]
Next fix $k$, consider $x\in \B(x_k,r_k/2)$ and let $\tilde x$ denote the projection of $x$ onto $\M^{-1}(\yb)$. Then $\norm{\tilde x-x_k}\leq \norm{x-\tilde x}+\norm{x-x_k}\leq 2\norm{x-x_k}\leq r_k$ and $\norm{x-\tilde x} = \dist{x,\mathcal{M}^{-1}(\bar{y})}\leq (\kappa+\frac 1k) \dist{\yb,\M(x)}$ and therefore
\begin{eqnarray*}\skalp{x_k^\ast,x-x_k}&=&\skalp{x_k^\ast,\tilde x-x_k}+\skalp{x_k^\ast,x-\tilde x}\leq \frac 1k\norm{\tilde x-x_k}+\norm{x_k^\ast}\norm{x-\tilde x}\\
&\leq&
\frac 2k\norm{x-x_k}+(\kappa+\frac 1k)\norm{x_k^\ast}\inf_{y\in \M(x)}\norm{y-\yb}.
\end{eqnarray*}
Since $x\in \B(x_k,r_k/2)$ was arbitrary, we conclude
\[(\kappa+\frac 1k)\norm{x_k^\ast}\norm{y-\yb}-\skalp{x_k^\ast,x-x_k}+\frac 2k\norm{x-x_k}\geq 0\ \forall x\in \B(x_k,r_k/2)\ \forall (x,y)\in\Gr M.\]
Taking into account that $x_k\in\M^{-1}(\yb)$ and therefore $\yb\in \M(x_k)$, we see that $(x_k,\yb)$ is a local minimizer  for the problem
\[\min_{(x,y)\in\Gr\M} (\kappa+\frac 1k)\norm{x_k^\ast}\norm{y-\yb}-\skalp{x_k^\ast,x-x_k}+\frac 2k\norm{x-x_k}.\]
The respective optimality conditions \cite[Theorem 8.15]{RoWe98} imply the existence of some $y_k^\ast\in\B_{\R^d}$ and some $\eta_k^\ast\in\B_{\R^s}$ such that
\[0\in\Big(-x_k^\ast+\frac 2k\eta_k^\ast,(\kappa+\frac 1k)\norm{x_k^\ast}y_k^\ast\Big)+N_{\Gr \M}(x_k,\yb)\]
 and by the definition of the limiting normal cone to $\Gr\M$ at $(x_{k}, \bar{y})$ we can find elements
$(\tilde x_k,\tilde y_k)\in\Gr\M$ and $(\tilde x_k^\ast,\tilde y_k^\ast)\in\widehat N_{\Gr \M}(\tilde x_k,\tilde y_k)$ such that
\[\norm{(\tilde x_k,\tilde y_k)-(x_k,\yb)}\leq \frac{t_k}k \quad\mbox{and}\quad  \norm{(-x_k^\ast+\frac 2k\eta_k^\ast,(\kappa+\frac 1k)\norm{x_k^\ast}y_k^\ast)+(\tilde x_k^\ast,\tilde y_k^\ast)}\leq \frac 1k.\]
We infer  $\norm{\tilde y_k^\ast}\leq (\kappa+\frac 1k)\norm{x_k^\ast}+\frac 1k$ and therefore, by passing to a subsequence if necessary, we can assume that $\tilde y_k^\ast$ converges to some $y^\ast\in\R^d$. Since $\lim_{k\to\infty}\tilde x_k^\ast=\lim_{k\to\infty}x_k^\ast= x^\ast$ and
\[\lim_{k\to\infty}\frac{(\tilde x_k,\tilde y_k)-(\xb,\yb)}{t_k}=\lim_{k\to\infty}\left(\frac{(\tilde x_k,\tilde y_k)-(x_k,\yb)}{t_k}+\lim_{k\to\infty}\frac{( x_k,\yb)-(\xb,\yb)}{t_k}\right)=(u,0),\]
we conclude $(x^\ast,y^\ast)\in N_{\Gr \M}((\xb,\yb);(u,0))$ and $\norm{y^\ast}\leq \kappa\norm{x^\ast}$ and the first inclusion in \eqref{EqInclDirNormal} is shown. The second one is straightforward.
\end{proof}

By combining \eqref{EqTangDir} and Lemma \ref{LemBasicPropDirMetr}(iii) we obtain the following corollary.
\begin{corollary}\label{CorTanCone}
  Assume that the multifunction $\M:\R^s\rightrightarrows \R^d$ is metrically subregular at $(\xb,\yb)\in\Gr \M$. Then
  \[T_{\M^{-1}(\yb)}(\xb)=\{u\mv 0\in D\M(\xb,\yb)(u)\}.\]
\end{corollary}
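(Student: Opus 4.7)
The proof will consist of applying the directional equivalence \eqref{EqTangDir} established in Proposition \ref{PropDirNormalCone} in every direction $u$, after first upgrading the assumed (ordinary) metric subregularity of $\M$ at $(\xb,\yb)$ to directional metric subregularity in every nonzero direction. This second ingredient is precisely part (ii) of Lemma \ref{LemBasicPropDirMetr}, which states that metric subregularity of $\M$ at $(\xb,\yb)$ is equivalent to metric subregularity of $\M$ in every direction $u\neq 0$ at $(\xb,\yb)$.

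First I would dispose of the trivial direction $u=0$: the origin belongs to any tangent cone, so $0\in T_{\M^{-1}(\yb)}(\xb)$, and likewise $(0,0)\in T_{\Gr\M}(\xb,\yb)$ yields $0\in D\M(\xb,\yb)(0)$. Hence both sides of the claimed equality contain $0$.

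For an arbitrary direction $0\neq u\in\R^s$, Lemma \ref{LemBasicPropDirMetr}(ii) applied to $\M$ at $(\xb,\yb)$ yields that $\M$ is metrically subregular in direction $u$ at $(\xb,\yb)$ with some modulus $\kappa>0$. Under this hypothesis the equivalence \eqref{EqTangDir} of Proposition \ref{PropDirNormalCone} is at our disposal, and it reads precisely
\[u\in T_{\M^{-1}(\yb)}(\xb)\quad\Leftrightarrow\quad 0\in D\M(\xb,\yb)(u).\]
Since $u\neq 0$ was arbitrary, combining this with the observation on $u=0$ gives the set equality claimed in the corollary.

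There is really no substantive obstacle: the two implications in \eqref{EqTangDir} have already been carried out in the proof of Proposition \ref{PropDirNormalCone} (the forward one requires no regularity hypothesis, while the reverse one is the place where directional metric subregularity is used to extract a sequence $x_k\in\M^{-1}(\yb)$ witnessing $u\in T_{\M^{-1}(\yb)}(\xb)$), and Lemma \ref{LemBasicPropDirMetr}(ii) is exactly the conversion of the corollary's global hypothesis into the directional hypothesis needed for \eqref{EqTangDir}. The only thing to be careful about is treating $u=0$ separately, since the directional notion is formally only invoked for $u\neq 0$.
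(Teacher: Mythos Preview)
Your proof is correct and follows essentially the same approach as the paper: invoke Lemma \ref{LemBasicPropDirMetr} to pass from (global) metric subregularity to metric subregularity in every direction $u\neq 0$, and then apply the equivalence \eqref{EqTangDir} from Proposition \ref{PropDirNormalCone}. The paper's one-line justification actually cites Lemma \ref{LemBasicPropDirMetr}(iii), but your reference to part (ii) is the appropriate one for this corollary, since the hypothesis is metric subregularity rather than directional metric regularity.
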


By combining the Mordukhovich criterion for the Aubin property of $S$ with the definition of the directional limiting coderivative and by invoking Proposition \ref{PropDirNormalCone} and Corollary \ref{CorTanCone}, we arrive at the next statement.

\begin{proposition}
  \label{PropAubinImplMultFunc}Assume that the condition
  \begin{equation}\label{EqAubPropImplMultiFunc1}(q^\ast,0)\in \{(q,u)\mv 0\in DM(\pb,\xb,0)(q,u)\}^\circ\ \Rightarrow\ q^\ast=0\end{equation}
  holds, assume that $M$ is metrically subregular at $(\pb,\xb,0)$ and assume that there do not exist vectors $(0,0)\not=(q,u)\in\R^l\times \R^n$, $(q^\ast,v^\ast)\in\R^l\times\R^m$ with $q^\ast\not=0$ such that
\begin{equation}
  \label{EqAubPropImplMultiFunc2}
  0\in DM(\pb,\xb,0)(q,u),\quad (q^\ast,0)\in  D^\ast M((\pb,\xb,0);(q,u,0))(v^\ast).
\end{equation}
Then $S$ has the Aubin property around $(\pb,\xb)$.
\end{proposition}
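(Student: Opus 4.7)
My plan is to invoke the (standard) Mordukhovich criterion, which reduces the Aubin property of $S$ around $(\pb,\xb)$ to the implication $(q^{\ast},0)\in N_{\Gr S}(\pb,\xb)\Rightarrow q^{\ast}=0$. Thus I would fix an arbitrary $(q^{\ast},0)\in N_{\Gr S}(\pb,\xb)$ and show that $q^{\ast}=0$ by exploiting the key identification $\Gr S=M^{-1}(0)$, treating $M$ as a multifunction from $\R^l\times\R^n$ into $\R^m$ with base point $((\pb,\xb),0)$.

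By the definition of the limiting normal cone, there exist sequences $(p_k,x_k)\to(\pb,\xb)$ in $\Gr S$ and $(q_k^{\ast},x_k^{\ast})\to(q^{\ast},0)$ with $(q_k^{\ast},x_k^{\ast})\in\widehat N_{\Gr S}(p_k,x_k)$. I would split into two cases. In the degenerate case when $(p_k,x_k)=(\pb,\xb)$ for all large $k$, one has $(q^{\ast},0)\in\widehat N_{\Gr S}(\pb,\xb)=T_{\Gr S}(\pb,\xb)^{\circ}$; since $M$ is metrically subregular at $(\pb,\xb,0)$, Corollary \ref{CorTanCone} gives $T_{\Gr S}(\pb,\xb)=\{(q,u)\mv 0\in DM(\pb,\xb,0)(q,u)\}$, so $q^{\ast}=0$ follows directly from hypothesis \eqref{EqAubPropImplMultiFunc1}.

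In the nondegenerate case I would set $t_k:=\norm{(p_k-\pb,x_k-\xb)}>0$ and, after passing to a subsequence, extract a limiting direction $(q,u):=\lim_{k\to\infty}t_k^{-1}(p_k-\pb,x_k-\xb)\in\Sp_{\R^l\times\R^n}$. By construction $(q^{\ast},0)$ belongs to the directional limiting normal cone $N_{\Gr S}((\pb,\xb);(q,u))=N_{M^{-1}(0)}((\pb,\xb);(q,u))$. Here comes the main step: by Lemma \ref{LemBasicPropDirMetr}(ii) the assumed metric subregularity of $M$ at $(\pb,\xb,0)$ is equivalent to directional metric subregularity in every nonzero direction, in particular in $(q,u)$, so Proposition \ref{PropDirNormalCone} applies and yields some $v^{\ast}\in\R^m$ with $(q^{\ast},0)\in D^{\ast}M((\pb,\xb,0);(q,u,0))(v^{\ast})$. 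Moreover, the same proposition (or Corollary \ref{CorTanCone}) forces $0\in DM(\pb,\xb,0)(q,u)$, since $(q,u)\in T_{\Gr S}(\pb,\xb)$.

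If $q^{\ast}$ were nonzero, the quadruple $(q,u,q^{\ast},v^{\ast})$ would satisfy \eqref{EqAubPropImplMultiFunc2} with $(q,u)\neq(0,0)$, contradicting the third hypothesis. Hence $q^{\ast}=0$ in either case, and the Mordukhovich criterion delivers the Aubin property of $S$ around $(\pb,\xb)$. The subtle point I expect to watch carefully is the proper bookkeeping of direction/base-point pairs when invoking Proposition \ref{PropDirNormalCone}: one must take $\M=M$, base point $((\pb,\xb),0)$, $u$-direction $(q,u)$, and read off that the $v^{\ast}$-component is absorbed into the $\kappa\norm{(q^{\ast},0)}$-ball and thus no additional normalization of $v^{\ast}$ is required.
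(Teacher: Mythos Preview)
Your proposal is correct and follows essentially the same route as the paper's own proof: both reduce to the Mordukhovich criterion, split into the ``stationary'' case (handled via Corollary~\ref{CorTanCone} and hypothesis~\eqref{EqAubPropImplMultiFunc1}) and the ``moving'' case (handled by normalizing, extracting a unit direction $(q,u)$, and invoking Proposition~\ref{PropDirNormalCone}). The only cosmetic differences are that the paper argues by contraposition while you argue directly, and that you make explicit the appeal to Lemma~\ref{LemBasicPropDirMetr}(ii) to pass from metric subregularity to directional metric subregularity in $(q,u)$, which the paper leaves implicit.
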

\begin{proof}By contraposition. Assume on the contrary that $S$ does not have the Aubin property around $(\pb,\xb)$. Then we can infer from the Mordukhovich criterion \cite{Mo92},\cite[Theorem 9.40]{RoWe98} that $0\not=q^\ast\in D^\ast S(\pb,\xb)(0)$. By the definition of the limiting coderivative there are sequences $(p_k,x_k,q_k^\ast,u_k^\ast)\to(\pb,\xb, q^\ast,0)$ such that $(q_k^\ast,-u_k^\ast)\in \widehat N_{\Gr S}(p_k,x_k)$ for every $k$. Consider first the case that $(p_k,x_k)=(\pb,\xb)$ holds for infinitely many $k$. Then, by passing to a subsequence and using the fact that the regular normal cone $\widehat N_{\Gr S}(\pb,\xb)$ is closed as a polar cone of $T_{\Gr S}(\pb,\xb)$, we obtain $(q^\ast,0)\in \widehat N_{\Gr S}(\pb,\xb)=(T_{\Gr S}(\pb,\xb))^\circ$. Since $M$ is metrically subregular at $(\pb,\xb,0)$ and $\Gr S=M^{-1}(0)$, by Corollary  \ref{CorTanCone} we arrive at $(q^\ast,0)\in \{(q,u)\mv 0\in DM(\pb,\xb,0)(q,u)\}^\circ$ contradicting \eqref{EqAubPropImplMultiFunc1}. Hence, $(p_k,x_k)=(\pb,\xb)$ only holds for finitely many $k$ and so without loss of generality $(p_k,x_k)\not=(\pb,\xb)$ for all $k$. By putting $t_k:=\norm{(p_k-\pb,x_k-\xb)}$ and by passing to a subsequence if necessary we can assume that $(p_k-\pb,x_k-\xb)/t_k$ converges to some $(q,u)$ with $\norm{(q,u)}=1$ and we conclude $(q^\ast,0)\in N_{\Gr S}((\pb,\xb);(q,u))$. Hence $(q,u)\in T_{\Gr S}(\pb,\xb)$ and by Proposition \ref{PropDirNormalCone} we conclude $0\in DM(\pb,\xb,0)(q,u)$ and the existence of some $v^\ast$ such that $(q^\ast,0)\in D^\ast M((\pb,\xb,0);(q,u,0))(v^\ast)$ contradicting \eqref{EqAubPropImplMultiFunc2}.
\end{proof}

We are now in the position to state the main result of this section:
\begin{theorem}
  \label{ThAubinImplMultFunc}Assume that
  \begin{equation}\label{EqAubPropImplMultiFunc1a}\{u\mv 0\in DM(\pb,\xb,0)(q,u)\}\not=\emptyset\quad \mbox{for all }q\in\R^l
  \end{equation}
   holds, assume that $M$ is metrically subregular at $(\pb,\xb,0)$ and  for every $(0,0)\not=(q,u)\in\R^l\times\R^n$ verifying $0\in DM(\pb,\xb,0)(q,u)$ the condition
\begin{equation}
  \label{EqAubPropImplMultiFunc2a}
   (q^\ast,0)\in  D^\ast M((\pb,\xb,0);(q,u,0))(v^\ast)\ \Longrightarrow\ q^\ast=0
\end{equation}
holds. Then $S$ has the Aubin property around $(\pb,\xb)$ and
  \begin{equation}\label{EqGraphDerivF} DS(\pb,\xb)(q)=\{u\mv 0\in DM(\pb,\xb,0)(q,u)\},\; q\in\R^l.
  \end{equation}
\end{theorem}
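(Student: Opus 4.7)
The plan is to reduce the theorem to Proposition \ref{PropAubinImplMultFunc} (for the Aubin property) and Corollary \ref{CorTanCone} (for the graphical derivative formula). Both hypotheses of the former are almost directly supplied by the assumptions of the theorem; the only nontrivial verification is that \eqref{EqAubPropImplMultiFunc1} is implied by the nonemptiness assumption \eqref{EqAubPropImplMultiFunc1a}. The graphical derivative formula then falls out by observing that $\Gr S = M^{-1}(0)$ and invoking the tangent cone formula for metrically subregular mappings.

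First I would verify \eqref{EqAubPropImplMultiFunc1}. Let $(q^\ast,0)$ belong to the polar of the cone
\[C := \{(q,u)\in\R^l\times\R^n \mv 0\in DM(\pb,\xb,0)(q,u)\}.\]
By assumption \eqref{EqAubPropImplMultiFunc1a}, for every $q\in\R^l$ there exists at least one $u\in\R^n$ with $(q,u)\in C$, so $\skalp{q^\ast,q}+\skalp{0,u}=\skalp{q^\ast,q}\leq 0$. Since $q$ was arbitrary, $q^\ast=0$, giving \eqref{EqAubPropImplMultiFunc1}. Next, \eqref{EqAubPropImplMultiFunc2} is the direct contrapositive of assumption \eqref{EqAubPropImplMultiFunc2a}: if $(q,u)\neq(0,0)$ satisfies $0\in DM(\pb,\xb,0)(q,u)$ and $(q^\ast,0)\in D^\ast M((\pb,\xb,0);(q,u,0))(v^\ast)$, then $q^\ast=0$. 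Together with the assumed metric subregularity of $M$ at $(\pb,\xb,0)$, Proposition \ref{PropAubinImplMultFunc} applies and yields the Aubin property of $S$ around $(\pb,\xb)$.

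It remains to derive \eqref{EqGraphDerivF}. Viewing $M$ as a multifunction from $\R^l\times\R^n$ to $\R^m$, we have $\Gr S=M^{-1}(0)$. Since $M$ is metrically subregular at $(\pb,\xb,0)$, Corollary \ref{CorTanCone} produces
\[T_{\Gr S}(\pb,\xb)=T_{M^{-1}(0)}(\pb,\xb)=\{(q,u)\in\R^l\times\R^n\mv 0\in DM(\pb,\xb,0)(q,u)\}.\]
Taking $u$-sections gives $DS(\pb,\xb)(q)=\{u\mv (q,u)\in T_{\Gr S}(\pb,\xb)\}=\{u\mv 0\in DM(\pb,\xb,0)(q,u)\}$, as asserted.

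I do not expect a real obstacle here; the only subtle point is noticing that \eqref{EqAubPropImplMultiFunc1a} is precisely the pointwise surjectivity that, together with the structure of the polar cone, collapses \eqref{EqAubPropImplMultiFunc1} to $q^\ast=0$. The rest is bookkeeping via the already-established Proposition \ref{PropAubinImplMultFunc} and Corollary \ref{CorTanCone}.
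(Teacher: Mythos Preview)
Your proposal is correct and follows essentially the same approach as the paper: reduce to Proposition~\ref{PropAubinImplMultFunc} and Corollary~\ref{CorTanCone}, with the only nontrivial step being the verification that \eqref{EqAubPropImplMultiFunc1a} forces $q^\ast=0$ in \eqref{EqAubPropImplMultiFunc1} via the polar cone argument. You spell out the derivation of \eqref{EqGraphDerivF} from Corollary~\ref{CorTanCone} a bit more explicitly than the paper does, but the substance is identical.
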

\begin{proof}
  In view of Proposition \ref{PropAubinImplMultFunc} and Corollary \ref{CorTanCone} we only have to show that condition \eqref{EqAubPropImplMultiFunc1a} implies condition \eqref{EqAubPropImplMultiFunc1}. In fact, let $(q^\ast,0)\in \{(q,u)\mv 0\in DM(\pb,\xb,0)(q,u)\}^\circ$. Then, for every $q\in\R^l$ we can find some $u_q\in
  \{u\mv 0\in DM(\pb,\xb,0)(q,u)\}$ and therefore $\skalp{q^\ast,q}+\skalp{0,u_q}=\skalp{q^\ast,q}\leq 0$ implying $q^\ast=0$.
\end{proof}

If we replace the requirement that $M$ is metrically subregular at $(\pb,\xb,0)$ by FOSCMS \eqref{EqEquivCrDirLimCoDer}, we obtain the next corollary.

\begin{corollary}\label{CorAubPropImplMultFunc}
  Assume that \eqref{EqAubPropImplMultiFunc1a} holds and assume that for every $(0,0)\not=(q,u)\in\R^l\times\R^n$ verifying $0\in DM(\pb,\xb,0)(q,u)$  the condition
  \begin{equation}
  \label{EqAubPropImplMultiFunc2b}
   (q^\ast,0)\in  D^\ast M((\pb,\xb,0);(q,u,0))(v^\ast)\ \Longrightarrow\ q^\ast=0, v^\ast=0
  \end{equation}
   is fulfilled. Then $S$ has the Aubin property around $(\pb,\xb)$ and \eqref{EqGraphDerivF} holds.
\end{corollary}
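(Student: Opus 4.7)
The plan is to deduce this corollary directly from Theorem \ref{ThAubinImplMultFunc} by showing that the strengthened hypothesis \eqref{EqAubPropImplMultiFunc2b} automatically supplies the metric subregularity of $M$ at $(\pb,\xb,0)$ that the theorem requires. Concretely, I will show that \eqref{EqAubPropImplMultiFunc2b} implies FOSCMS for $M$ at $(\pb,\xb,0)$ in the form \eqref{EqEquivCrDirLimCoDer}, and then invoke Theorem \ref{ThStabMS}(iii)$\Rightarrow$(ii) (or the sentence right after it) to conclude that $M$ is metrically subregular at this reference point.

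For this verification step, fix a nonzero direction $(q,u)\in\R^l\times\R^n$ and suppose $(0,0)\in D^\ast M((\pb,\xb,0);(q,u,0))(v^\ast)$ for some $v^\ast\in\R^m$. If $(q,u,0)\notin T_{\Gr M}(\pb,\xb,0)$ then the coderivative set is empty and the implication is vacuous, so I may assume $(q,u,0)\in T_{\Gr M}(\pb,\xb,0)$, which is the same as $0\in DM(\pb,\xb,0)(q,u)$. Then \eqref{EqAubPropImplMultiFunc2b}, applied with $q^\ast=0$, yields $v^\ast=0$, which is exactly FOSCMS \eqref{EqEquivCrDirLimCoDer} for $M$ at $(\pb,\xb,0)$. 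Hence $M$ is metrically subregular at $(\pb,\xb,0)$.

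Finally, \eqref{EqAubPropImplMultiFunc2b} trivially entails the weaker condition \eqref{EqAubPropImplMultiFunc2a} of Theorem \ref{ThAubinImplMultFunc} (dropping the requirement $v^\ast=0$ in the conclusion), and hypothesis \eqref{EqAubPropImplMultiFunc1a} is assumed directly. Therefore all assumptions of Theorem \ref{ThAubinImplMultFunc} are met, and we obtain both the Aubin property of $S$ around $(\pb,\xb)$ and the graphical derivative formula \eqref{EqGraphDerivF}. There is no real obstacle here; the proof is essentially a one-line reduction to Theorem \ref{ThAubinImplMultFunc} after checking that the stronger condition \eqref{EqAubPropImplMultiFunc2b} absorbs the metric subregularity assumption via Theorem \ref{ThStabMS}.
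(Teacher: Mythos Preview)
Your proof is correct and follows exactly the route indicated by the paper: the sentence preceding the corollary states that it arises by replacing the metric subregularity assumption of Theorem~\ref{ThAubinImplMultFunc} with FOSCMS \eqref{EqEquivCrDirLimCoDer}, and your verification that \eqref{EqAubPropImplMultiFunc2b} implies FOSCMS for $M$ at $(\pb,\xb,0)$ (handling the case $(q,u,0)\notin T_{\Gr M}(\pb,\xb,0)$ by emptiness of the directional coderivative) is precisely the content of that reduction.
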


 We now show that condition \eqref{EqAubPropImplMultiFunc1a} is also necessary in order that the mapping $S$ has the Aubin property.
\begin{proposition}If $S$ has the Aubin property around $(\pb,\xb)$ then \eqref{EqAubPropImplMultiFunc1a} is fulfilled. 
\end{proposition}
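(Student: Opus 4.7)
The plan is to use the defining inequality of the Aubin property to manufacture, for each prescribed direction $q\in\R^l$, a sequence of points in $\Gr S$ close to $(\pb,\xb)$ along $(q,\cdot)$, and then extract from it a tangent direction of the form $(q,u)\in T_{\Gr S}(\pb,\xb)$. Since $\Gr S=\{(p,x)\mv (p,x,0)\in\Gr M\}$, such a tangent will translate into $(q,u,0)\in T_{\Gr M}(\pb,\xb,0)$, which is exactly $0\in DM(\pb,\xb,0)(q,u)$, establishing \eqref{EqAubPropImplMultiFunc1a}.

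More concretely, I would fix an arbitrary $q\in\R^l$ and choose constants $\kappa\geq 0$ and $\varepsilon>0$ guaranteed by the Aubin property of $S$ around $(\pb,\xb)$. I then take any sequence $t_k\searrow 0$, set $p_k:=\pb+t_kq$, and note that $\pb,p_k\in\B(\pb,\varepsilon)$ once $k$ is large, while $\xb\in S(\pb)$. The Aubin inequality applied to $y:=p_k$, $v:=\pb$, $x:=\xb$ (which lies in $S(\pb)$ and is close to $\xb$) yields
\[
\dist{\xb,S(p_k)}\leq \kappa\norm{p_k-\pb}=\kappa t_k\norm{q}.
\]
Hence I can pick $x_k\in S(p_k)$ with $\norm{x_k-\xb}\leq (\kappa+1/k)\,t_k\norm{q}$; in particular $x_k\to\xb$.

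Define $u_k:=(x_k-\xb)/t_k$. The above bound gives $\norm{u_k}\leq (\kappa+1/k)\norm{q}$, so $\{u_k\}$ is bounded and a subsequence converges to some $u\in\R^n$. Since $(p_k,x_k)\in\Gr S$, we have $(\pb+t_kq,\xb+t_ku_k,0)\in\Gr M$, and passing to the limit in the definition of the contingent cone yields $(q,u,0)\in T_{\Gr M}(\pb,\xb,0)$. By Definition~4(i) this is precisely $0\in DM(\pb,\xb,0)(q,u)$, so the set $\{u\mv 0\in DM(\pb,\xb,0)(q,u)\}$ is nonempty, as required.

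There is no serious obstacle: the only nontrivial step is the selection of $x_k$, which is a direct consequence of the Aubin inequality applied with the distinguished point $\xb\in S(\pb)$, plus a standard compactness argument to extract the limit $u$. I would not need any of the coderivative machinery, only Definitions~1(i), 3(i), and 4(i).
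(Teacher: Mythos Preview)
Your proposal is correct and follows essentially the same argument as the paper's own proof: fix $q$, use the Aubin inequality at $\xb\in S(\pb)$ to select $x_k\in S(\pb+t_kq)$ with $\norm{x_k-\xb}\leq Lt_k\norm{q}$, pass to a subsequence so that $u_k=(x_k-\xb)/t_k\to u$, and conclude $(q,u,0)\in T_{\Gr M}(\pb,\xb,0)$. The only cosmetic difference is that you take the slightly more careful bound $(\kappa+1/k)t_k\norm{q}$ when selecting $x_k$, whereas the paper directly uses $Lt_k\norm{q}$ (harmless since $S(p_k)$ is closed).
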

\begin{proof}If $S$ has the Aubin property around $(\pb,\xb)$, then $\{\xb\}\subset S(p)+L\norm{p-\pb}\B_{\R^n}$ holds for all $p$ in some neighborhood of $\pb$. Consider now any direction $q\in\R^l$ and any sequence $t_k\searrow 0$. Then for every $k$ sufficiently large we can find $x_k\in S(\pb+t_kq)$ such that $\norm{x_k-\xb}\leq Lt_k\norm{q}$. By passing to a subsequence we can assume that  the sequence $u_k:=(x_k-\xb)/t_k$ converges to some $u$. Since $0\in M(\pb+t_kq_k,x_k)=M(\pb+t_kq_k,\xb+t_ku_k)$, the inclusion $0\in DM(\pb,\xb,0)(q,u)$ follows and thus $\{u\mv 0\in DM(\pb,\xb,0)(q,u)\}\not=\emptyset$. Because $q$ was chosen arbitrarily, relation \eqref{EqAubPropImplMultiFunc1a} follows.\end{proof}

Now let us compare the criteria of Corollary \ref{CorAubPropImplMultFunc} with the criterion of \cite[Corollary 4.60]{Mo06a}. By taking $f\equiv 0$ and $Q=M$ in \cite[Corollary 4.60]{Mo06a} we obtain that the condition
\begin{equation}
  \label{EqAubPropImplMultiFuncMord}
   (q^\ast,0)\in  D^\ast M(\pb,\xb,0)(v^\ast)\ \Longrightarrow\ q^\ast=0,\ v^\ast=0
\end{equation}
is sufficient for the Aubin property of $S$ around $(\pb,\xb)$. So, instead of the standard coderivative of $M$ used in \eqref{EqAubPropImplMultiFuncMord}, we use  in condition \eqref{EqAubPropImplMultiFunc2b} the directional coderivative of $M$ in certain directions, which is by definition not larger (typically smaller) than the standard coderivative. This indicates that in this way we arrive at substantially less restrictive sufficient conditions ensuring the Aubin property of $S$. By  Example \ref{ExAubin} below, we will strikingly illustrate  that the conditions of Corollary \ref{CorAubPropImplMultFunc} are indeed weaker than \eqref{EqAubPropImplMultiFuncMord}.

Before we present this example, we work out the preceding theory for the case of a class of  variational systems, where
\begin{equation}\label{EqVarSystem}
  M(p,x)=G(p,x)+Q(x)
\end{equation}
with $G:\R^l\times\R^n\to\R^m$ continuously differentiable and $Q:\R^n\rightrightarrows\R^m$ being a closed-graph multifunction. It is well-known that in this case, cf. \cite[Proposition 4A.2]{DoRo14}, at a fixed triple $(\pb,\xb,0)\in\Gr M$ one has
\[DM(\pb,\xb,0)(q,u)=\nabla_p G(\pb,\xb)q+\nabla_xG(\pb,\xb)u+DQ(\xb,\yba)(u),\]
where $\yba:=-G(\pb,\xb)$.

Condition \eqref{EqAubPropImplMultiFunc1a} thus amounts to the requirement that the {\em generalized equation} (GE)
\begin{equation}
  \label{EqLinVarSyst}0\in \nabla_pG(\pb,\xb)q+\nabla_xG(\pb,\xb)u+DQ(\xb,\yba)(u)
\end{equation}
in variable $u$ possesses a solution for all $q\in\R^l$. Further, by virtue of \eqref{EqSumMF_Smooth_LimCoDeriv}, condition \eqref{EqAubPropImplMultiFunc2a} amounts to the implication
\begin{equation}
  \label{EqAubPropImplMultiFunc2aVarSyst}
 \left. \begin{array}{l}
    q^\ast=(\nabla_p G(\pb,\xb))^Tv^\ast\\
    0\in(\nabla_xG(\pb,\xb))^Tv^\ast+D^\ast Q((\xb,\yba);(u,-\nabla_pG(\pb,\xb)q-\nabla_xG(\pb,\xb)u))(v^\ast)
  \end{array}\right\}\Longrightarrow q^\ast=0
\end{equation}
and condition \eqref{EqAubPropImplMultiFunc2b} amounts to a strengthened variant of \eqref{EqAubPropImplMultiFunc2aVarSyst}, where on the right-hand side one has $q^\ast=0$, $u^\ast=0$. In contrast to the criterion from \cite[Corollary 4.61]{Mo06a} this means that, instead of the solutions $v^\ast$ to the standard adjoint GE
\begin{equation}
  \label{EqStandAdjGE} 0\in(\nabla_x G(\pb,\xb))^Tv^\ast+D^\ast Q(\xb,\yba)(v^\ast)
\end{equation}
with the standard limiting coderivative of $Q(\cdot)$, we have now to consider the respective {\em directional adjoint GE} for directions $(q,u)$ solving \eqref{EqLinVarSyst}. By the definition, the respective set of solutions is not larger (typically much smaller) than the set of solutions to \eqref{EqStandAdjGE}.

In the following example we illustrate the efficiency of our technique for the special case when $Q(x)=N_\Gamma(x)$ with $\Gamma\subset\R^n$ being a convex polyhedron. Then, by virtue of \eqref{EqPolyTanCone}, condition \eqref{EqLinVarSyst} amounts to
\begin{equation}
  \label{EqLinVarInequ} 0\in \nabla_pG(\pb,\xb)q+\nabla_xG(\pb,\xb)u+N_{K_\Gamma(\xb,\yba)}(u).
\end{equation}
\begin{example}
  \label{ExAubin} Consider the solution map $S:\R\rightrightarrows\R^2$ of the GE
  \begin{equation}
    \label{EqEx5} 0\in M(p,x)=\left(\begin{array}{c}
      x_1-p\\-x_2+x_2^2
    \end{array}\right)+N_\Gamma(x)
  \end{equation}
  with $\Gamma=\{x\in\R^2\mv \frac 12 x_1\leq x_2\leq -\frac 12 x_1\}$ at $(\pb,\xb)=(0,(0,0))\in \Gr S$. We will now demonstrate that by means of Corollary \ref{CorAubPropImplMultFunc} we can verify the Aubin property of $S$ around $(\pb,\xb)$ whereas  \cite[Corollary 4.60, Corollary 4.61]{Mo06a} are not applicable.
  
  We have $\yba=(0,0)$, $K_\Gamma(\xb,\yba)=\{u\in\R^2\mv \frac 12 u_1\leq u_2\leq -\frac 12u_1\}$ and therefore, concerning the directions $(q,u)$ verifying \eqref{EqLinVarInequ}, one has to consider the following four situations:
  \begin{enumerate}
    \item[(i)]$q\leq 0$, $u_1=q$, $u_2=0$;
    \item[(ii)]$q\leq 0$, $u_1=\frac 43 q$, $u_2=-\frac 23 q$;
    \item[(iii)]$q\leq 0$, $u_1=\frac 43 q$, $u_2=\frac 23 q$;
    \item[(iv)]$q\geq 0$, $u_1=u_2=0$.
  \end{enumerate}
  We observe that \eqref{EqAubPropImplMultiFunc1a} is fulfilled.
  Since we are only interested in nonzero directions $(q,u)$, for our further analysis  we can restrict to the case $q\not=0$.

The faces of the critical cone are exactly the cones ${\cal F}_1:=\{(0,0)\}$, ${\cal F}_2:=\R^+(-1,\frac 12)$, ${\cal F}_3:=\R^+(-1,-\frac 12)$ and the critical cone ${\cal F}_4:=K_\Gamma(\xb,\yba)$ itself.

  In the case (i) one has
  \[-\nabla_pG(\pb,\xb)q-\nabla_xG(\pb,\xb)u=\left(\begin{array}
    {c}0\\0
  \end{array}\right)\]
   and, by virtue of Theorem \ref{ThPolyDirLimNormCone},
  \[ N_{\Gr N_\Gamma}((\xb,\yba);((q,0),(0,0)))=({\cal F}_4-{\cal F}_4)^\circ\times ({\cal F}_4-{\cal F}_4)=\{(0,0)\}\times\R^2,\]
  since the only face of $K_\Gamma(\xb,\yba)$ containing $(q,0)$ with $q<0$ is the critical cone itself. Thus
  \[ D^\ast N_\Gamma((\xb,\yba);((q,0),(0,0)))(v^\ast)=\{(0,0)\}\]
  and the directional adjoint GE attains the form
  \begin{equation}
    \label{EqEx5GE1}\left(\begin{array}
    {c}0\\0
  \end{array}\right)=\left(\begin{array}{c}v_1^\ast\\-v_2^\ast\end{array}\right).
  \end{equation}

  In the case (ii)
  \[-\nabla_pG(\pb,\xb)q-\nabla_xG(\pb,\xb)u=\left(\begin{array}
    {c}-\frac 13 q\\-\frac 23 q
  \end{array}\right),\]
  \[ N_{\Gr N_\Gamma}((\xb,\yba);((\frac 43 q,-\frac 23q),(-\frac 13 q,-\frac 23 q)))=({\cal F}_2-{\cal F}_2)^\circ\times ({\cal F}_2-{\cal F}_2),\]
 \[ D^\ast N_\Gamma((\xb,\yba);((\frac 43 q,-\frac 23q),(-\frac 13 q,-\frac 23 q)))(v^\ast)=
  \begin{cases}{\cal K}_1^\circ&\mbox{if $-v^\ast\in {\cal K}_1$,}\\
  \emptyset&\mbox{otherwise,}\end{cases}\]
  with ${\cal K}_1:={\cal F}_2-{\cal F}_2=\R(-1,\frac 12)$
  and the directional adjoint GE attains the form
  \begin{equation}
    \label{EqEx5GE2}\left(\begin{array}
    {c}0\\0
  \end{array}\right)\in\left(\begin{array}{c}v_1^\ast\\-v_2^\ast\end{array}\right)+\R\left(\begin{array}{c}\frac 12\\ 1\end{array}\right),\  -v^\ast\in {\cal K}_1,
  \end{equation}
  which has the only solution $v^\ast=0$.

   Similarly to the second case, in the case (iii) the directional adjoint GE attains the form
   \begin{equation}
    \label{EqEx5GE3}\left(\begin{array}
    {c}0\\0
  \end{array}\right)\in\left(\begin{array}{c}v_1^\ast\\-v_2^\ast\end{array}\right)+\R\left(\begin{array}{c}\frac 12\\ -1\end{array}\right),\  -v^\ast\in {\cal K}_2:={\cal F}_3-{\cal F}_3=\R\left(\begin{array}{c}-1\\-\frac 12\end{array}\right)
  \end{equation}
  and again the unique solution is $v^\ast=0$.

  Finally, in the case (iv),
  \[-\nabla_pG(\pb,\xb)q-\nabla_xG(\pb,\xb)u=\left(\begin{array}
    {c} q\\0  \end{array}\right),\]
  \[ N_{\Gr N_\Gamma}((\xb,\yba);((0,0),(q,0)))=({\cal F}_1-{\cal F}_1)^\circ\times({\cal F}_1-{\cal F}_1)=\R^2\times\{(0,0)\},\]
  \[ D^\ast N_\Gamma((\xb,\yba);((0,0),(q,0)))(v^\ast)=
  \begin{cases}\R^2&\mbox{if $v^\ast=(0,0)$,}\\
  \emptyset&\mbox{otherwise,}\end{cases}\]
  and the directional adjoint GE attains the form
  \begin{equation}
    \label{EqEx5GE4}\left(\begin{array}
    {c}0\\0
  \end{array}\right)\in\left(\begin{array}{c}v_1^\ast\\-v_2^\ast\end{array}\right)+\R^2,\  v^\ast=(0,0).
  \end{equation}
  In this way we have analyzed all ''suspicious'' pairs of nonzero directions $(q,u)$ given by \eqref{EqLinVarInequ} and concluded that all GEs \eqref{EqEx5GE1}-\eqref{EqEx5GE4} possess only the trivial solution $v^\ast=(0,0)$. Since $q^\ast=-v_1^\ast$, Corollary \ref{CorAubPropImplMultFunc} implies that the solution map of GE \eqref{EqEx5} indeed has the Aubin property around $(\pb,\xb)$.

    Now let us analyze the standard GE \eqref{EqStandAdjGE}, which reads as
  \begin{equation}\label{EqEx5StandardGE}\left(\begin{array}
    {c}0\\0
  \end{array}\right)\in\left(\begin{array}{c}v_1^\ast\\-v_2^\ast\end{array}\right)+D^\ast N_\Gamma(\xb,\yba)(v^\ast)
  \end{equation}
  for our example. Using the representation of  the limiting normal cone $N_{\Gr  N_\Gamma}$ at $(\xb,\yba)$ as stated in Section \ref{SecPrelim}, we obtain
  \[N_{\Gr \widehat N_\Gamma}(\xb,\yba)=\bigcup_{i=1}^9({\cal K}_i^\circ\times {\cal K}_i)\]
  with ${\cal K}_3={\cal F}_4-{\cal F}_1=K_\Gamma(\xb,\yba)$, ${\cal K}_4={\cal F}_4-{\cal F}_2=\{v\in\R^2\mv \frac 12 v_1+v_2\leq 0\}$, ${\cal K}_5={\cal F}_4-{\cal F}_3=\{v\in\R^2\mv \frac 12 v_1-v_2\leq 0\}$, ${\cal K}_6={\cal F}_4-{\cal F}_4=\R^2$,  ${\cal K}_7={\cal F}_2-{\cal F}_1=\R^+(-1,\frac 12)$, ${\cal K}_8={\cal F}_3-{\cal F}_1=\R^+(-1,-\frac 12)$ and ${\cal K}_9={\cal F}_1-{\cal F}_1=\{(0,0)\}$.\\
  We see that for $v^\ast=(-1,2)$ we have $-v^\ast\in{\cal K}_4$ and $-(v_1^\ast,-v_2^\ast)=(1,2)\in {\cal K}_4^\circ\subset D^\ast N_\Gamma(\xb,\yba)(v^\ast)$, verifying that $v^\ast$ is a nontrivial solution of the GE \eqref{EqEx5StandardGE}. Another nontrivial solution of \eqref{EqEx5StandardGE} is provided by $v^\ast=(-1,-2)\in-{\cal K}_5$. This implies that we cannot apply \cite[Corollary 4.60, Corollary 4.61]{Mo06a} to detect the Aubin property of the solution map $S$.\hfill$\triangle$
\end{example}

\section{Conclusion}

In both main sections of the paper (i.e., Sections 3 and 4) we   use as basic tool the directional limiting coderivatives. The purpose for their usage, however, is different. Whereas in Section 3 they are employed in verifying the calmness of $M_{\pb }$ and in this role they could possibly be substituted by another calmness criterion, in Section 4 they help us to capture the behavior of $M$ along relevant directions and in this role they cannot be substituted by any of the currently available generalized derivatives. This ability of directional limiting coderivatives could possibly be utilized also in analysis of other stability properties of $S$ (than the calmness and the  Aubin property). In particular, under the assumptions of Theorem \ref{ThAubinImplMultFunc} the mapping $S$ has a single-valued Lipschitz localization around $(\bar{p},\bar{x})$ whenever we ensure the single-valuedness of $S$ close to $(\bar{p},\bar{x})$. This may be done, e.g., by standard monotonicity assumptions imposed on $M_{p}(\cdot)$ but we believe that a suitable additional condition could be formulated directly in terms of graphical derivatives and directional limiting coderivatives of $M$ at $(\bar{p},\bar{x}, 0)$. This question we plan to tackle in our future research. The application potential of the directional limiting coderivative is further increased by the formula developed in Theorem \ref{ThPolyDirLimNormCone},  which  enables us an efficient computation of this object in the case of polyhedral constraints.

\section*{Acknowledgements}
The authors would like to express their gratitude to both reviewers for their numerous important suggestions. The research of the first author was supported by the Austrian Science Fund (FWF) under grant P26132-N25. The research of the second author was supported by the Grant Agency of the Czech Republic, project 15-00735S and the Australian Research Council, project  DP160100854.

\end{document}